\theoremstyle{plain}
\newtheorem{theorem}{Theorem}[section]
\newtheorem{corollary}[theorem]{Corollary}
\newtheorem{lemma}[theorem]{Lemma}
\theoremstyle{definition}
\newtheorem{definition}[theorem]{Definition}
\theoremstyle{remark}
\newtheorem{remark} [theorem]{Remark}
\renewcommand{\L}{\mathcal{L}}
\newcommand{\Ii}{\mathcal{I}}
\newcommand{\R}{\mathbb{R}}
\newcommand{\Z}{\mathbb{Z}}
\newcommand{\C}{\mathbb{C}}
\newcommand{\lieg}{\mathfrak{g}}
\newcommand{\liep}{\mathfrak{p}}
\newcommand{\liea}{\mathfrak{a}}
\newcommand{\lieh}{\mathfrak{h}}
\newcommand{\liek}{\mathfrak{k}}
\newcommand{\N}{\mathbb{N}}
\newcommand{\llbracket}{[\![}
\newcommand{\rrbracket}{]\!]}
\title{Generalized k-contact Structures}
\author{Uir\'a N. Matos de Almeida
\thanks{The Author thanks CAPES ans CNPq for the financial support, an anonymous referee who pointed a flaw in my earlier proof and for his general insights and corrections and, lastly, my advisors Thierry Barbot and Carlos Maquera.}}
\begin{document}

\maketitle

\begin{abstract}
With the goal to study and better understand algebraic Anosov actions of $\R^k$, we develop a higher codimensional analogue of the contact distribution on odd dimensional manifolds,  call such structure a generalized $k$-contact structure. We show that there exist an $\R^k$-action associated with this structure, afterwards, we relate this structure with the Weyl chamber actions and a few more general algebraic Anosov actions, proving that such actions admits a compatible generalized $k$-contact structure.

\end{abstract}

\tableofcontents 

\section{Introduction}

Our main interest is the study of Anosov systems, in particular, Anosov actions of $\R^k$. There are two possibilities:  either the subbundle $E^+\oplus E^-$ is integrable or not. In the first case T. Barbot and C. Maquera  \cite{Ba-Maq2} proved that the action is in fact a suspension of a $\Z^k$ action. We are interested in the second case, more precisely, when it is "maximally non integrable", when $k=1$ this condition means that $E^+\oplus E^-$ is a contact distribution, that is, the flow is a contact Anosov flow.

Motivated by our interest in Anosov actions of $\R^k$, we propose a  definition of contact structures associated with distributions of higher co-dimension , which we called generalized $k$-contact structure. This geometric structure comes with a naturally defined $\R^k$-action  which preserves this structure, which we call a contact action, and we are lead to define the notion of contact Anosov actions. On the special case of $k=1$, this geometric structure is actually a contact structure and the notion of contact Anosov action reduces to the usual contact Anosov flow.

It is well known that geodesic flows in general are contact flows, moreover, on a negatively curved manifold, the geodesic flow is Anosov \cite{Anosov}. In 1992 Y. Benoist, P. Foulon and F. Labourie \cite{BFL} proved, under some additional hypothesis, the converse. To be precise, they proved that contact Anosov flows with smooth invariant bundles are geodesic flows on a manifold of constant negative curvature. In other words, contact Anosov flows with smooth invariant bundles can be seen as the action of a one parameter subgroup on a locally homogeneous space $\Gamma\backslash G\slash H$ where $G$ is a rank 1 semisimple Lie group. Such flows are particular cases of what is known as Weyl chamber actions, which turns out to be the main example of Anosov actions of $\R^k$. It is natural, therefore, to ask:
\begin{enumerate}
    \item {} For general $k$, are contact Anosov actions (with smooth invariant bundles) Weyl chamber actions?
    \item{} For general $k$, are Weyl chamber actions contact Anosov actions?
\end{enumerate}

The first question was answered positively in \cite{Almeida} and is related to a standing conjecture by B. kalinin and R. Spatzier \cite{kal} stating the algebricity of Anosov actions of $\R^k$ for $k\geq2$. The goal of this paper is to answer the second question. We relate the generalized $k$-contact structures to some other definitions found in the literature, and show that Weyl chamber actions on semi-simple Lie groups are in fact the contact action of an associated generalized k-contact structure. 

Like the usual contact case, the subbundle $E^+\oplus E^-$ is not integrable, in fact,  our action does not admits a globally transverse submanifold (Lemma \ref{transverse}), which implies that the action can't be a suspension. 

In particular, following the classification of algebraic Anosov actions (of nilpotent Lie groups) given by T. Barbot and C. Maquera \cite{Ba-Maq3}, we obtain that every algebraic Anosov action of $\R^k$ that is not a suspension comes from a generalized $k$-contact structure. Our main theorem is thus:
\begin{theorem}\label{TheoremA}
 Let $(G,K,\Gamma,\mathfrak a)$ be a Weyl chamber action. Then there exists an generalized $k$-contact structure on $\Gamma\backslash G\slash K$ such that the induced contact action is Anosov and it coincides with the Weyl chamber action.
\end{theorem}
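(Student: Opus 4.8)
\emph{Strategy.} The plan is to build the whole structure directly out of the restricted root space decomposition and to make every ingredient left-invariant, so that all the verifications collapse to a single computation in $\lieg$ at the identity. Write $\liem := \operatorname{Lie}(K)$; recall that for a Weyl chamber action one may take $K \subseteq Z_G(\exp\liea)$, so that $\operatorname{Ad}(K)$ acts trivially on $\liea$ and $\liem$ is the centralizer of $\liea$ in a maximal compact subalgebra, and one has the restricted root space decomposition $\lieg = \liem \oplus \liea \oplus \bigoplus_{\lambda\in\Sigma}\lieg_\lambda$, with $\operatorname{Ad}(K)$ preserving each $\lieg_\lambda$. I would fix a basis $H_1,\dots,H_k$ of $\liea$ orthonormal for the Killing form $B$ (which is positive definite on $\liea$), let $X_i$ be the associated left-invariant vector field on $\Gamma\backslash G\slash K$, let $\alpha_i$ be the left-invariant $1$-form equal to $1$ on $X_i$, $0$ on $X_j$ for $j\neq i$, and $0$ on every $\lieg_\lambda$, and set $\mathcal D := \bigoplus_{\lambda\in\Sigma}\lieg_\lambda$. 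Since $\liea \oplus \bigoplus_\lambda \lieg_\lambda$ is an $\operatorname{Ad}(K)$-invariant complement to $\liem$, left translation trivializes $T(\Gamma\backslash G\slash K)$ with this complement as fibre, and all of $\alpha_i$, $X_i$, $\mathcal D$ descend to the quotient.

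\emph{Checking the axioms.} By construction $\mathcal D = \bigcap_i \ker\alpha_i$, $\alpha_i(X_j) = \delta_{ij}$, and $[X_i,X_j]=0$ because $\liea$ is abelian. For left-invariant forms $d\alpha_j(Y,Z) = -\alpha_j([Y,Z])$, so putting $Y = H_i$ and $Z\in\lieg_\lambda$ gives $[H_i,Z] = \lambda(H_i)Z\in\lieg_\lambda$, which $\alpha_j$ annihilates, while $[H_i,Z]=0$ for $Z\in\liea$; hence $\iota_{X_i}d\alpha_j = 0$, i.e. the $X_i$ are Reeb vector fields for the $\alpha_j$. The heart of the matter is non-degeneracy along $\mathcal D$. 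For $Y\in\lieg_\lambda$, $Z\in\lieg_\mu$ with $\lambda,\mu\in\Sigma$ one has $[Y,Z]\in\lieg_{\lambda+\mu}$, whose $\liea$-component vanishes unless $\mu=-\lambda$; so $d\alpha_j$ vanishes on $\lieg_\lambda\times\lieg_\mu$ for $\mu\neq-\lambda$, and for $\mu=-\lambda$, using $\operatorname{ad}$-invariance of $B$ and $B$-orthonormality of the $H_i$, the $H_j$-coefficient of $[Y,Z]$ equals $B([Y,Z],H_j) = \lambda(H_j)B(Y,Z)$, so $d\alpha_j(Y,Z) = -\lambda(H_j)\,B(Y,Z)$. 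Since $B$ restricts to a perfect pairing $\lieg_\lambda\times\lieg_{-\lambda}\to\R$, the $2$-form $\sum_j c_j\, d\alpha_j$ restricts on $\lieg_\lambda\oplus\lieg_{-\lambda}$ to $-\lambda\big(\sum_j c_jH_j\big)\,B$; hence it is symplectic on all of $\mathcal D$ as soon as $\sum_j c_jH_j$ is a regular element of $\liea$, and in every case $\bigcap_j \ker\big(d\alpha_j|_{\mathcal D}\big)=0$. This is exactly the non-degeneracy a generalized $k$-contact structure demands; note in passing that $[\lieg_\lambda,\lieg_{-\lambda}]$ has nonzero $\liea$-component, so $\mathcal D$ is maximally non-integrable, consistent with the introduction.

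\emph{Identifying the action.} The contact action attached to the structure is the flow of the commuting Reeb fields $X_1,\dots,X_k$, i.e. the right action of $A = \exp(\liea)$ on $\Gamma\backslash G\slash K$, which is the Weyl chamber action by definition. For the Anosov property, in the left-invariant trivialization the differential of the time-$X$ map ($X\in\liea$) is $\operatorname{Ad}(\exp(-X)) = e^{-\operatorname{ad}(X)}$: the identity on $\liea$, and multiplication by $e^{-\lambda(X)}$ on $\lieg_\lambda$. Choosing $X$ in the open Weyl chamber makes every $\lambda(X)\neq0$, so with $E^{+} := \bigoplus_{\lambda(X)<0}\lieg_\lambda$ and $E^{-} := \bigoplus_{\lambda(X)>0}\lieg_\lambda$ one gets uniform exponential expansion on $E^{+}$ and contraction on $E^{-}$ — uniform because the bundles are left- and $K$-invariant and $\Gamma\backslash G\slash K$ is compact — with $E^{+}\oplus E^{-} = \mathcal D$ and neutral bundle equal to the orbit direction $\liea$. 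Hence the contact action is Anosov.

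\emph{Main obstacle.} The only genuinely substantive step is the non-degeneracy computation of the second paragraph, together with checking that its conclusion matches verbatim the non-degeneracy clause in the definition of a generalized $k$-contact structure; a secondary but necessary point is the $\operatorname{Ad}(K)$-invariance bookkeeping that lets the left-invariant data descend to $\Gamma\backslash G\slash K$, plus the observation that although a regular element of $\liea$ provides a bona fide contact element, not every element of $\liea$ does — so one must rely on the definition only asking for non-degeneracy at generic linear combinations $\sum_j c_j\,d\alpha_j$.
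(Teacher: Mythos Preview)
Your overall architecture is the same as the paper's --- build left-invariant $1$-forms from $\liea^*$ via the restricted root space decomposition, check that $I=\liea$ lies in $\ker d\alpha_j$ for trivial bracket reasons, and then reduce non-degeneracy on $F$ to a block computation on $\lieg_\lambda\oplus\lieg_{-\lambda}$. Your use of the Killing form to identify the $\liea$-component of $[Y,Z]$ is in fact cleaner than the paper's route through Kammeyer's explicit multiplication table (Theorem~\ref{kamthe}); both approaches reach the same open condition on elements of $\liea^*$.

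There is, however, a genuine gap. You write that ``the definition only ask[s] for non-degeneracy at generic linear combinations $\sum_j c_j\,d\alpha_j$'' and that $\bigcap_j \ker(d\alpha_j|_{\mathcal D})=0$ ``is exactly the non-degeneracy a generalized $k$-contact structure demands''. That is not what Definition~\ref{genkcont} says: condition~(2) requires $\ker(d\alpha_j)=I$ for \emph{every} $j$, i.e.\ each $d\alpha_j$ must individually be symplectic on $F$. Your own computation shows that $d\alpha_j|_{\lieg_\lambda\oplus\lieg_{-\lambda}}=-\lambda(H_j)\,B$, which is non-degenerate precisely when $\lambda(H_j)\neq0$ for all restricted roots $\lambda$, i.e.\ when $H_j$ is regular. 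With an arbitrary $B$-orthonormal basis $H_1,\dots,H_k$ there is no reason any $H_j$ is regular, and then $(M,\alpha,I\oplus F)$ is \emph{not} a generalized $k$-contact structure in the paper's sense.

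The repair is immediate and is exactly what the paper does: the regularity condition is open and dense in $\liea$, so one may choose the basis $H_1,\dots,H_k$ so that every $H_j$ is regular (one can still arrange $B$-orthonormality, since the set of orthonormal frames consisting of regular vectors is open and dense in $O(k)$). With that single change your argument goes through and is a valid --- and somewhat more streamlined --- proof of Theorem~\ref{TheoremA}.
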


As a corollary we obtain:
\begin{theorem}\label{TheoremB}
Let $(G,K,\Gamma,\mathfrak a)$ be an algebraic Anosov action which is not a suspension. Then there exists an generalized $k$-contact structure on $\Gamma\backslash G\slash K$ such that the induced contact action is Anosov and it coincides with the algebraic action.
\end{theorem}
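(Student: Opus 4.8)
The plan is to derive Theorem \ref{TheoremB} as a corollary of Theorem \ref{TheoremA} together with the structure theory of algebraic Anosov actions of $\R^k$ as classified by Barbot--Maquera \cite{Ba-Maq3}. Recall that an algebraic Anosov action is given by a quadruple $(G,K,\Gamma,\mathfrak a)$ where $G$ is a Lie group, $K\subset G$ compact, $\Gamma\subset G$ a cocompact lattice, and $\mathfrak a$ an abelian subalgebra acting on $\Gamma\backslash G\slash K$ with Anosov generators. The first step is to invoke the dichotomy from the introduction: either the subbundle $E^+\oplus E^-$ is integrable, in which case (by \cite{Ba-Maq2}) the action is a suspension of a $\Z^k$-action, or it is non-integrable. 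Since we assume the action is \emph{not} a suspension, we are in the non-integrable case, and Lemma \ref{transverse} confirms that there is no globally transverse submanifold, so this really is the genuinely non-suspension situation.

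Next I would appeal to the classification in \cite{Ba-Maq3}: up to finite covers and the obvious reductions, a non-suspension algebraic Anosov action of $\R^k$ decomposes, along the lines of that paper, into pieces, and the non-suspension factor is of Weyl chamber type — that is, $G$ (or the relevant factor) is semisimple of higher rank, $K$ is a maximal compact, and $\mathfrak a$ sits inside a Cartan subspace as a Weyl chamber direction. Concretely, the work in \cite{Ba-Maq3} shows that once one discards the toral/suspension directions, what remains is precisely a Weyl chamber action $(G',K',\Gamma',\mathfrak a')$ in the sense of the hypothesis of Theorem \ref{TheoremA}. Thus the statement reduces to that theorem.

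With that reduction in hand, I would apply Theorem \ref{TheoremA} to the Weyl chamber factor to obtain a generalized $k$-contact structure on $\Gamma'\backslash G'\slash K'$ whose induced contact action is Anosov and coincides with the Weyl chamber action. The remaining bookkeeping is to check that the generalized $k$-contact structure is compatible with passing back through the finite cover (a generalized $k$-contact structure descends to / lifts along finite coverings since it is defined by tensorial data — a codimension-$2k$ distribution together with the relevant $\R^k$-valued forms — which is local), and that after this identification the induced contact $\R^k$-action is exactly the original algebraic action. This last identification is immediate because Theorem \ref{TheoremA} already guarantees that the contact action \emph{is} the Weyl chamber action, and under the classification isomorphism the Weyl chamber action \emph{is} the given algebraic action.

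The main obstacle I anticipate is not in Theorem \ref{TheoremA} itself (which is assumed) but in making the reduction step airtight: one must be careful that the classification of \cite{Ba-Maq3} genuinely yields, for every non-suspension algebraic Anosov $\R^k$-action, a model of Weyl chamber type rather than some more exotic homogeneous model, and that the passage to finite covers or to a semisimple factor does not destroy the non-suspension hypothesis or the Anosov property of the generators. If \cite{Ba-Maq3} only classifies algebraic Anosov actions of nilpotent Lie groups (as the phrasing in the introduction suggests), then the corollary is really a statement about that restricted class, and the reduction is correspondingly the content of that classification: every such action is, up to suspension, a Weyl chamber action, so the corollary follows directly by citing \cite{Ba-Maq3} and then applying Theorem \ref{TheoremA}.
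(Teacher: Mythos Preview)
Your reduction step has a genuine gap. The Barbot--Maquera classification (\cite{Ba-Maq3}, stated here as Theorem~5.3) does \emph{not} say that a non-suspension algebraic Anosov action is commensurable to a Weyl chamber action; it says such an action is commensurable to a \emph{central extension over a (modified) Weyl chamber action}. These two intermediate constructions (Definitions~\ref{def1} and~\ref{def2}) are not finite covers, and neither is handled by the ``tensorial data descends along finite covers'' remark you make. A modified Weyl chamber action replaces $K$ by a smaller subgroup $K'$ and enlarges $\mathfrak a$ to $\mathfrak a' = \mathfrak a \oplus \mathfrak a^*$, so the resulting space $\Gamma\backslash G/K'$ is a principal torus bundle over $\Gamma\backslash G/K$, not a finite cover; a central extension introduces an additional central factor $H_0$ and again yields a torus bundle.

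The paper closes exactly this gap with two separate lemmas. For central extensions (Lemma~5.5) one shows the torus bundle admits a flat connection, and then Lemma~\ref{PBundle} produces the generalized $(k+l)$-contact structure. For modified Weyl chamber actions (Lemma~\ref{modweyl}) the bundle need \emph{not} admit a flat connection, so one instead constructs a left-invariant connection from the splitting $\mathfrak g = \mathfrak k' \oplus \mathfrak a^* \oplus \mathfrak a \oplus \mathcal S \oplus \mathcal U$, checks by hand that the lifted Reeb vector fields lie in $\ker d\xi_i$ (using the Anosov property to control $[\hat X_j,\cdot]$), and then invokes the weaker Lemma~\ref{anterior} rather than Lemma~\ref{PBundle}. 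Your proposal is correct in spirit --- the result \emph{is} a corollary of Theorem~\ref{TheoremA} plus the classification --- but the ``bookkeeping'' you describe is the actual content of Section~5, and it requires these torus-bundle constructions rather than finite-cover arguments.
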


This paper is organized in the following way:

In section 2, we give our proposed definition of generalized $k$-contact structures, and it's associated contact action. We prove some general results and give some basic examples. In particular, we show how a generalized $k$-contact structure on a given manifold $B$ can induce a generalized $k+l$-contact structure on a principal $\mathbb T^l$-bundle $M\to B$  over $B$ with a flat connection. 

In section 3, we relate the generalized k-contact structure with Anosov $\R^k$  actions to define a contact Anosov action.

In section 4, we illustrate, via an example, and prove that this construction, in fact, work for general Weyl chamber actions, proving Theorem \ref{TheoremA}.

In section 5, we recall the classification of algebraic Anosov actions of nilpotent Lie groups, given by T. Barbot and C. Maquera \cite{Ba-Maq3}, specifying the case where the Lie group is in fact Abelian. We use this classification to show Theorem \ref{TheoremB}


\section{Generalized $k$-contact structures}\label{subsecgenkcont}
\begin{definition}\label{genkcont}
Let $M$ be a smooth manifold (not necessarily compact) of dimension $2n+k$.
	A generalized $k$-contact on $M$ is a collection of smooth $1$-forms $\{\alpha_1,\dots,\alpha_k\}$, which are pointwise linearly independent, and a $C^{\infty}$-splitting $TM = I\oplus F$, 	$\dim I = k$,  such that, for every $1\leq j\leq k$ we have, 
	\begin{enumerate}
		\item {}$F = \bigcap_{i=1}^k\ker\alpha_i$
		\item {}$\ker(d\alpha_j) = I$
	\end{enumerate}
	We denote this structure by $(M,\alpha,TM = I\oplus F)$.
\end{definition}
\begin{remark}\label{volform} 
    Notice that condition (2) is actually equivalent to
    \begin{itemize}
         \item [(3)]($d\alpha_j)_{|_F}$ is non degenerate and $I\subset\ker (d\alpha_j)$
    \end{itemize}
	moreover, as $dim(F)=2n$, the non degeneracy of $(d\alpha_j)_{|_F}$ is equivalent to 
	$$(d\alpha_j^n)_{|_F}\neq 0$$
	it follows that, for every $j = 1,\dots,k$ $$\alpha_1\wedge\cdots\wedge\alpha_k\wedge d\alpha_j^n\enskip\text {is a volume form}.$$
\end{remark}
\begin{lemma}
	The $1$-forms that define a generalized $k$-contact structure have constant rank $2n+1$
	\footnote{Remember that the rank of a differential form $\omega$ of rank $p$ at a point $y$ is the co-dimension of the characteristic space $$\mathcal C(\omega)(y) = \{X\in T_pM\;;\; i_X\omega(y) =0\;\;and\;\; i_Xd\omega(y) = 0\}$$}.
		
\end{lemma}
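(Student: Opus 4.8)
The plan is to compute the characteristic space $\mathcal{C}(\alpha_j)(y)$ explicitly at an arbitrary point $y\in M$ and show it always has dimension $k-1$; since the rank of $\alpha_j$ at $y$ is, by the cited definition, the codimension of this space, it will then be constantly equal to $(2n+k)-(k-1)=2n+1$, independently of $y$ and of $j$.

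First I would unwind condition (2). For the $2$-form $d\alpha_j$ the set $\{X\in T_yM : i_Xd\alpha_j(y)=0\}$ is by definition the radical $\ker(d\alpha_j)_y$, which by hypothesis equals $I_y$. Hence the second of the two conditions defining $\mathcal{C}(\alpha_j)(y)$ — namely $i_Xd\alpha_j(y)=0$ — is equivalent to $X\in I_y$, and therefore
$\mathcal{C}(\alpha_j)(y)=\{X\in I_y : \alpha_j(X)=0\}=I_y\cap\ker(\alpha_j)_y$.

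Next I would show that $(\alpha_j)_y$ restricts to a \emph{nonzero} functional on $I_y$; this is the only non-formal point. By condition (1) we have $F_y\subseteq\ker(\alpha_i)_y$ for every $i$, so each $\alpha_i$ descends to a linear functional $\bar\alpha_i$ on the $k$-dimensional quotient $T_yM/F_y$. The pointwise linear independence of $\alpha_1,\dots,\alpha_k$ in $T_y^{*}M$ forces $\bar\alpha_1,\dots,\bar\alpha_k$ to be linearly independent, hence a basis of $(T_yM/F_y)^{*}$; transporting this along the isomorphism $I_y\cong T_yM/F_y$ supplied by the splitting $TM=I\oplus F$, the restrictions $(\alpha_1)_{|I_y},\dots,(\alpha_k)_{|I_y}$ form a basis of $I_y^{*}$. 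In particular $(\alpha_j)_{|I_y}\neq 0$. (Alternatively, if $(\alpha_j)_{|I_y}$ were zero then $\alpha_j$ would vanish on all of $T_yM=I_y\oplus F_y$, contradicting Remark \ref{volform}, which asserts that $\alpha_1\wedge\cdots\wedge\alpha_k\wedge d\alpha_j^{\,n}$ is a volume form.)

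Finally, a nonzero linear functional on the $k$-dimensional space $I_y$ has kernel of dimension $k-1$, so $\dim\mathcal{C}(\alpha_j)(y)=\dim\big(I_y\cap\ker(\alpha_j)_y\big)=k-1$ for every $y$ and every $j$. Thus the rank of $\alpha_j$ equals $\dim T_yM-(k-1)=(2n+k)-(k-1)=2n+1$, which is constant. I do not anticipate a genuine obstacle here; the points requiring care are the correct reading of condition (2) as an equality of radicals of $2$-forms, and making the quotient/linear-independence step in the middle precise.
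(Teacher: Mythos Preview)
Your proof is correct and in fact more direct than the paper's. The paper argues via the classical exterior-power characterization of rank: from $\ker(d\alpha_j)=I$ one gets $(d\alpha_j)^{n+1}=0$ while $(d\alpha_j)^n\neq 0$, so the rank is either $2n$ or $2n+1$; then the observation that $\alpha_j$ does not vanish on $\ker(d\alpha_j)$ forces the rank to be odd, hence $2n+1$. Your argument bypasses this machinery entirely and computes the characteristic space $\mathcal C(\alpha_j)(y)=I_y\cap\ker(\alpha_j)_y$ straight from the footnoted definition, then reads off its dimension as $k-1$. What your approach buys is self-containment: it uses only the definition of rank actually stated in the paper, without appealing to the (unstated) equivalence with the Darboux-type criterion in terms of powers of $d\alpha_j$. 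The paper's route is slightly shorter for a reader who already has that equivalence in hand.
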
	
\begin{proof}
	As $dim I = k$  we have $dim(F) = 2n$.  As $\ker(d\alpha_j) = I$ and $TM = I\oplus F$ then, $(d\alpha_j^{n+1})_p = 0$ for every point $p\in M$, moreover, as we already remarked,  $(d\alpha_j^n)_p\neq 0$. Thus, $rank(\alpha_j)$ on each point is either $2n$ or $2n+1$
	Finally, as ${\alpha_j}_{|_{\ker d\alpha_j}}\neq 0$ then $\alpha_j$ has odd rank everywhere, that is, $rank(\alpha_j)=2n+1$ at every point.
\end{proof}

\begin{lemma}
	The distribution $F$ in a generalized $k$-contact structure is non integrable.
\end{lemma}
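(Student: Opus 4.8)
The plan is to show that the distribution $F = \bigcap_{i=1}^k \ker\alpha_i$ cannot be integrable by exhibiting two sections of $F$ whose Lie bracket escapes $F$. The mechanism is the standard one from contact geometry, adapted to this higher-codimensional setting: for any $1$-form $\alpha_j$ and vector fields $X, Y$, the Cartan formula gives $d\alpha_j(X,Y) = X\alpha_j(Y) - Y\alpha_j(X) - \alpha_j([X,Y])$. If $X$ and $Y$ are both sections of $F$, then $\alpha_j(X) = \alpha_j(Y) = 0$, so the first two terms vanish and we get $d\alpha_j(X,Y) = -\alpha_j([X,Y])$. Hence if $F$ were integrable, $[X,Y]$ would again be a section of $F$, forcing $d\alpha_j(X,Y) = 0$ for all $X, Y \in \Gamma(F)$, i.e. $(d\alpha_j)_{|_F} \equiv 0$.

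The second step is to observe that this contradicts the non-degeneracy of $(d\alpha_j)_{|_F}$ established in Remark \ref{volform} (condition (3)), equivalently the fact that $(d\alpha_j^n)_{|_F} \neq 0$ with $\dim F = 2n \geq 2$. Here one should note that the argument requires $n \geq 1$; if $n = 0$ then $F$ is the zero distribution and is vacuously integrable, so the statement implicitly assumes $n \geq 1$ (which is the interesting case, since otherwise there is no "contact" content). Concretely, pick a point $p$ and a vector $v \in F_p$ with $v \neq 0$; by non-degeneracy there is $w \in F_p$ with $d\alpha_j(v,w) \neq 0$. Extending $v, w$ to local sections of $F$ (possible since $F$ is a smooth subbundle) gives the desired pair of vector fields witnessing $[X,Y] \notin \Gamma(F)$ near $p$.

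I would phrase the write-up as a proof by contradiction: assume $F$ integrable, take local sections $X, Y$ of $F$, apply the Cartan formula to conclude $\alpha_j([X,Y]) = -d\alpha_j(X,Y)$, note that integrability forces the left side to vanish identically, hence $(d\alpha_j)_{|_F} = 0$, contradicting Remark \ref{volform}. No genuine obstacle is expected here — the only point requiring a moment's care is making sure one invokes the non-degeneracy of $d\alpha_j$ restricted to $F$ rather than on all of $TM$ (it is degenerate on $TM$, with kernel $I$), and implicitly assuming $n\geq 1$ so that $F$ is nonzero. This is a short, clean argument of two or three lines once the Cartan-formula identity is written down.
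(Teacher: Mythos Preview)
Your proposal is correct and follows essentially the same approach as the paper: assume integrability, apply the Cartan formula for $d\alpha_j$ to sections of $F$, and derive $(d\alpha_j)_{|_F}=0$, contradicting the non-degeneracy from Remark~\ref{volform}. The paper's proof is exactly this argument in three lines; your additional remarks about the implicit hypothesis $n\geq 1$ and about extending pointwise vectors to local sections are valid refinements but not needed for the level of detail the paper gives.
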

\begin{proof}
	Suppose that $F$ is integrable. From Frobenius Theorem, this is equivalent to $[F,F]\subset F$. Take two vector fields $Z,W\in\Gamma(M,F)$, in particular, we have $$\alpha_j(Z) = \alpha_j(W)= \alpha_j([Z,W])=0$$ and thus
	$$d\alpha_j(Z,W) = Z(\alpha_j(W)) - W(\alpha_j(Z))-\alpha_j([Z,W])=0$$
	which contradicts the fact that $d\alpha_j$ is non degenerate on $F$.
\end{proof}
\begin{remark}
	The distribution $F$ is maximally non integrable in the following sense: For every vector field $Z$ tangent o $F$ there exists a vector field $W$ also tangent to $F$ such that $[Z,W]$ is not tangent to $F$.
\end{remark}

\begin{lemma}\label{reebfields}
	For each $j$, there is a unique vector field $X_j\in \Gamma(M,I)$ such that $\alpha_i(X_j) = \delta_{ij}$. These vector fields are called Reeb vector fields. Moreover, the Reeb vector fields commute one with each other:
	$$[X_i,X_j]=0$$
\end{lemma}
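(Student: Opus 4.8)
The plan is to treat existence and uniqueness of the $X_j$ first, and then commutativity. For existence/uniqueness I would argue pointwise: fix $p\in M$ and consider the linear map $\Phi_p\colon I_p\to\R^k$, $v\mapsto(\alpha_1(v),\dots,\alpha_k(v))$. Its kernel is $I_p\cap\bigcap_i\ker\alpha_i = I_p\cap F_p$, which is $\{0\}$ by the splitting $T_pM=I_p\oplus F_p$; since $\dim I_p = k$, $\Phi_p$ is an isomorphism, so there is a unique $X_j(p)\in I_p$ with $\alpha_i(X_j(p))=\delta_{ij}$. To see these assemble into a smooth section, pick a local frame of $I$, write $X_j$ in it; the coefficients solve a linear system whose matrix (the $\alpha_i$ evaluated on the frame) is invertible and depends smoothly on $p$, so $X_j$ is smooth by Cramer's rule, hence $X_j\in\Gamma(M,I)$.

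For commutativity the idea is to show $[X_i,X_j]$ lies in $F$ \emph{and} in $I$, and then invoke $I\cap F=\{0\}$. For the first inclusion, apply Cartan's formula to $\alpha_l$:
$$d\alpha_l(X_i,X_j)=X_i\big(\alpha_l(X_j)\big)-X_j\big(\alpha_l(X_i)\big)-\alpha_l([X_i,X_j]).$$
The left side vanishes since $X_i\in I=\ker d\alpha_l$, and the first two terms on the right vanish since $\alpha_l(X_j)=\delta_{lj}$ and $\alpha_l(X_i)=\delta_{li}$ are locally constant; hence $\alpha_l([X_i,X_j])=0$ for every $l$, i.e. $[X_i,X_j]\in\bigcap_l\ker\alpha_l=F$. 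For the second inclusion, use the identity $i_{[X_i,X_j]}=\mathcal{L}_{X_i}\circ i_{X_j}-i_{X_j}\circ\mathcal{L}_{X_i}$ applied to $d\alpha_l$: since $i_{X_j}d\alpha_l=0$ (because $X_j\in\ker d\alpha_l$) and $\mathcal{L}_{X_i}d\alpha_l=d(i_{X_i}d\alpha_l)+i_{X_i}d(d\alpha_l)=0$, we get $i_{[X_i,X_j]}d\alpha_l=0$, so $[X_i,X_j]\in\ker d\alpha_l=I$. Therefore $[X_i,X_j]\in I\cap F=\{0\}$. (One could equally finish from $i_{[X_i,X_j]}d\alpha_l=0$ using non-degeneracy of $d\alpha_l|_F$ together with $[X_i,X_j]\in F$.)

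I do not expect a genuine obstacle here; the argument is essentially formal once conditions (1), (2) and the splitting are used. The only points needing mild care are the smooth dependence of the Reeb fields on the base point and the bookkeeping in the Cartan/interior-product identities. As an alternative to the Cartan computation for the first inclusion, one can note $\mathcal{L}_{X_i}\alpha_l=d(\delta_{il})+i_{X_i}d\alpha_l=0$, so the local flow of $X_i$ preserves each $\alpha_l$ and hence $F$ (and, preserving $d\alpha_l$, also $I$), whence $[X_i,X_j]=\mathcal{L}_{X_i}X_j$ is tangent to $I$; but this route still needs the second step and must be localized when $M$ is non-compact, so the forms-based computation above is cleaner.
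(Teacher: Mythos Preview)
Your proof is correct. The existence/uniqueness argument is essentially the same as the paper's: both amount to observing that the restrictions $\alpha_i|_{I_p}$ form a basis of $I_p^*$ (you phrase this as $\Phi_p$ being an isomorphism, the paper phrases it as taking the dual basis), with smoothness following from smoothness of the $\alpha_i$.

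For commutativity, however, you take a genuinely different and more economical route than the paper. The paper shows $[X_i,X_j]=0$ by proving that $i_{[X_i,X_j]}$ annihilates the volume form $\alpha_1\wedge\cdots\wedge\alpha_k\wedge d\alpha_l^n$; this requires first establishing the auxiliary identities $d(\alpha_{j_1}\wedge\cdots\wedge\alpha_{j_s}\wedge d\alpha_l^n)=0$ and then expanding $[i_{X_i},\mathcal L_{X_j}]$ against the full volume form. Your argument instead splits into two short computations showing $[X_i,X_j]\in F$ and $[X_i,X_j]\in I$ separately, then invokes $I\cap F=\{0\}$ directly. Your approach uses exactly the same basic identities (Cartan's formula and $i_{[X,Y]}=[\mathcal L_X,i_Y]$) but applies them to $\alpha_l$ and $d\alpha_l$ individually rather than to the top-degree form, which avoids the auxiliary closure lemmas and is conceptually cleaner. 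The paper's volume-form route has the minor advantage of packaging everything into a single vanishing, but your decomposition makes the role of the splitting $TM=I\oplus F$ more transparent.
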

\begin{proof}
    For $p \in M$, the linear functionals $\beta_i(p)$ which are the
restrictions of the $\alpha_i(p)$, $1 \leq i \leq k$, to the k-dimensional vector space $I(p)$ satisfy 
$$\bigcap_i\ker(\beta_i(p)) = \{0\}$$

 Hence, they form a basis of the dual space of $I(p)$. We define $X_i(p)$, $1 \leq i \leq k$, as the dual basis. The smoothness of the forms $\alpha_1,\dots,\alpha_k$ implies the smoothness of the vector fields $X_1,\dots,X_k$.

	
	To verify that they commute, we recall that if $\Omega$ is a volume form, then, $i_Z\Omega = 0$ implies that $Z=0$, thus, we 
	must show that, for any $l$,
	$$i_{[X_i,X_j]}\alpha_1\wedge\dots\wedge\alpha_k\wedge d\alpha_l^n = 0.$$
	
	As $\alpha_1\wedge\dots\wedge\alpha_k\wedge d\alpha_l^n$ is a volume form for any $l$ (Remark \ref{volform}), this will imply that  $[X_i,X_j]=0$.
	
	First we notice that for any $i,l$ we have $d\alpha_i\wedge d\alpha_l^n = 0$, and thus, for any $1\leq j\leq k$ we have
	$$d(\alpha_j\wedge d\alpha_l^n) = 0$$
	and more generally, for any $1\leq j_1<\dots <j_s\leq k$ we have
	$$d(\alpha_{j_1}\wedge\dots\wedge \alpha_{j_s}\wedge d\alpha_l^n) = 0$$
	
	Thus, for any $1\leq j\leq k$
	$$(d\circ i_{X_j})\alpha_1\wedge\dots\wedge\alpha_k\wedge d\alpha_l^n = d(\alpha_1\wedge\dots\wedge \alpha_{j-1}\wedge \alpha_{j+1}\wedge\dots\wedge\alpha_k\wedge d\alpha_l^n)= 0$$
	and similarly
	$$(d\circ i_{X_j}\circ i_{X_i})\alpha_1\wedge\dots\wedge\alpha_k\wedge d\alpha_l^n=0$$
	
	Moreover, as $\alpha_1\wedge\dots\wedge\alpha_k\wedge d\alpha_l^n$ is a volume form, we have
	$$d(\alpha_1\wedge\dots\wedge\alpha_k\wedge d\alpha_l^n)=0$$
	
	Thus, using Cartan's formula\footnote{The Cartan's formula is $\L_X = i_X\circ d + d\circ i_X$ where $\L_X$ denotes the Lie derivative. A classical consequence (Kobayashi, S and Nomizu, K. \cite{kobayashi}, Section I.3 Proposition 3.10) of Cartan's formula is the identity	$$i_{[A,B]} = [i_A,\L_B]$$}: 
	\begin{align*}
	i_{[X_i,X_j]}\alpha_1\wedge\dots\wedge\alpha_k\wedge d\alpha_l^n& = [i_{X_i},\L_{X_j}]\alpha_1\wedge\dots\wedge\alpha_k\wedge d\alpha_l^n\\
	& = [i_{X_i},i_{X_j}\circ d + d\circ i_{X_j}]\alpha_1\wedge\dots\wedge\alpha_k\wedge d\alpha_l^n=0
	\end{align*}
	
\end{proof}	

\begin{definition}
For a given generalized $k$-contact structure the induced $\R^k$ action given by the Reeb vector fields will be called a contact action.
\end{definition}

\begin{remark}
	Let $\phi$ be a contact action. Notice that $I$ is precisely $T\phi$, the distribution tangent to the action. Moreover, if the splitting $TM = I\oplus F$ is smooth, then so are the vector fields $X_j$ and therefore so is the action $\phi$.
	
	It is sometimes convenient to denote a generalized $k$-contact structure on $M$ as the $4$-tuple $(M,\alpha,\phi,F)$, where $\alpha = (\alpha_q,\dots,\alpha_k)$, $\phi$ denotes the contact action and $F\leq TM$ is the $\phi$-invariant subbundle where $d\alpha_j$ is non degenerate.
\end{remark}

\begin{lemma}\label{transverse}
    Let $(M,\alpha,\phi,F)$ be a generalized $k$-contact compact manifold. Then the contact action does not admits a global transverse section, that is, a compact embedded submanifold $N$ of codimension $k$ which is everywhere transverse to the orbits of the action $\phi$.
\end{lemma}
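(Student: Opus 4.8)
The plan is to argue by contradiction, exploiting the volume form $\Omega := \alpha_1\wedge\cdots\wedge\alpha_k\wedge d\alpha_1^n$ from Remark \ref{volform} together with the compactness of $M$ and of the hypothetical transverse section $N$. Suppose such an $N$ exists. Since $N$ has codimension $k$ and is everywhere transverse to the orbits of $\phi$, and $I = T\phi$ is spanned by the commuting Reeb fields $X_1,\dots,X_k$, the action restricted to a neighborhood of $N$ looks like a product: there is a (small) open box $U\subset \R^k$ such that the map $N\times U \to M$, $(y,t)\mapsto \phi_t(y)$, is a diffeomorphism onto an open neighborhood of $N$. More importantly, I would like to produce from $N$ a globally defined ``first-return'' or ``time'' structure and derive a contradiction with the fact that $\Omega$ is exact-ish on the leaves — so the real engine is a Stokes/averaging argument.

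Here is the concrete route I would take. The key observation is that $\L_{X_j}\alpha_i = i_{X_j}d\alpha_i + d(i_{X_j}\alpha_i) = i_{X_j}d\alpha_i + d\delta_{ij} = i_{X_j}d\alpha_i$, and since $\ker d\alpha_i = I \ni X_j$, we get $\L_{X_j}\alpha_i = 0$; likewise $\L_{X_j}d\alpha_i = 0$ and hence $\L_{X_j}\Omega = 0$, i.e. the contact flow is volume-preserving and preserves each $\alpha_i$. Now consider the $(k-1+2n)$-form $\beta := i_{X_k}\Omega = \pm\,\alpha_1\wedge\cdots\wedge\alpha_{k-1}\wedge d\alpha_1^n$ (up to sign/relabeling); as computed in the proof of Lemma \ref{reebfields}, $d\beta = 0$. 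Restrict $\beta$ to $N$. Because $N$ is transverse to the orbits and has the complementary dimension $2n + (k-1)$, the restriction $\beta|_N$ is a volume form on $N$ (the orbit directions $X_1,\dots,X_{k-1}$ are contracted into $\Omega$, $X_k$ is the one already removed, and transversality guarantees $\beta|_N$ does not degenerate) — hence $\int_N \beta \neq 0$ after choosing an orientation. The contradiction I aim for: $N$ being a closed submanifold transverse to a flow generated by a nowhere-zero vector field $X_k$ that preserves $\beta$ and has $d\beta=0$ forces $\int_N\beta = 0$, because $N$ bounds in the ``mapping torus'' direction — more carefully, one shows the cohomology class of $\beta$ pairs to zero with $[N]$ using that $X_k$ has no fixed points and $i_{X_k}\beta = i_{X_k}i_{X_k}\Omega = 0$ while $\L_{X_k}\beta = 0$, so $\beta$ is basic for the flow of $X_k$ and its integral over a transversal is a flow invariant that must vanish on a closed transversal.

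The cleanest way to make that last step rigorous, and the step I expect to be the main obstacle, is the following: pick a bump-like function or use compactness to get a complete flow $\phi^{X_k}_t$, and note that for a global transverse section $N$ the orbits of $X_k$ would all have to meet $N$; then $N$ is a global cross-section to the flow of the single vector field $X_k$, so $M$ fibers over $S^1$ (or $N$ is a cross-section of a suspension), and on such a mapping torus the closed form $\beta$ with $i_{X_k}\beta = 0$, $\L_{X_k}\beta=0$ is pulled back from the base $N$ up to the ``$dt$'' correction; comparing $\int_N\beta$ along $N$ and along its first-return image $\psi(N)$, invariance of $\beta$ gives equality, but Stokes on the region between $N$ and $\psi(N)$ (a product $N\times[0,1]$, on which $d\beta=0$ and the side contributions cancel by periodicity) — or rather a direct computation using $\L_{X_k}\beta=0$ and $d(i_{X_k}\Omega)=0$ — is vacuous, so the genuine contradiction must instead come from a \emph{different} form.

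So, reconsidering, the form that actually does the job is $\Omega$ itself together with a primitive: since $d\alpha_1$ restricted to $F$ is symplectic and $I\subset\ker d\alpha_1$, one has $\Omega = \alpha_1\wedge\cdots\wedge\alpha_k\wedge d\alpha_1^{\,n} = d\big(\tfrac{1}{?}\,\alpha_1\wedge\cdots\wedge\alpha_k\wedge \alpha_1\wedge d\alpha_1^{\,n-1}\big)$ fails, but $\alpha_k \wedge d\alpha_1^{\,n}$ is \emph{not} exact in general either; the correct primitive uses that $d(\alpha_{j_1}\wedge\cdots\wedge\alpha_{j_s}\wedge d\alpha_l^n)=0$ shown in Lemma \ref{reebfields}, hence $d(\alpha_2\wedge\cdots\wedge\alpha_k\wedge d\alpha_1^n) = 0$, and one integrates this $(2n+k-1)$-form over $N$. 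Transversality of $N$ to $\phi$ means $T_yM = T_yN\oplus I(y)$; plugging a basis of $I(y)$ into $\Omega$ recovers (up to sign) exactly $\alpha_2\wedge\cdots\wedge\alpha_k\wedge d\alpha_1^n$ evaluated on $T_yN$ after one more contraction, so this restriction is a volume form on $N$ and $\int_N \alpha_2\wedge\cdots\wedge\alpha_k\wedge d\alpha_1^n\neq 0$; on the other hand, I will show this closed form is exact — writing $\alpha_2\wedge\cdots\wedge\alpha_k\wedge d\alpha_1^n = \tfrac{1}{n+1}\,d\big(\alpha_1\wedge\alpha_2\wedge\cdots\wedge\alpha_k\wedge d\alpha_1^{\,n-1}\big)$ is false dimensionally, so instead I use $\alpha_2\wedge\cdots\wedge\alpha_k\wedge d\alpha_1^n = d\big(\alpha_2\wedge\cdots\wedge\alpha_k\wedge \gamma\big)$ only if $d\alpha_1^n=d\gamma$, which holds with $\gamma$ a local primitive but not global. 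The honest resolution, and what I would actually write, is: use that $\alpha_1\wedge\cdots\wedge\alpha_k\wedge d\alpha_1^n$ is a volume form to see $X_1,\dots,X_k$ together with $F$ span $TM$; restrict the \emph{closed} form $\eta:=\alpha_2\wedge\cdots\wedge\alpha_k\wedge d\alpha_1^{\,n}$ to $N$, get a volume form (nonzero integral); but $\eta = i_{X_1}\Omega$ and, as in Lemma \ref{reebfields}'s computation, $\eta$ is closed and moreover $\L_{X_1}\eta = 0$ with $i_{X_1}\eta=0$, so $\eta$ descends to a closed form on the local quotient; a global transversal would make this quotient a manifold with $\int_N\eta$ computing a nonzero class, \emph{while at the same time} $\eta$ must be exact on $M$ because $H^{2n+k-1}$ is killed by the contracting–expanding structure — at which point I invoke the Reeb-field argument that $\eta = d\mu$ for $\mu = \tfrac{1}{n}\alpha_2\wedge\cdots\wedge\alpha_k\wedge\alpha_1\wedge d\alpha_1^{\,n-1}$ is checked directly: $d\mu = \tfrac{1}{n}(d\alpha_2\wedge\cdots)\,+ \cdots$; the cross terms with $d\alpha_i\wedge d\alpha_1^{\,n}=0$ (shown in Lemma \ref{reebfields}) all vanish, leaving $d\mu = \tfrac{1}{n}\alpha_2\wedge\cdots\wedge\alpha_k\wedge d\alpha_1\wedge d\alpha_1^{\,n-1} = \alpha_2\wedge\cdots\wedge\alpha_k\wedge d\alpha_1^{\,n}\cdot\tfrac{1}{n}\cdot n = \eta$ up to sign. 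Thus $\eta$ is exact, so $\int_N\eta = \int_N d\mu = 0$ by Stokes (as $N$ is closed), contradicting that $\eta|_N$ is a volume form. This final paragraph is the heart of the argument, and verifying that all the cross terms in $d\mu$ vanish using $d\alpha_i\wedge d\alpha_1^n = 0$ and $\alpha_i\wedge$ these — i.e. pinning down the exact primitive $\mu$ — is the main obstacle; everything else (transversality $\Rightarrow$ $\eta|_N$ nondegenerate, Stokes on closed $N$) is routine.
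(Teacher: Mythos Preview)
Your proposal has a fatal dimensional error. The submanifold $N$ has codimension $k$, hence $\dim N = 2n$. The form you eventually settle on, $\eta = \alpha_2\wedge\cdots\wedge\alpha_k\wedge d\alpha_1^{\,n} = i_{X_1}\Omega$, has degree $2n + k - 1$. For $k\geq 2$ this is strictly larger than $\dim N$, so $\eta|_N$ vanishes identically and cannot be a volume form; your contradiction evaporates. (You seem to slip at one point into thinking of $N$ as a codimension-one cross section to a single Reeb flow, writing ``$N$ \ldots has the complementary dimension $2n+(k-1)$'', which is not what the lemma says.) The same objection applies to your earlier attempt with $\beta = i_{X_k}\Omega$.

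The paper's argument is exactly the Stokes idea you are circling, but with the correct $2n$-form: restrict $d\alpha_j^{\,n}$ to $N$. Since $\ker d\alpha_j = I$ and transversality gives $T_yN\oplus I(y)=T_yM$, the $2$-form $d\alpha_j|_{T_yN}$ is nondegenerate, so $(d\alpha_j^{\,n})|_N$ is a volume form and $\int_N d\alpha_j^{\,n}\neq 0$. But $d\alpha_j^{\,n} = d\bigl(\alpha_j\wedge d\alpha_j^{\,n-1}\bigr)$ is exact, so Stokes on the closed manifold $N$ gives $\int_N d\alpha_j^{\,n}=0$, the desired contradiction. In other words, you should contract $\Omega$ by \emph{all} of $X_1,\dots,X_k$ (yielding $\pm d\alpha_j^{\,n}$), not just one of them; then both the degree matches $\dim N$ and the primitive $\alpha_j\wedge d\alpha_j^{\,n-1}$ is immediate, with no delicate cross-term cancellation needed.
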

\begin{proof}
    In fact, suppose that $N\subset M$ is a global transverse section for the contact action, then the restriction of $d\lambda_j^n$ to $N$ is a volume form on $N$, but $d\lambda_j^n = d(\lambda_j\wedge d\lambda_j^{n-1})$. By Stokes theorem, $N$ has volume zero which is absurd. 
\end{proof}

\begin{remark}
Among the examples of generalized $k$-contact structures, we have the notion of contact pair, introduced by G. Bande, and A. Hadjar \cite{Bande} and the notion of $r$-contact structure, introduced by P. Bolle in \cite{Bolle}. The Contact pair case is a generalized $2$-contact structure, such that, the $1$-forms $\alpha_1$ and $\alpha_2$ are of the form $\alpha+\eta$ and $\alpha-\eta$, where $\alpha$ and $\eta$ are of constant rank. The $r$-contact structure of P. Bolle is a generalized $r$-contact structure, such that $d\alpha_i = d\alpha_j$ for every $i,j$.

\end{remark}

\begin{lemma}\label{PBundle}
    Let $(B,\alpha,TB = I\oplus F)$ be  a generalized $r$ contact manifold and $\pi: M\to B$ a principal torus bundle (with standard fiber $\mathbb T^l$). Suppose that this bundle admits a flat connection, then, this connection naturally induces a generalized $r+l$ contact structure on $M$.
\end{lemma}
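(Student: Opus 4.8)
The plan is to pull back the contact forms from $B$ and combine them with the connection $1$-forms on $M$. Let $\pi\colon M\to B$ be the principal $\mathbb T^l$-bundle and let $\theta = (\theta_1,\dots,\theta_l)$ be the $\mathbb R^l$-valued connection $1$-form of the given flat connection; write $H\leq TM$ for the horizontal distribution, so $TM = V\oplus H$ where $V = \ker\theta$ is the vertical distribution (tangent to the torus fibers), and $H$ is integrable because the connection is flat. First I would set $\alpha_{r+s} := \theta_s$ for $1\le s\le l$ and $\alpha_i := \pi^*\alpha_i$ for $1\le i\le r$ (abusing notation for the pulled-back forms). Since the structure group $\mathbb T^l$ is abelian, the curvature is $d\theta_s$ restricted to... — more precisely flatness gives $d\theta_s = 0$ for each $s$ (the curvature $2$-form vanishes and, the connection being flat and the group abelian, each $\theta_s$ is closed). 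The candidate splitting is $TM = \widetilde I \oplus \widetilde F$ where $\widetilde F$ is the horizontal lift of $F$ (a subbundle of $H$ of rank $2n$) and $\widetilde I := V \oplus \widetilde{I_B}$, with $\widetilde{I_B}$ the horizontal lift of $I$; thus $\dim\widetilde I = l + r$, as required for an $(r+l)$-contact structure on a manifold of dimension $(2n+r)+l = 2n+(r+l)$.

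Next I would verify the two axioms of Definition \ref{genkcont}. For axiom (1): $\bigcap_{i=1}^{r}\ker(\pi^*\alpha_i) \cap \bigcap_{s=1}^{l}\ker\theta_s = \bigcap_s\ker\theta_s \cap \pi^{-1}_*(\bigcap_i\ker\alpha_i) = H \cap (\text{horizontal lift of }F\text{-directions}) = \widetilde F$; here one uses that $d\pi$ restricted to $H$ is a fiberwise isomorphism onto $TB$. For axiom (2): since each $\theta_s$ is closed, $d\alpha_{r+s} = 0$, so $\ker(d\alpha_{r+s}) = TM \neq \widetilde I$ — this is the point where naive pullback fails, and it is the main obstacle (see below). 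For the forms coming from $B$, $d(\pi^*\alpha_j) = \pi^*(d\alpha_j)$, and since $\ker(d\alpha_j) = I$ on $B$ one gets $\ker(\pi^* d\alpha_j) = (d\pi)^{-1}(I) = V \oplus \widetilde{I_B} = \widetilde I$, using again that $d\pi|_H$ is an isomorphism and $d\pi(V)=0$. So the forms $\pi^*\alpha_j$ behave correctly, and the linear independence of $\{\pi^*\alpha_1,\dots,\pi^*\alpha_r,\theta_1,\dots,\theta_l\}$ is immediate since the first group is horizontal and annihilates $V$ while $(\theta_s)$ restricts to a basis of $V^*$ fiberwise.

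The genuine difficulty is exactly that a \emph{flat} connection gives $d\theta_s = 0$, which violates condition (2) for the new forms $\alpha_{r+s}$. I expect the resolution is to \emph{twist} the definition: instead of taking $\alpha_{r+s} = \theta_s$ alone, one should take forms of the mixed shape $\theta_s + \pi^*\gamma_s$ or rescale, so that $d\alpha_{r+s}$ acquires a nondegenerate-on-$\widetilde F$ part pulled up from $B$; the natural choice is $d\alpha_{r+s} := \pi^* d\alpha_{j(s)}$ for some index assignment, which forces $\alpha_{r+s} = \theta_s + \pi^*\beta_s$ with $d\beta_s = d\alpha_{j(s)}$ — and since on a contact-type manifold $d\alpha_j$ need not be exact, one instead keeps $d\alpha_{r+s}$ in the span of the $d\alpha_j$'s, exploiting that $d\alpha_i\wedge d\alpha_l^n = 0$ (established in the proof of Lemma \ref{reebfields}) to keep the wedge $\alpha_1\wedge\cdots\wedge\alpha_{r+l}\wedge d\alpha_j^n$ a volume form. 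Concretely I would: (i) choose the twisted forms; (ii) recheck axiom (1), which is unaffected since $\pi^*\beta_s$ is horizontal; (iii) recheck axiom (2) with the new $d\alpha_{r+s}$, where nondegeneracy on $\widetilde F$ now comes from that of $d\alpha_{j(s)}$ on $F$ via the horizontal-lift isomorphism, and $\widetilde I\subseteq\ker(d\alpha_{r+s})$ because $\widetilde I$ projects into $I = \ker d\alpha_{j(s)}$ and contains $V$; and (iv) confirm the Reeb fields of the new structure are the horizontal lifts $\widetilde X_1,\dots,\widetilde X_r$ together with the fundamental vector fields of the $\mathbb T^l$-action, automatically commuting by flatness and abelianness of the fiber.
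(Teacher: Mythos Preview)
Your approach is essentially the paper's: it too takes the connection forms $\xi_s$ (your $\theta_s$), shows they are closed by flatness and abelianness, and then twists them to $\xi_s + \pi^*\alpha_{j_s}$ so that $d(\xi_s + \pi^*\alpha_{j_s}) = \pi^*d\alpha_{j_s}$ has kernel exactly $\widetilde I = V\oplus\widetilde{I_B}$. Two small points: you wrote $V=\ker\theta$, but of course it is $H=\ker\theta$ (the vertical bundle is where $\theta$ is nondegenerate); and your detour about ``$d\alpha_j$ need not be exact'' is a red herring---you want $\beta_s$ with $d\beta_s = d\alpha_{j(s)}$, and $\beta_s = \alpha_{j(s)}$ works on the nose, which is precisely what the paper does.
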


\begin{proof}
    	Let $Y_i$, $i = 1,\dots,l$ be the vector fields that generates the $\mathbb T^l$-action. Consider on each torus, fibers of the bundle, the canonical $1$-forms $\xi_1,\dots,\xi_l$, $\xi_i(Y_j) = \delta_{ij}$. Consider a flat connection on $M$, and let $TM = H \oplus V$ the associated decomposition into horizontal ($H$) and vertical ($V$) bundles. Using this connection we can extend the forms $\xi_1,\dots,\xi_l$ to global forms on $M$.
	
	
	As $\mathbb T^l$ is abelian, it follows that 
	\begin{align}\label{vv}
	    d\xi_i(Y_l,Y_k) = -\xi_i([Y_l,Y_k])=0
	\end{align}
	for any $l,k$. It is known 
	that for any horizontal vector field $Y$, the commutator $[Y_i,Y]$ is horizontal. Therefore, the $1$-forms $\xi_i$ satisfies, for any horizontal vector field $W$,
	\begin{align}\label{aids}
		d\xi_i(Y_l,W) = Y_l(\xi_i(W)) - W(\xi_i(Y_l)) - \xi_i([Y_l,W]) = 0
	\end{align}
	and thus $i_{Y_l}d\xi = 0$. Moreover, as the connection is flat, the horizontal distribution is involutive\footnote{A distribution $D\subset TM$ is involutive if $[Y,Z]$ is tangent to $D$ for any vector fields $Y,Z$ tangent to $D$} and therefore, for any two horizontal vector fields $W_1,W_2$ we have 
	\begin{align}\label{adsna}
d\xi_i(W_1,W_2) = W_1(\xi_i(W_2)) - W_2(\xi_i(W_1) - \xi_i([W_1,W_2])=0
	\end{align}
	
	It follows from (\ref{vv}), (\ref{aids}) and (\ref{adsna}) that $d\xi_i = 0$. In particular, for any $1\leq i\leq l$ and $1\leq j\leq r$, we have $$d(\xi_i + \pi^*\alpha_j) =\pi^*d\alpha_j$$

	As we have the splitting $TB = I\oplus F$, the fibres $H_p$ of the horizontal bundle $H$ can be identified with the fibres $T_{\pi(p)}B$ of the tangent bundle $TB$, we have an induced splitting $H = \hat I\oplus \hat F$. We denote $\hat V = V\oplus \hat I\subset TM$.  
	It is clear that, for any $j$, $\ker (d\pi^*\alpha_j) = \hat V$, and thus, for any $1\leq i\leq l$ and $1\leq j\leq r$, 
	  $\hat V =\ker d(\xi_i + \pi^*\alpha_j).$
	  
	  From this considerations, it is clear that, for any $(j_1,\dots,j_l) \in \{1,\dots,r\}^l$, we have the following generalized $(l+r)$-contact structure on $M$:  
	$$(M,\xi_1 +\pi^*\alpha_{j_1} ,\dots,\xi_l + \pi^*\alpha_{j_l} ,\pi^*\alpha_1,\dots,\pi^*\alpha_r, TM = \hat V\oplus \hat F)$$
	
	In particular, if $B$ is a contact manifold (equivalently, $1$-contact or generalized $1$-contact manifold), and $M\to B$ is a principal torus bundle with a flat connection, then, the induced structure we have constructed above is a $(l+1)$-contact manifold. 
\end{proof}
\begin{remark}
    In the previous proof, we made some arbitrary choices in the construction of the $1$-forms of the induced generalized $k$-contact structure and we may question why is this induced structure "natural". However, while there were some arbitrary choices on the construction of the $1$-forms, the flat connection induces both a natural splitting $TM = \hat V\oplus \hat F$, and a natural lift of the $\R^l$ action to $M$ which commutes with the natural principal bundle action, and thus, it give us a natural $\R^{l+r}$ action.  The following lemma show us that the most important part of the definition of a generalized $k$-contact structure is the splitting and the associated action, which give us some freedom to choose the $1$-forms. This justifies the use of the words "naturally induced generalized contact structure".
\end{remark}

\begin{lemma}\label{pertub1}
	Let $(M,\alpha,TM = I\oplus F)$ be a generalized $k$-contact structure, and let $B = (\beta_{ij})\in M_{k\times k}(\R)$. We define $\eta_i = \alpha_i - \sum_{j=1}^k\beta_{ij}\alpha_j$. Then, if $B$ is sufficiently small, $(M,\eta,TM = I\oplus F)$ is a generalized $k$-contact structure.
	
	In other words, we can understand the Reeb vector fields $X_1,\dots,X_k$ as a framing of the sub-bundle $I$, and we are allowed to take a different framing $Y_1,\dots,Y_k$ of $I$, as long as it is sufficiently close to the original and 
	$$Span_{\R}\{X_1,\dots,X_k\} = Span_{\R}\{Y_1,\dots,Y_k\}$$
\end{lemma}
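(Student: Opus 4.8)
The plan is to repackage condition (2) of Definition~\ref{genkcont} using Remark~\ref{volform} and then reduce the whole statement to a single openness observation about the top-degree forms $\alpha_1\wedge\cdots\wedge\alpha_k\wedge d\alpha_j^n$. First I would record the matrix form of the perturbation: setting $A=(\delta_{ij}-\beta_{ij})=I_k-B$ we have $\eta_i=\sum_j A_{ij}\alpha_j$, hence $\eta_1\wedge\cdots\wedge\eta_k=\det(A)\,\alpha_1\wedge\cdots\wedge\alpha_k$ and $d\eta_j=\sum_l A_{jl}\,d\alpha_l$. Two facts are then immediate, needing only that $\det(I_k-B)\neq 0$: (a) since $i_X d\alpha_l=0$ for every $X\in I$ (because $\ker d\alpha_l=I$), also $i_X d\eta_j=\sum_l A_{jl}\,i_X d\alpha_l=0$, so $I\subseteq\ker d\eta_j$ for all $j$; and (b) if the $\eta_i$ are pointwise linearly independent, then at each point they span the same $k$-plane of the cotangent space as the $\alpha_i$ (being invertible linear combinations of them), so $\bigcap_i\ker\eta_i=\bigcap_i\ker\alpha_i=F$, and the splitting $TM=I\oplus F$ is literally unchanged. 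Thus it remains only to establish, for $B$ small, the pointwise independence of the $\eta_i$ together with the non-degeneracy of $(d\eta_j)_{|F}$; since $\dim F=2n$, by Remark~\ref{volform} both follow at once from the claim that for each $j$ the form $\eta_1\wedge\cdots\wedge\eta_k\wedge d\eta_j^{\,n}$ is nowhere zero.

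To prove that claim I would expand, using that the $d\alpha_l$ are $2$-forms and hence wedge-commute,
$$d\eta_j^{\,n}=\sum_{l_1,\dots,l_n}A_{jl_1}\cdots A_{jl_n}\,d\alpha_{l_1}\wedge\cdots\wedge d\alpha_{l_n},$$
and observe that, $\Omega_j:=\alpha_1\wedge\cdots\wedge\alpha_k\wedge d\alpha_j^n$ being a volume form (Remark~\ref{volform}), each top form $\alpha_1\wedge\cdots\wedge\alpha_k\wedge d\alpha_{l_1}\wedge\cdots\wedge d\alpha_{l_n}$ equals $g_{l_1\cdots l_n}\cdot\Omega_j$ for a smooth function $g_{l_1\cdots l_n}$ on $M$, with $g_{j\cdots j}\equiv 1$. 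Wedging with $\eta_1\wedge\cdots\wedge\eta_k=\det(I_k-B)\,\alpha_1\wedge\cdots\wedge\alpha_k$ gives
$$\eta_1\wedge\cdots\wedge\eta_k\wedge d\eta_j^{\,n}=f_B^{(j)}\cdot\Omega_j,\qquad f_B^{(j)}:=\det(I_k-B)\sum_{l_1,\dots,l_n}A_{jl_1}\cdots A_{jl_n}\,g_{l_1\cdots l_n},$$
where $f_B^{(j)}$ is a polynomial in the entries of $B$ with smooth coefficients on $M$ and value the constant function $1$ at $B=0$. If $M$ is compact these coefficient functions are bounded, so $f_B^{(j)}>0$ everywhere for all $j$ once $\|B\|$ is small enough, which is exactly what we needed; if $M$ is noncompact the same holds on any relatively compact open set, so the hypothesis ``$B$ sufficiently small'' is to be read in that light. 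This last non-degeneracy statement is the only genuine content, and the step I expect to be the crux: everything else is bookkeeping with the matrix $A$, and the one real subtlety is the uniformity over $M$ of the openness of non-degeneracy, which is why compactness (available in all the intended applications, where $M=\Gamma\backslash G/K$) is the natural setting.

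Finally, for the reformulation stated after the lemma, I would note that the Reeb vector fields $Y_1,\dots,Y_k$ of the new structure are characterized by $\eta_i(Y_l)=\delta_{il}$, that is by $(\alpha_m(Y_l))_{m,l}=A^{-1}$, so $Y_l=\sum_m (A^{-1})_{ml}\,X_m$ where $X_1,\dots,X_k$ are the original Reeb fields. Since $A^{-1}=(I_k-B)^{-1}\to I_k$ as $B\to 0$, the $Y_l$ are a framing of $I$ with the same $\R$-span as $X_1,\dots,X_k$ and arbitrarily close to it; conversely any such framing arises from a suitable small $B$ by inverting this relation. This makes precise the assertion that the splitting $TM=I\oplus F$ and the span of the Reeb fields are the essential data, the particular defining $1$-forms being flexible.
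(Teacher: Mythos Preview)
Your argument is correct and follows essentially the same route as the paper's own proof: both verify $\bigcap_i\ker\eta_i=F$ from the invertibility of $I_k-B$, both note $I\subset\ker d\eta_j$ by linearity, and both reduce the non-degeneracy condition to the openness of the top-form $\eta_1\wedge\cdots\wedge\eta_k\wedge d\eta_j^{\,n}$ being a volume form near $B=0$. Your version is in fact a bit more careful than the paper's, which simply writes $d\eta_i^n=d\alpha_i^n+\omega$ with $\omega$ ``small'' and invokes openness without addressing uniformity over $M$; your explicit polynomial expression $f_B^{(j)}\cdot\Omega_j$ and your remark on compactness make that step precise.
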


\begin{proof}
	As $B$ is small, then $Id - B$ is invertible, then 
	$$\bigcap\ker \eta_ip
	= \bigcap\ker\alpha_i = F$$
	Also, we can write
	$$d\eta_i = (1- \beta_{ii})d\alpha_i + \sum_{j\neq i}\beta_{ij}d\alpha_j$$
	where $\beta_{ij}$ are small. And thus 
	$$d\eta_i^n = d\alpha_i^n + \omega$$
	where $\omega$ is small. As non degeneracy is an open condition, if $\omega$ is small enough, then $d\eta_i$ is non degenerate. Similarly, $d\eta_i^n = d\alpha_i^n + \omega_0$ where $\omega_0$ is small, and therefore $$\eta_1\wedge\dots\wedge\eta_k\wedge d\eta_j^n = \det(Id -B)\alpha_1\wedge\dots\wedge\alpha_k(d\alpha_i^n + \omega_0)$$
	which is non zero for small $\omega_0$.
	
	Finally, $I\subset \ker (d\eta_j)$, but as $TM =I\oplus F$ and $d\eta_j$ is non degenerate on $F$ (for small perturbation of the identity $B$), then $\ker (d\eta_j)\subset I$
\end{proof}

A similar argument proves the following Lemma
\begin{lemma}\label{anterior}
	Consider a manifold $M$ of dimension $2n+k$, a splitting, $TM = I\oplus F$, $dim I = k$ and, linear independent, non vanishing $1$-forms $\alpha_1,\dots,\alpha_k$ such that
	\begin{itemize}
		\item $F = \bigcap_{i}\ker(\alpha_i)$
		\item $d\alpha_k$ is non degenerate on $F$ and $I = \ker(d\alpha_k)$
		\item $I\subset\ker d\alpha_j$ for $j=1,\dots,k-1$
	\end{itemize} 
	Then, there exists $B = (\beta_{ij})\in GL(\R^{n-1})$ and a change of coordinates $\eta_{j} = \sum_{i}\beta_{ij}\alpha_{j}$ , $j=1,\dots,k-1$ such that $(M, \eta_1,\dots,\eta_{k-1},\alpha_k,TM = I\oplus F)$ is a generalized $k$-contact manifold
\end{lemma}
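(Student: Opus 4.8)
The plan is to produce the required change of coordinates explicitly rather than abstractly. Fix constants $c_1,\dots,c_{k-1}\in\R$ (they may all be taken equal) and put $\eta_j=\alpha_j+c_j\alpha_k$ for $1\le j\le k-1$; this is of the announced form $\eta_j=\sum_i\beta_{ij}\alpha_i$, and once $\alpha_k$ is adjoined the resulting $k\times k$ coefficient matrix is unitriangular, hence invertible. I would first record the facts that hold for \emph{every} such choice of the $c_j$. Since the coefficient matrix is invertible, $\{\eta_1,\dots,\eta_{k-1},\alpha_k\}$ is pointwise linearly independent and, at each point, spans the same subspace of $T^*M$ as $\{\alpha_1,\dots,\alpha_k\}$; hence $\bigcap_{j<k}\ker\eta_j\cap\ker\alpha_k=\bigcap_i\ker\alpha_i=F$, which is condition (1) of Definition \ref{genkcont}. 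For condition (2): the form $\alpha_k$ is untouched, so $\ker(d\alpha_k)=I$ by hypothesis; and since $d\eta_j=d\alpha_j+c_j\,d\alpha_k$ with $c_j$ constant, the hypotheses $I\subset\ker d\alpha_j$ and $I=\ker d\alpha_k$ give $I\subset\ker d\eta_j$ immediately.

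Thus everything reduces to choosing the $c_j$ so that, in addition, $d\eta_j$ is non-degenerate on $F$ for each $j$: granting this, $I\subset\ker d\eta_j$ together with $TM=I\oplus F$ forces $\ker d\eta_j=I$, and by Remark \ref{volform} the tuple $(M,\eta_1,\dots,\eta_{k-1},\alpha_k,TM=I\oplus F)$ is then a generalized $k$-contact structure. By Remark \ref{volform}, since $\dim F=2n$, non-degeneracy of $d\eta_j$ on $F$ is equivalent to $(d\eta_j^n)|_F\neq0$ everywhere. Expanding, $(d\eta_j)^n=(d\alpha_j+c_j\,d\alpha_k)^n=c_j^n\,d\alpha_k^n+\cdots$, the remaining terms being of lower degree in $c_j$; so at each point $(d\eta_j^n)|_F$ is a polynomial of degree $n$ in $c_j$ whose leading coefficient is $(d\alpha_k^n)|_F\neq0$ (as $d\alpha_k$ is non-degenerate on $F$), hence nonzero for all $c_j$ outside a finite set. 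This is precisely the openness-of-non-degeneracy argument of Lemma \ref{pertub1}, applied now around the non-degenerate form $c_j\,d\alpha_k|_F$: equivalently, since non-degeneracy on $F$ is scale-invariant, one is perturbing $\alpha_k$ by the small form $c_j^{-1}\alpha_j$.

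The one step that genuinely needs attention — and the place I expect the main difficulty to sit — is that a single value of $c_j$ must work at all points of $M$ simultaneously, whereas a priori the exceptional finite sets above could be unbounded as the point varies. When $M$ is compact this is automatic: the coefficients of the polynomial depend continuously on the point and its leading coefficient is bounded away from $0$, so all its roots lie in a fixed bounded set, and any $c_j$ beyond that set works. In the situations this lemma is meant for — the left-invariant data on $\Gamma\backslash G/K$ in Sections 4 and 5 — the forms $\alpha_i$ and the splitting are invariant, so non-degeneracy need only be tested at one point and the issue does not arise. Picking each $c_j$ (say, all equal) larger than every exceptional value, for every $j=1,\dots,k-1$, then completes the argument.
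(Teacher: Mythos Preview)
Your proof is correct and is precisely the argument the paper has in mind: the paper gives no separate proof of this lemma beyond the remark ``A similar argument proves the following Lemma,'' pointing back to the openness-of-non-degeneracy perturbation of Lemma~\ref{pertub1}, which is exactly what you carry out by writing $\eta_j=\alpha_j+c_j\alpha_k$ (equivalently $c_j^{-1}\alpha_j+\alpha_k$, a small perturbation of the non-degenerate $\alpha_k$). Your explicit discussion of the uniformity-in-$M$ issue is in fact more careful than the paper's own treatment.
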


With some additional conditions, we can improve Lemma \ref{pertub1}
\begin{lemma}\label{repa}
	Under the conditions of the previous Lemma \ref{pertub1}, suppose that the action is topologically transitive.
	For $B\in Gl(\R^k)$, denote by $\eta = B\alpha$ the change of coordinates $\eta_i = \sum_j\beta_{ij}\alpha_j$. Then, there exists a Zariski  open subset of $\Lambda \subset Gl(\R^k)$ such that $(M,B\alpha,TM = I\oplus F)$ is generalized $k$-contact.
\end{lemma}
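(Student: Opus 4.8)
The plan is: for each point $p\in M$ pin down the set of "bad" covectors $v\in\R^{k}$ at which $\sum_i v_i\,d\alpha_i$ degenerates on $F_p$, prove that this set is a single proper algebraic hypersurface of $\R^{k}$ \emph{independent of $p$} — this is where topological transitivity is used — and then cut $\Lambda$ out of $Gl(\R^{k})$ by a single polynomial. First I would record the relevant invariance. Since $X_j\in\Gamma(M,I)=\Gamma(M,\ker d\alpha_i)$ and $\alpha_i(X_j)=\delta_{ij}$ is constant, Cartan's formula gives $\mathcal L_{X_j}\alpha_i=i_{X_j}d\alpha_i+d(i_{X_j}\alpha_i)=0$, so each $\alpha_i$, and hence each $d\alpha_i$, is invariant under the contact action $\phi$. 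Therefore, for every $v\in\R^{k}$,
$$\Theta^{v}:=\alpha_1\wedge\cdots\wedge\alpha_k\wedge\Big(\sum_{i}v_i\,d\alpha_i\Big)^{n}$$
satisfies $\phi_t^{*}\Theta^{v}=\Theta^{v}$, is homogeneous of degree $n$ in $v$, and (exactly as in Remark \ref{volform}, using $I\subset\ker d\alpha_i$) $\Theta^{v}_p\neq0$ iff $\sum_i v_i\,d\alpha_i$ is non-degenerate on $F_p$; in particular $\Theta^{e_i}\neq0$ everywhere.

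Next I would show the degeneracy polynomial is constant. Fix $p$ and a basis of the line $\Lambda^{2n+k}T_p^{*}M$, and write $\Theta^{v}_p=P_p(v)\cdot(\text{basis})$; then $P_p$ is a homogeneous degree-$n$ polynomial on $\R^{k}$, nonzero since $P_p(e_i)\neq0$, and well defined up to a nonzero scalar, so $g(p):=[P_p]\in\mathbb P\big(\mathrm{Sym}^{n}(\R^{k})^{*}\big)$ is well defined; choosing the trivialization smoothly in $p$ on a neighbourhood shows $g$ is continuous. The invariance $\phi_t^{*}\Theta^{v}=\Theta^{v}$ forces $P_{\phi_t(p)}$ to be a scalar multiple of $P_p$, so $g$ is constant along $\phi$-orbits. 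By topological transitivity there is a dense orbit, and a continuous map into a Hausdorff space that is constant on a dense set is constant; hence $g\equiv[P_0]$ for a single nonzero homogeneous polynomial $P_0$, necessarily with $P_0(e_i)\neq0$. Equivalently, for every $p\in M$ the degeneracy locus $\{v:\Theta^{v}_p=0\}$ equals the fixed proper algebraic set $\{P_0=0\}\subset\R^{k}$.

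Finally I would assemble $\Lambda$. Put $Q(B):=\det(B)\prod_{j=1}^{k}P_0\big(\mathrm{row}_j(B)\big)$, a polynomial on $M_{k\times k}(\R)$ with $Q(\mathrm{Id})\neq0$, and let $\Lambda:=\{B:Q(B)\neq0\}$, a nonempty Zariski-open subset of $Gl(\R^{k})$. For $B\in\Lambda$ set $\eta_j=\sum_i\beta_{ji}\alpha_i$. Invertibility of $B$ gives pointwise linear independence of the $\eta_j$ and $\bigcap_j\ker\eta_j=\bigcap_i\ker\alpha_i=F$; from $d\eta_j=\sum_i\beta_{ji}\,d\alpha_i$ and $I\subset\ker d\alpha_i$ we get $I\subset\ker d\eta_j$; and $P_0(\mathrm{row}_j(B))\neq0$ yields $\alpha_1\wedge\cdots\wedge\alpha_k\wedge(d\eta_j)^{n}=\Theta^{\mathrm{row}_j(B)}\neq0$ everywhere, i.e. $d\eta_j|_F$ is non-degenerate at every point, which with $TM=I\oplus F$ forces $\ker d\eta_j=I$. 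Hence $(M,B\alpha,TM=I\oplus F)$ is a generalized $k$-contact structure for every $B\in\Lambda$.

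The only genuine obstacle is the middle step: without transitivity the union over $p\in M$ of the bad sets could exhaust $\R^{k}$, and the real content of the lemma is that flow-invariance of the $\alpha_i$ together with a single dense orbit pins the degeneracy polynomial down to one hypersurface. The rest is routine — checking that $g$ is independent of the local trivialization (a change of trivialization only rescales $P_p$) and continuous, and recalling that a topologically transitive $\R^{k}$-action on a second-countable manifold admits a dense orbit.
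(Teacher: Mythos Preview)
Your proof is correct and follows essentially the same approach as the paper's: both express the non-degeneracy of $d\eta_j|_F$ as the non-vanishing of a homogeneous polynomial in the entries of $B$, and both use invariance of the $\alpha_i$ under the contact action together with topological transitivity to show that this polynomial is independent of the base point. The only cosmetic difference is that the paper fixes the global reference top form $d\alpha_k^n$ on $F$ and writes each $d\alpha^J|_F=f_J\,d\alpha_k^n$ with $f_J$ a continuous invariant (hence constant) function, whereas you package the same idea more intrinsically via the continuous invariant map $p\mapsto[P_p]\in\mathbb P(\mathrm{Sym}^n(\R^k)^*)$.
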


\begin{proof}
	Like in the previous lemma, we have 
	$$F = \bigcap_{i=1}^k \alpha_i = \bigcap_{i=1}^k\eta_i$$
	and $$I\subset \ker d\eta_j\enskip\enskip\forall j = 1,\dots,k$$
	we must show that for a Zariski  open set $\Lambda$, the conditions 
	\begin{enumerate}\label{cocon}
		\item {}${d\eta_j}_{|_F}$ is non degenerate for every $1\leq j\leq k$;
		\item {}$\eta_1\wedge\cdots\wedge\eta_k\wedge d\eta_j^n$ is a volume form.
	\end{enumerate}
	are satisfied.
	
	Now, denote $$d\alpha^J = d\alpha_1^{J_1}\wedge\dots\wedge d\alpha_k^{J_n}$$
	for any multi-index $J = (J_1,\dots,J_k)\in \N^k$.
	Then, we can write 
	$$d\eta_j^n = \sum_{|J| = n}Q_Jd\alpha^J $$
	where $Q_J$ is some polynomial in the variables $\beta_{ij}$ and $|J| = J_1 + \dots + J_k$
	
	Notice that $d\alpha^J$ is clearly a top form over $F$, and thus $d\alpha^J = f_Jd\alpha_k^n$ for some function $f_J$. As the action is topologically transitive, and the forms $d\alpha^J$ and $d\alpha_k$ are invariant by this action\footnote{We recall that we defined the contact action in such way that it leaves the defining 1-forms $\alpha_1,\dots,\alpha_k$ invariant, that is, $\L_{X_i}\alpha_j=0$. In particular, this implies that, for any $1\leq j_1,\dots,j_l\leq k$, the forms $d\alpha = d\alpha_{j_1}\wedge \dots \wedge d\alpha_{j_l}$ are also invariant by the action.}, this function is constant over $M$. Thus, we write
	$$d\eta_j^n = P_j(B)d\alpha_k^n$$
	where $P_j(B)$ is a polynomial on the coefficients $\beta_{ij}$ of $B$.
	We also write
	$$\eta_1\wedge \dots\wedge\eta_k\wedge d\eta_j^k = P_j(B)\det(B)\alpha_1\wedge\dots\wedge\alpha_k\wedge d\alpha_k^n$$
	And thus, conditions (\ref{cocon}) are satisfied when $P_j(B)\neq 0$. This polynomial is non zero, for $P_j(Id) = 1$, and thus, the conditions (\ref{cocon}) are met for $B$ in a Zariski  open set.   
\end{proof}

	

\section{Anosov Dynamics}
We recall the definition of an Anosov action and some useful results:

\begin{definition}\label{anodef}
	Consider a compact smooth manifold $M$ and a smooth action $\phi:\R^k\times M \to M$. This action is said to be Anosov if there exists an element $a\in \R^k$, called an Anosov element, such that $\phi^a$ acts on $M$ normally hyperbolically, that is, there exists a, $d\phi^a$ invariant, continuous, splitting $TM = E^+\oplus T\phi\oplus E^-$, where $T\phi$ is the distribution tangent to the orbits, such that, there exists positive constants $C,\lambda$ for which
	\begin{align}\label{estimates}
		\|d\phi^{ta}(u^+)\|&\leq Ce^{-t\lambda}\|u^+\|\enskip\forall t>0\enskip\forall u^+\in E^+\\
		\|d\phi^{ta}(u^-)\|&\leq Ce^{t\lambda}\|u^i\|\enskip\forall t<0\enskip\forall u^-\in E^-
	\end{align}
\end{definition}

\begin{theorem}[Spectral decomposition for Anosov actions, for example \cite{Arbieto}]\label{Spectral} 

	Let $A:\R^k\to Diff(M)$ be an Anosov action. Then, the non-wandering\footnote{Let $G\to Hom(M)$ be an action of a topological group $G$ on a topological manifold $M$. We shall say that a point $x\in M$ is non-wandering, if, for every neighbourhood $U$ of $x$ and for every compact subset $S\subset G$, there exists $g\in S^c$ such that $gU\cap U\neq\emptyset$. The set of non-wandering points is clearly invariant by the action of $G$.} set $NW(A)$ of $A$ is  a finite union $NW(A) = \Omega_1\cup\dots\cup\Omega_l$ of disjoint compact and invariant sets $\Omega_i$. Moreover, each $\Omega_i$ cannot be further subdivided in two compact, non-empty, disjoint and invariant subsets and the action $A$ on each $\Omega_i$ is topologically transitive on $\Omega_i$.
\end{theorem}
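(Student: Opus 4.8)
The strategy is to adapt Smale's spectral decomposition for Anosov diffeomorphisms to the normally hyperbolic setting, using the Anosov element $a\in\R^k$ from Definition \ref{anodef}. First I would record that $NW(A)$ is a closed, $A$-invariant subset of the compact manifold $M$, and that the continuous splitting $TM=E^+\oplus T\phi\oplus E^-$ restricts to it; the weak subbundles $E^{ws}=E^-\oplus T\phi$ and $E^{wu}=E^+\oplus T\phi$ are the tangent distributions to $\phi^a$-invariant weak stable and unstable foliations $W^{ws}$, $W^{wu}$, whose leaves are unions of $\R^k$-orbits. The two technical inputs I would establish (or quote from the normally hyperbolic theory, e.g. \cite{Arbieto}) are: (i) an \emph{Anosov closing lemma} for $\phi^a$ on $NW(A)$, producing near every non-wandering point a nearby orbit of the action with a lattice stabiliser (a ``compact orbit''), so that compact orbits are dense in $NW(A)$; and (ii) a \emph{local product structure}: for $x,y\in NW(A)$ sufficiently close, $W^{ws}_{\mathrm{loc}}(x)$ and $W^{wu}_{\mathrm{loc}}(y)$ meet along a single local orbit segment, and this intersection lies in $NW(A)$.

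Next I would introduce the equivalence relation on $NW(A)$ defined by declaring $x\sim y$ whenever $W^{wu}(\R^k x)$ meets $W^{ws}(\R^k y)$ and $W^{ws}(\R^k x)$ meets $W^{wu}(\R^k y)$. Reflexivity and symmetry are immediate; transitivity is the substantive point and is obtained from the inclination ($\lambda$-)lemma for $\phi^a$ together with the density of compact orbits and the local product structure, exactly as in the classical argument. The local product structure shows each equivalence class is open in $NW(A)$; since the classes partition $NW(A)$, each is also closed, hence compact, and by compactness of $NW(A)$ there are only finitely many, say $\Omega_1,\dots,\Omega_l$. Because the weak stable and unstable foliations and the relation $\sim$ are $\R^k$-equivariant, each $\Omega_i$ is $A$-invariant, and the $\Omega_i$ are pairwise disjoint.

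For the last two assertions: each $\Omega_i$ is chain transitive for $\phi^a$ (every compact-orbit point in $\Omega_i$ can be chain-connected to every other using the intersections defining $\sim$ and shadowing), and a chain transitive set carrying a local product structure admits no partition into two nonempty disjoint closed invariant subsets --- otherwise a stable/unstable intersection would cross from one piece to the other, violating invariance. Topological transitivity of $A$ on $\Omega_i$ then follows by a Baire-category argument: density of compact orbits plus the local product structure gives, for any two nonempty open subsets of $\Omega_i$, an orbit of the action visiting both, and intersecting countably many such ``transfer'' conditions yields a dense orbit.

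The main obstacle, compared with the diffeomorphism case, is the presence of the central orbit direction $T\phi$: $\phi^a$ is only normally hyperbolic, the orbit foliation can have non-compact leaves (of the form $\R^j\times\T^{k-j}$), and one must develop the stable/unstable manifold theory, the closing lemma, and the notion of a ``periodic'' (compact, lattice-stabilised) orbit in this $\R^k$-equivariant setting, while checking that the extra $\R^k$ directions neither obstruct the local product structure nor break equivariance. A secondary subtlety is that the theorem concerns $NW(A)$, the non-wandering set of the whole action, rather than $NW(\phi^a)$; one must verify that the hyperbolic estimates and the closing lemma still apply on $NW(A)$, which is precisely where the cited normally hyperbolic machinery is invoked.
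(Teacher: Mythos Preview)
The paper does not prove this theorem: it is stated as a known result and attributed to \cite{Arbieto} (Arbieto--Morales, \emph{Dynamics of Partial Actions}), with no argument given in the text. So there is no ``paper's own proof'' to compare against; the authors simply invoke the spectral decomposition as a black box, using it only to deduce Corollary~\ref{tra} (volume-preserving Anosov actions are transitive).

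That said, your outline is a reasonable sketch of the Smale-type argument in the normally hyperbolic setting, and is in the spirit of what \cite{Arbieto} carries out. The points you flag as obstacles --- developing the closing lemma and local product structure relative to the orbit foliation, working with compact (lattice-stabilised) orbits in place of periodic points, and reconciling $NW(A)$ with $NW(\phi^a)$ --- are exactly the places where the $\R^k$ case departs from the diffeomorphism case, and your proposal handles them at the level of a strategy. If you wanted to turn this into a self-contained proof you would need to actually supply (or pin down precise references for) the normally hyperbolic closing/shadowing lemma and the $\lambda$-lemma you invoke, but as a roadmap it is sound.
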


\begin{corollary}\label{tra}
	An Anosov action $\phi:\R^k\times M\to M$ that preserves a volume is topologically transitive.
\end{corollary}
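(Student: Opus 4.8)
The plan is to combine the spectral decomposition theorem (Theorem~\ref{Spectral}) with the Poincaré recurrence theorem applied to an Anosov element, and then to invoke connectedness of $M$. Throughout I assume $M$ is connected; otherwise one runs the argument on each connected component separately, each of which is compact and invariant.

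First I would fix an Anosov element $a\in\R^k$ and look at the single diffeomorphism $\phi^a$. Since $\phi$ preserves a volume form and $M$ is compact, $\phi^a$ preserves a finite Borel measure $\mu$ with full support. By the Poincaré recurrence theorem, $\mu$-almost every $x\in M$ is recurrent for $\phi^a$: for every neighbourhood $U\ni x$ there are infinitely many $n\in\Z$, hence arbitrarily large $|n|$, with $\phi^{na}(x)\in U$. I claim each such recurrent point lies in $NW(\phi)$ in the sense used in Theorem~\ref{Spectral}. Indeed, given a neighbourhood $U$ of $x$ and a compact set $S\subset\R^k$, choose $n$ with $|n|$ large enough that $na\notin S$ and $\phi^{na}(x)\in U$; then $g=na\in S^c$ satisfies $\phi^{na}(x)\in gU\cap U$, so $gU\cap U\neq\emptyset$ and $x$ is non-wandering.

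Consequently $NW(\phi)$ has full $\mu$-measure. Since $NW(\phi)$ is closed (its complement, the set of wandering points, is open) and $\mu$ has full support, this forces $NW(\phi)=M$. Now apply Theorem~\ref{Spectral}: $M=NW(\phi)=\Omega_1\cup\dots\cup\Omega_l$ with the $\Omega_i$ pairwise disjoint, compact, non-empty and invariant. Each $\Omega_i$ is closed (being compact), and also open, since $\Omega_i=M\setminus\bigcup_{j\neq i}\Omega_j$ is the complement of a finite union of closed sets. As $M$ is connected this gives $l=1$, so $M=\Omega_1$, and the last assertion of Theorem~\ref{Spectral} says precisely that the action is topologically transitive on $\Omega_1=M$.

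The only genuinely delicate point is the passage from recurrence of $\phi^a$ to non-wandering of the full $\R^k$-action, namely checking that the return time $n$ can always be chosen so that $na$ escapes a prescribed compact subset of $\R^k$; this uses nothing more than recurrence occurring for infinitely many iterates. One should also keep in mind that the statement is meant for connected $M$ (or per component). Everything else is routine.
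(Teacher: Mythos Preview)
Your proof is correct and follows essentially the same route as the paper: both arguments use Poincar\'e recurrence for a single time-map of the action to show that the non-wandering set is all of $M$, and then invoke the spectral decomposition (Theorem~\ref{Spectral}) together with connectedness of $M$ to conclude transitivity. The only cosmetic difference is that the paper chooses its group element depending on the given compact set and shows directly that \emph{every} point is non-wandering, whereas you fix the Anosov element once, obtain a full-measure set of recurrent (hence non-wandering) points, and then pass to all of $M$ via closedness of $NW(\phi)$ and full support of the volume.
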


\begin{proof}
	Let $x\in M$ and let $K\subset \R^k$ be a compact set, suppose that $K\subset B_R(0)\subset\R^k$, and let $g\in B_R(0)^c$. Then, $\phi(g,\cdot)$ defines a volume preserving diffeomorphism of $M$. Fix an arbitrary neighbourhood $U$ of $x$. From the Poincar\'e recurrence theorem, for almost every $y\in U$, there exists $n\in\N$ such that $\phi(n\cdot g, y)\in U$. In particular, $(n\cdot g)\cdot U\cap U\neq\emptyset$. As $(n\cdot g)\in B_{nR}(0)^c\subset B_R(0)^c\subset K^c$ we conclude that $x$ is non-wandering. Thus, $NW(\phi) = M$. From the connectedness of $M$, it follows from Theorem \ref{Spectral}, that $\phi$ is topologically transitive.
\end{proof}

\begin{lemma}[Structure of Anosov elements, for example \cite{Ba-Maq1}]\label{structurelemma}
    The set $\mathcal A(\phi)$ of Anosov elements forms an open set of $\R^k$, and each connected component of $\mathcal A(\phi)$, is an open cone. Moreover, Anosov elements on the same open cone will have the same invariant distributions.
\end{lemma}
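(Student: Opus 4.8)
The plan is to establish three facts in order: openness of $\A(\phi)$, the cone property, and the stability of the invariant splitting on a connected component. First I would prove openness. Fix an Anosov element $a\in\A(\phi)$ with invariant splitting $TM = E^+\oplus T\phi\oplus E^-$ and constants $C,\lambda$. The standard approach is a cone-field argument: hyperbolicity of $\phi^a$ is equivalent to the existence of continuous, transverse cone fields $\mathcal C^+,\mathcal C^-$ in $TM$ (complementary to $T\phi$) that are strictly invariant and expanded/contracted by $d\phi^a$. Since $\phi$ is a smooth action, $b\mapsto d\phi^b$ depends continuously on $b\in\R^k$ (uniformly on the compact manifold $M$), so for $b$ close enough to $a$ the map $d\phi^b$ still maps $\mathcal C^+$ strictly inside itself with expansion, and likewise $d\phi^{-b}$ for $\mathcal C^-$; the classical cone criterion then yields a $d\phi^b$-invariant hyperbolic splitting, i.e. $b\in\A(\phi)$. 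Here I would cite the reference \cite{Ba-Maq1} for the precise cone-field characterization of normal hyperbolicity for $\R^k$-actions, rather than redo it.

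Next, the cone property. The key observation is that the invariant distributions and the hyperbolicity estimates are controlled by the \emph{linear functionals} (Lyapunov exponents / Mather-type spectrum) of the action: for $a\in\A(\phi)$ one has a decomposition $TM=\bigoplus_\chi E_\chi$ into coarse Lyapunov subbundles indexed by certain continuous (indeed linear on $\R^k$) functionals $\chi$, and $a$ is Anosov precisely when $\chi(a)\neq 0$ for every $\chi$ occurring with $E_\chi\not\subset T\phi$; then $E^+=\bigoplus_{\chi(a)<0}E_\chi$ (or $>0$, depending on sign conventions) and $E^-$ the complementary sum. Granting this, if $a\in\A(\phi)$ then $ta\in\A(\phi)$ for every $t\neq 0$ because $\chi(ta)=t\chi(a)\neq 0$, and $E^\pm(ta)=E^\pm(a)$ for $t>0$; this gives that each ray through an Anosov element lies in $\A(\phi)$, and more generally the connected component of $a$ is the (open, convex) cone cut out by the sign conditions $\operatorname{sgn}\chi(\cdot)=\operatorname{sgn}\chi(a)$ over the finitely many relevant $\chi$. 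Finally, on a fixed connected component all these sign conditions are constant, hence $E^+$ and $E^-$ are the same sums of coarse Lyapunov bundles throughout the component, proving the last assertion.

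The main obstacle is the second step: I need the existence of a well-defined, finite family of \emph{linear} functionals governing the hyperbolic behaviour, uniformly over all of $\R^k$, together with the fact that the coarse Lyapunov bundles are continuous and independent of the particular Anosov element used to define them. For a single diffeomorphism this is just the spectral decomposition of a hyperbolic set, but for an $\R^k$-action one needs the Mather/Sacksteder-type multiplicative ergodic description or the structure theory of Anosov $\R^k$-actions; I would invoke \cite{Ba-Maq1} (and \cite{Arbieto}) for this, citing the statement that the Lyapunov decomposition is independent of the chosen Anosov element and that the exponents are linear functionals of the acting parameter. With that input the argument is purely formal: openness from cone stability, the cone from linearity $\chi(ta)=t\chi(a)$ and continuity of $\chi$, and constancy of the splitting on components from constancy of the signs $\operatorname{sgn}\chi$.

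A technical point I would be careful about: continuity of $\A(\phi)\ni b\mapsto d\phi^b$ is in the $C^0$ topology on bundle maps over the compact $M$, which is exactly what the cone criterion needs; I would not claim continuity of the splittings $E^\pm$ as $b$ ranges over $\A(\phi)$ beyond what the component-wise constancy gives, since across different components the dimensions of $E^\pm$ can jump.
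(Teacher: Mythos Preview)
The paper does not prove this lemma; it is stated with a citation to \cite{Ba-Maq1} as a known result, so there is no in-paper argument to compare against. Your sketch is essentially the standard route taken in that reference and is correct, with the caveat you already flag: the coarse Lyapunov decomposition into continuous subbundles indexed by finitely many \emph{linear} functionals $\chi$ is itself a nontrivial structural theorem for Anosov $\R^k$-actions (proved starting from one fixed Anosov element), so your second and third steps are reductions to that cited result rather than self-contained arguments. That is entirely appropriate here, since the paper treats the lemma as background.

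One simplification worth recording: the bare cone property does not need the Lyapunov machinery. If $a\in\A(\phi)$ with splitting $E^+\oplus T\phi\oplus E^-$ and constants $(C,\lambda)$, then for $t>0$ the identity $\phi^{s(ta)}=\phi^{(st)a}$ shows that $ta$ satisfies the estimates in Definition~\ref{anodef} with the same splitting and constants $(C,t\lambda)$; hence $\A(\phi)$ is stable under positive scaling, and together with openness each connected component is an open cone. The Lyapunov input (or, equivalently, the observation that $d\phi^b$ preserves the splitting for $a$ because the action is abelian, combined with uniqueness of hyperbolic splittings near a given one) is only genuinely needed for the final assertion that the splitting is constant on each component.
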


\begin{definition}\label{genkcontAnosov}
	A generalized $k$-contact action $\phi$ will be called a contact Anosov action, if some Reeb vector field $X_j$ defines an Anosov element of the induced contact action. 
\end{definition}
\begin{lemma}\label{ASOA}
	If $\phi:\R^k\to M$ is a contact Anosov action, then there exists $B \in GL(\R^k)$ such that every Reeb vector field is Anosov and they have the same invariant splitting $TM = T\phi\oplus E^+\oplus E^-$
\end{lemma}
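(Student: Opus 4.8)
\textbf{Proof proposal for Lemma \ref{ASOA}.}

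The plan is to combine the structural results about Anosov elements (Lemma \ref{structurelemma}) with the reparametrization freedom provided by Lemma \ref{repa}, so the argument splits into two independent tasks: first arrange that \emph{all} Reeb fields become Anosov simultaneously, then arrange that they share a common invariant splitting. For the first task, by Definition \ref{genkcontAnosov} some Reeb field $X_{j_0}$ is an Anosov element, so by Lemma \ref{structurelemma} the set $\mathcal A(\phi)$ is a nonempty open subset of $\R^k$ whose connected components are open cones. Fix a cone $\mathcal C \subset \mathcal A(\phi)$ containing $X_{j_0}$. I would pick $k$ linearly independent vectors $a_1,\dots,a_k$ lying inside $\mathcal C$ (possible since $\mathcal C$ is an open cone, hence has nonempty interior in $\R^k$), and let $B_0 \in GL(\R^k)$ be the matrix sending the standard basis to $(a_1,\dots,a_k)$. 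Under the identification of $\R^k$ with $\mathrm{Span}\{X_1,\dots,X_k\}$, the new framing $Y_i = \sum_j (B_0)_{ij} X_j$ consists of vectors all of which lie in $\mathcal C$, hence are Anosov elements; and by Lemma \ref{structurelemma} (Anosov elements in the same cone have the same invariant distributions) they automatically share a single splitting $TM = T\phi \oplus E^+ \oplus E^-$. So in fact the ``same invariant splitting'' part comes for free once we force all the Reeb fields into one cone.

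It remains to check that this change of framing is realized by a genuine generalized $k$-contact structure, i.e. that the $1$-forms $\eta = B_0\alpha$ (or rather the dual reparametrization) still define a generalized $k$-contact structure with the same splitting. This is exactly where Lemma \ref{repa} enters: since the contact action preserves a volume form (Remark \ref{volform}) and is Anosov, Corollary \ref{tra} gives that it is topologically transitive, so Lemma \ref{repa} applies and tells us the set of matrices $B$ for which $B\alpha$ is generalized $k$-contact is a nonempty Zariski-open subset $\Lambda \subset GL(\R^k)$. The Reeb vector fields of the structure $(M, B\alpha, TM = I\oplus F)$ are the framing of $I$ dual to $B\alpha$, which is $(B^{-1})^T$ applied to the original Reeb frame; conversely any framing of $I$ contained in $\mathcal C$ arises this way for a suitable $B \in GL(\R^k)$. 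The one genuine obstacle is therefore a \emph{compatibility} point: I need the cone $\mathcal C$ and the Zariski-open set $\Lambda$ to have nonempty intersection in the relevant sense — that is, I need a $B \in \Lambda$ whose associated Reeb frame lies inside $\mathcal C$.

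To overcome this, note that the set of $B \in GL(\R^k)$ whose dual Reeb frame $(B^{-1})^T(X_1,\dots,X_k)$ lies entirely in the open cone $\mathcal C$ is a nonempty open subset $\mathcal O$ of $GL(\R^k)$ (openness of $\mathcal C$ plus continuity of $B \mapsto (B^{-1})^T$; nonemptiness because $\mathcal C$ spans $\R^k$ and we can choose $B$ accordingly as in the first paragraph). A nonempty Zariski-open subset of $GL(\R^k)$ is dense in the usual topology and has full measure, so $\Lambda \cap \mathcal O \neq \emptyset$. Choosing any $B$ in this intersection, the structure $(M, B\alpha, TM = I\oplus F)$ is a generalized $k$-contact structure (by $B\in\Lambda$) whose Reeb vector fields all lie in $\mathcal C \subset \mathcal A(\phi)$ (by $B\in\mathcal O$), hence are all Anosov with the common invariant splitting $TM = T\phi\oplus E^+\oplus E^-$ guaranteed by Lemma \ref{structurelemma}. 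Since $T\phi = I$ is unchanged under reparametrization, this is the desired conclusion. I would expect the write-up to spend most of its words carefully justifying that $\mathcal O$ is nonempty and open and that Zariski-density forces the intersection to be nonempty; the rest is bookkeeping with the transpose-inverse relating frames to coframes.
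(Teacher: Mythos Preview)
Your proposal is correct and follows essentially the same route as the paper: use volume preservation plus Corollary~\ref{tra} to get topological transitivity, invoke Lemma~\ref{repa} for the Zariski-open set of admissible reparametrizations, and use Lemma~\ref{structurelemma} to put all Reeb fields into a single Anosov cone. The paper's proof is terser and simply says ``choose a reparameterization that puts every Reeb vector field in the same open cone'' without spelling out the compatibility step; your explicit argument that the nonempty open set $\mathcal O$ must meet the Zariski-dense set $\Lambda$ is exactly the justification the paper leaves implicit, and your transpose-inverse bookkeeping between frames and coframes is correct.
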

\begin{proof}
	As the action preserves the volume form $\alpha_1\wedge\dots\wedge\alpha_k\wedge d\alpha_j^n$, the action is topologically transitive. Thus, the previous lemma (Lemma \ref{repa}) implies that for almost any linear reparameterization of the action, the corresponding structure is still generalized $k$-contact. From Lemma \ref{structurelemma}, the set of Anosov elements is open, and moreover, the invariant splitting $TM = T\phi\oplus E^+\oplus E^-$ depends only on the choice of open cone, thus just choose a reparameterization that puts every Reeb vector field in the same open cone.
\end{proof}
\begin{definition}
	Let $(M,\alpha,TM = I\oplus F,\phi)$ be a contact Anosov action. The parameterization $\alpha$ will be called adapted if every Reeb vector field is Anosov.
\end{definition}
\begin{remark}
	The Lemma \ref{ASOA} proves that every generalized $k$-contact Anosov action admits an adapted parameterization. 
\end{remark}

\begin{remark}
	Another way to see this definition is to start with an Anosov $\R^k$-action $\phi$, consider the action as given by a family of commuting vector fields $X_j$, where each $X_j$ is the vector field associated with an Anosov element. Using the splitting $TM = T\phi\oplus E^+\oplus E^-$ we define the dual $1$-forms $\alpha_1,\dots,\alpha_k$, and we suppose that each of those forms have constant rank $2n+1$ and satisfies: 
	$$\alpha_1\wedge\dots\wedge \alpha_k\wedge d\alpha_j^n\enskip\text{is a volume form}.$$ 
\end{remark}

\begin{remark}
	Notice that $dim E^+ = dim E^- = n$. This follows from the fact that, for some fixed $j$, $d\alpha_j$ restricted to $E^+\oplus E^-$ is a $\phi$- invariant symplectic form, the hyperbolic dynamics will ensure that $E^\pm$ are Lagrangian subspaces.
\end{remark}

\begin{remark}
A contact Anosov action can not be a suspension of a $\Z^k$ Anosov action by diffeomorphisms. In fact, by Lemma \ref{transverse} there can be no global section transverse $\Lambda$ to the contact action.
\end{remark}

\section{The algebraic picture and main theorem}

First we recall some definitions from the classical theory of semisimple Lie algebras.

\begin{definition}
	Consider a real semisimple Lie algebra $\lieg$. A Cartan subspace $\liea$ of $\lieg$ is an abelian subalgebra, such that, for every $x\in \liea$, the linear map $ad(x)$ is hyperbolic (that is, it is $\R$-diagonalizable), and maximal for these properties. The rank of $\lieg$ is the dimension of $\liea$ and does not depends on the choice of Cartan Subspace.
\end{definition}
It is well known:
\begin{lemma}[\cite{helgason}]\label{cartansubpace}
Let $G$ be a semisimple Lie group with Lie algebra $\lieg$.If $\liea$ is a Cartan subspace of $\lieg$, then it's centralizer $Z_\lieg(\liea)$ can be written as
	$$Z_\lieg(\liea) = \liek\oplus\liea$$
	where $\liek$ is the Lie algebra of a compact subgroup $K\subset G$.
\end{lemma}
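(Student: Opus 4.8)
The plan is to deduce the statement from the restricted root space decomposition of $\lieg$, a classical construction for which I would refer to Helgason \cite{helgason}. First I would normalize the situation: fix a Cartan decomposition $\lieg = \liek_0\oplus\liep$ with Cartan involution $\theta$ chosen so that $\liea\subset\liep$. This is possible because any element $x$ with $\mathrm{ad}(x)$ hyperbolic is symmetric for a suitable $\theta$-twisted Killing form, and more generally any Cartan subspace in the sense of the excerpt is conjugate (by an inner automorphism of $\lieg$) to a maximal abelian subspace of the $\liep$-part of some Cartan decomposition; under this identification the two notions of maximality coincide. Granting this, $\liea$ is a maximal abelian subspace of $\liep$ and one has the restricted root space decomposition
$$\lieg = \lieg_0\oplus\bigoplus_{\lambda\in\Sigma}\lieg_\lambda,\qquad \lieg_\lambda=\{X\in\lieg : [H,X]=\lambda(H)X\ \ \forall H\in\liea\},$$
with $\Sigma\subset\liea^{*}\setminus\{0\}$ the finite set of restricted roots. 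By the very definition of $\lieg_0$ this gives $Z_\lieg(\liea)=\lieg_0$.

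Next I would split $\lieg_0$ using $\theta$. Since $\theta H=-H$ for $H\in\liea$ and $\theta$ is a Lie algebra automorphism, $X\in\lieg_0$ forces $\theta X\in\lieg_0$, so $\lieg_0$ is $\theta$-stable and hence decomposes into its $\pm1$ eigenspaces, $\lieg_0=(\lieg_0\cap\liek_0)\oplus(\lieg_0\cap\liep)$. Clearly $\liea\subset\lieg_0\cap\liep$; conversely if $Y\in\lieg_0\cap\liep$ then $Y$ commutes with every element of $\liea$, so $\liea\oplus\R Y$ is an abelian subspace of $\liep$, and maximality of $\liea$ gives $Y\in\liea$. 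Thus $\lieg_0\cap\liep=\liea$, and setting $\liek:=\lieg_0\cap\liek_0=Z_{\liek_0}(\liea)$ we obtain $Z_\lieg(\liea)=\liek\oplus\liea$. Finally, $\liek$ is a subalgebra of $\liek_0$ and is the Lie algebra of $K:=\{k\in K_0 : \mathrm{Ad}(k)|_{\liea}=\mathrm{id}\}$, a closed subgroup of the compact group $K_0$, hence compact; this is precisely the claimed decomposition.

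The main obstacle is the normalization carried out in the first paragraph, namely identifying the abstract notion of Cartan subspace (a maximal abelian subalgebra of ad-hyperbolic elements) with a maximal abelian subspace of the $\liep$-part of a Cartan decomposition; once this is in place, the rest is routine bookkeeping with the root space decomposition. Since the lemma is quoted from \cite{helgason}, an acceptable alternative is simply to cite from there both the identity $Z_\lieg(\liea)=\lieg_0$ and the decomposition $\lieg_0=\liem\oplus\liea$ with $\liem=Z_{\liek_0}(\liea)$, and then observe that $\liem$ integrates to the compact subgroup $Z_{K_0}(\liea)$.
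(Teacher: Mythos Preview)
The paper does not actually prove this lemma; it is stated with a citation to Helgason \cite{helgason} and no proof is given in the text. Your proposal is a correct and standard sketch of the argument one finds in that reference (restricted root decomposition, $\theta$-stability of $\lieg_0$, and maximality of $\liea$ in $\liep$), so it supplies what the paper simply imports.
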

\begin{remark}
    The notation $\liek$ for the compact part of the centralizer is not standard in the literature, where it is more common to denote it by $\mathfrak m$. Here, however, we follow the notations of T. Barbot and C. Maquera (\cite{Ba-Maq3}).
\end{remark}

The following definition was first given by Hans-Christoph Hof \cite{HOF}, a more modern approach was later given by A. Katok and R. Spatzier \cite{kat}.
\begin{definition}
The Weyl chamber action associated with a semisimple Lie group $G$ is the right action of a Cartan subspace on the quotient $G\slash K$. This action was called Weyl chamber flow by A. Katok and R. Spatzier.
\end{definition}

The following theorem  shows that if $\Gamma$ is a uniform lattice acting freely on $G\slash K$, then the Weyl chamber action descends to an Anosov action on the compact manifold $\Gamma\backslash G\slash K$.

\begin{theorem}[\cite{Ba-Maq3}]\label{theo44}
Let $G$ is a connected Lie group with Lie algebra $\lieg$,  $K$ is a compact subgroup of $G$ with Lie algebra $\liek$,  $\liea$ is an abelian subalgebra of $\lieg$, contained in the normalizer $N_\lieg\liek$ of $\liek$ and such that $\liek\cap\liea = \{0\}$ and let $\Gamma$ is a uniform lattice in $G$ acting freely on $G\slash K$. Under this conditions we have an right action of $\liea$ on $\Gamma\backslash G\slash K$. This action is Anosov if, and only if, there exists $x\in \liea$ and a $ad(x)-$invariant splitting 
$$\lieg = \liek\oplus\liea\oplus \mathcal S\oplus\mathcal U$$
such that the eigenvalues of the $ad(x)$  action on $\mathcal S$ (resp. $\mathcal U$) has negative (resp. positive) real part.
\end{theorem}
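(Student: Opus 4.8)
The plan is to trivialize $T(\Gamma\backslash G\slash K)$ by means of the homogeneous structure and thereby reduce the dynamical statement to a spectral statement about $\mathrm{ad}(x)$. For a point $p=\Gamma g K$ I identify $T_{p}(\Gamma\backslash G\slash K)$ with $\lieg/\liek$ through $Y+\liek\mapsto\frac{d}{ds}\big|_{0}\,\Gamma g\exp(sY)K$; replacing the representative $g$ by $gk$ changes this identification by $\mathrm{Ad}(k^{-1})$ on $\lieg/\liek$, which makes sense since $\mathrm{Ad}(K)$ preserves $\liek$. Because $\liea\subseteq N_{\lieg}\liek$, the endomorphism $\mathrm{ad}(x)$ descends to $\lieg/\liek$, and a one-line computation (pushing $\exp(sY)$ past $\exp(tx)$) shows that in this trivialization the derivative of the time-$t$ map of the $\liea$-action in the direction $x$ is the constant operator $\overline{e^{-t\,\mathrm{ad}(x)}}$; moreover the distribution $T\phi$ tangent to the orbits is the image $\overline{\liea}$ of $\liea$, which has dimension $k$ since $\liea\cap\liek=\{0\}$. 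I will assume $K$ connected (the general case needs only a short additional remark), and $M=\Gamma\backslash G\slash K$ is compact because $\Gamma$ is a uniform lattice. Write $\lieg=\lieg^{s}\oplus\lieg^{c}\oplus\lieg^{u}$ for the sum of the generalized eigenspaces of $\mathrm{ad}(x)$ whose eigenvalue has, respectively, negative, zero, and positive real part.

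The crucial lemma is $\liek\subseteq\lieg^{c}$, i.e. $\mathrm{ad}(x)|_{\liek}$ has purely imaginary spectrum for every $x\in\liea$. Here $\mathrm{ad}(x)|_{\liek}$ is a derivation of the compact Lie algebra $\liek=\liez(\liek)\oplus[\liek,\liek]$; on the semisimple ideal every derivation is inner and hence, being skew-symmetric for an invariant metric, has purely imaginary spectrum, while on the central torus $Z^{0}\subseteq K$ conjugation by $\exp(tx)$ is a continuous path in the discrete group $\Aut(Z^{0})$ starting at the identity, hence constant, so $\mathrm{ad}(x)$ vanishes on $\liez(\liek)$. Since also $\liea\subseteq\lieg^{c}$, this gives $\liek\oplus\liea\subseteq\lieg^{c}$, whence $\lieg/\liek=\overline{\lieg^{s}}\oplus\overline{\lieg^{c}}\oplus\overline{\lieg^{u}}$, with $\overline{\lieg^{c}}$ exactly the neutral subspace of $\overline{e^{-t\,\mathrm{ad}(x)}}$ and $\overline{\lieg^{s}},\overline{\lieg^{u}}$ the subspaces of uniform exponential growth and decay.

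For the implication ($\Leftarrow$): given an $\mathrm{ad}(x)$-invariant splitting $\lieg=\liek\oplus\liea\oplus\mathcal S\oplus\mathcal U$ with the stated sign conditions one has $\mathcal S\subseteq\lieg^{s}$, $\mathcal U\subseteq\lieg^{u}$ and, by the lemma, $\liek\oplus\liea\subseteq\lieg^{c}$; comparing dimensions in $\lieg=\lieg^{s}\oplus\lieg^{c}\oplus\lieg^{u}$ forces $\mathcal S=\lieg^{s}$, $\mathcal U=\lieg^{u}$ and $\lieg^{c}=\liek\oplus\liea$. Because the bracket of generalized eigenspaces lands in the generalized eigenspace for the sum of the eigenvalues, $[\lieg^{c},\lieg^{u}]\subseteq\lieg^{u}$ and $[\lieg^{c},\lieg^{s}]\subseteq\lieg^{s}$; as $\liek\subseteq\lieg^{c}$ this shows $\mathrm{ad}(\liek)$, hence $\mathrm{Ad}(K)$, preserves $\lieg^{u}$ and $\lieg^{s}$. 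Consequently $\overline{\lieg^{u}}$ and $\overline{\lieg^{s}}$ are $\mathrm{Ad}(K)$-invariant in $\lieg/\liek$, so they descend to globally defined smooth sub-bundles $E^{+}$, $E^{-}$ of $T(\Gamma\backslash G\slash K)$ that, together with $T\phi=\overline{\liea}$, make up a $d\phi^{tx}$-invariant splitting; since the real parts of the $\mathrm{ad}(x)$-eigenvalues on $\lieg^{u}$ (resp. $\lieg^{s}$) are bounded away from zero, $e^{-t\,\mathrm{ad}(x)}$ satisfies the exponential estimates of Definition \ref{anodef} in any $\mathrm{Ad}(K)$-invariant metric on $\lieg/\liek$, so $x$ is an Anosov element. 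For ($\Rightarrow$): if $\phi$ is Anosov with Anosov element $a$, then at every point $E^{+}$, $T\phi$, $E^{-}$ lie inside the contracting, neutral, and expanding subspaces $\overline{\lieg^{u}},\overline{\lieg^{c}},\overline{\lieg^{s}}$ of the constant operator $e^{-\overline{\mathrm{ad}(a)}}$, and, adding up to all of $\lieg/\liek$, must coincide with them; in particular $\overline{\liea}=T\phi=\overline{\lieg^{c}}$, i.e. $\lieg^{c}=\liek\oplus\liea$, so $\lieg=\liek\oplus\liea\oplus\lieg^{s}\oplus\lieg^{u}$ is the desired splitting with $x:=a$.

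The step I expect to be the real obstacle is the well-definedness in ($\Leftarrow$): a sub-bundle of $T(\Gamma\backslash G\slash K)$ built from a fixed subspace of $\lieg$ exists only when that subspace is $\mathrm{Ad}(K)$-invariant modulo $\liek$, which is not evident for $\lieg^{s}$ and $\lieg^{u}$. It is forced by combining the bracket identity $[\lieg^{c},\lieg^{u}]\subseteq\lieg^{u}$ with the lemma $\liek\subseteq\lieg^{c}$, and getting that lemma right — in particular the torus argument on $\liez(\liek)$ and the bookkeeping for non-connected $K$ — is the other delicate point. The rest (the trivialization, the formula $d\phi^{tx}=\overline{e^{-t\,\mathrm{ad}(x)}}$, and deducing hyperbolic estimates from a spectral gap) is routine.
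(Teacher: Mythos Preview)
The paper does not prove Theorem~\ref{theo44}: it is quoted from \cite{Ba-Maq3} and used as a black box, so there is no in-paper proof to compare against. Your argument is the standard one for this result and is essentially correct; what you have written would stand on its own as a proof sketch. A few comments on the points you flagged and one you did not:

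\begin{itemize}
\item The formula $d\phi^{tx}=\overline{e^{-t\,\mathrm{ad}(x)}}$ is exactly right with your conventions (right action, identification via $Y\mapsto\frac{d}{ds}\big|_{0}\Gamma g\exp(sY)K$), and your identification of the delicate step is accurate: the existence of globally defined sub-bundles $E^{\pm}$ hinges on the $\mathrm{Ad}(K)$-invariance of $\lieg^{s},\lieg^{u}$ modulo $\liek$, which you obtain from $\liek\subseteq\lieg^{c}$ together with $[\lieg^{c},\lieg^{s/u}]\subseteq\lieg^{s/u}$.
\item For the $(\Rightarrow)$ direction, when you say ``$E^{+},T\phi,E^{-}$ lie inside $\overline{\lieg^{u}},\overline{\lieg^{c}},\overline{\lieg^{s}}$'' you are implicitly using that exponential contraction/expansion is independent of the choice of Riemannian metric on the compact manifold $M$; this is true (any two metrics on $M$ are uniformly equivalent) but deserves a word, since your identification $T_{p}M\cong\lieg/\liek$ is only isometric for an $\mathrm{Ad}(K)$-invariant inner product. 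Once that is said, the dimension count forces equality and hence $\lieg^{c}=\liek\oplus\liea$.
\item Your torus argument for $\mathrm{ad}(x)|_{\liez(\liek)}=0$ is clean; note that it really uses compactness of $K$ (so that $\exp(\liez(\liek))$ is a torus), not just compactness of the Lie algebra type. The non-connected $K$ case is indeed a short add-on: $\mathrm{Ad}(K)$ still preserves $\lieg^{s},\lieg^{u}$ because $\mathrm{Ad}(K)$ commutes with $\mathrm{Ad}(\exp\liea)$ (since $\liea$ normalizes $\liek$ and conversely $K$ centralizes $\exp\liea$ up to the normalizer condition) --- one should check this carefully in the general setting of \cite{Ba-Maq3}, where it is handled.
\end{itemize}
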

 
 It is clear that for a Weyl chamber action, if we fix a Weyl chamber (notion of positive roots) the splitting into positive and negative rootspaces give us the desired  $\mathcal S,\mathcal U$ subspaces.\\

\subsection{An example: Definitions}
 We shall consider $SO(k,k+n)$ as the matrix subgroup of  $GL(\R^{2k+n})$ of elements that preserves the bilinear form $$\langle u,v\rangle = \sum_{i=1}^ku_iv_i - \sum_{i=k+1}^{2k+n}u_iv_i.$$
 
Using this identification, we can consider the Lie algebra $\mathfrak{so}(k,k+n)$ also as a matrix Lie algebra given by 

\begin{align}\label{sokknpresentation}
\mathfrak{so}(k,k+n) = \left\{
\left(
\begin{array}{ccc}
\mathfrak{so}(k)
&
C
&
X^t
\\
C^t
&
\mathfrak{so}(k)
&Z^t\\
X
&
Z
&
D
\end{array}
\right)
\;;\; 
C\in M_{k\times k};\;
X,Z	\in M_{n\times k};\;
D\in \mathfrak{so}(n)\right\}
\end{align}

Some computations shows that the set $\Ii \subset M_{k\times k}$ of diagonal matrices and embed on $\mathfrak{so}(k,k+n)$ in a natural way, that is
\begin{align}\label{cartansubspace}
\Ii = \left\{ \left(\begin{array}{ccc}
0&J&0\\
J&0&0\\
0&0&0
\end{array}\right)\in \mathfrak{so}(k,k+n)\;;\;J\in M_{k\times k}\;\;\text {is diagonal}
\right\}
\end{align}
is in fact a Cartan subspace for the Lie algebra $\mathfrak{so}(k,k+n)$. It is clear that $\Ii$ induces an action of $\R^k$ on $SO(k,k+n)$. The centralizer of $\Ii$ is in fact $\Ii\oplus\mathfrak{so(n)}$. Thus, this action descends to an action on $SO(k,k+n)\slash SO(n)$. Moreover, for any choice of Weyl chamber the corresponding $\mathcal S,\mathcal U$ spaces satisfy 
$$\mathcal E :=\mathcal S\oplus\mathcal U = \left\{
\left(
\begin{array}{ccc}
\mathfrak{so}(k)
&
C
&
X^t
\\
C^t
&
\mathfrak{so}(k)
&Z^t\\
X
&
Z
&
0
\end{array}
\right)
\;;\; 
C\in M_{k\times k}, diag (C)=0;\;
X,Z	\in M_{n\times k};\;
\right\}$$

\subsection{An example: The generalized $k$-contact structure}\label{secexam}

We will define specific left invariant forms on $SO(k,k+n)$, which by construction descends to left invariant forms on $SO(k,k+n)\slash SO(n)$. 

We consider the splitting $\lieg = \mathfrak(so(k,k+n)) = \mathfrak{so(n)}\oplus\Ii\oplus \mathcal E$. This splitting, allow us to immerse (by zero extensions) $ \Ii^*\subset \lieg^*$, and therefore,with left invariant 1-forms on $G$. Because $\Ii$ is a Cartan subspace, this extension is $SO(n)$-invariant and thus it descends to a left invariant form on   $G\slash K = SO(k,k+n)\slash SO(n)$.

Let $e_1,\dots,e_k$ be the canonical basis of $\Ii$, and $\alpha \in \Ii^*\subset\lieg^*$ which we consider as a 1-form on $G$. For left invariant vector fields $A,B$, we have

$$d\alpha(A,B) = -\alpha([A,B])$$

Thus, if we consider the distribution $E$ on $G$ induced by $\mathcal E\subset\lieg$, $d\alpha$ is non degenerate over $E$ if, and only if, the bilinear form $$\mathcal E\otimes\mathcal E \ni a\otimes b\mapsto -\alpha([a,b])$$
is non degenerate. Some calculations show us that:

\begin{itemize}\label{itens}
    \item $[\Ii,\lieg]\cap \Ii = 0$
    \item $[\mathcal E,\mathcal E]\cap \Ii = \Ii$
    \item $d\alpha$ is non degenerate over $\mathcal E\otimes\mathcal E$ if, and only if, $\alpha(e_i)\neq\pm\alpha(e_j)\neq 0$ for all $1\leq i,j\leq k$
\end{itemize}

Because the last condition is open, there exists a basis $\alpha_1,\dots,\alpha_k$ of $\Ii^*$ which satisfies this condition.

The first condition show us that $\Ii\subset \ker d\alpha_j$ for $1\leq j\leq k$.

It is clear that $\alpha_1,\dots,\alpha_k$ actually defines a left invariant generalized $k$-contact structure on $G\slash K$.

\begin{remark}
    The action constructed in our example does not comes from a left invariant $k$-contact structure in the sense of \cite{Bolle}.  In fact, the second condition means that, for $1$-forms $\alpha,\eta\in\Ii^*$, we have $d\alpha = d\eta \Leftrightarrow \alpha = \eta$ and thus, it is not possible to obtain a basis $\alpha_1,\dots,\alpha_k$ of $\Ii^*$ satisfying the conditions of the Definition \ref{genkcont}
\end{remark}

\subsection{An example: Computations}
On this subsection we make explicit the computations we hinted at the previous subsection, to construct the generalized $k$-contact structure on $SO(k,k+n)\slash SO(n)$.

Now, our goal is to define some specific left invariant one forms on $SO(k,k+n)\slash SO(n)$ and make some computations. 


We take $\alpha \in\Ii^*$, where $\Ii$ is the Cartan subspace given in \ref{cartansubspace}, and we consider $\alpha$ as a linear form on $\mathfrak{so}(k,k+n)$ by extending it to zero according to the obvious basis. By definition, this linear form is zero on $\mathfrak{so}(n)$ and also $SO(n)$-invariant\footnote{Just notice that $\Ii$ is in the normalizer of $\mathfrak{so}(n)$ and $\Ii\cap \mathfrak{so}(n) = \{0\}$}, thus, it defines a left invariant $1$-form on $SO(k,k+n)$ which descends to a $1$-form  on $SO(k,k+n)\slash SO(n)$.\\

In what follows, we will make frequent use of the different injections $M_{k\times k},\mathfrak{so}(k),M_{n\times k}\hookrightarrow \mathfrak{so}(k,k+n)$, and thus, we fix the following notation:
We denote by $C_{ij}\in M_{k\times k}$ the matrix whose only non zero coordinate is the $i,j$ coordinate, whose value is $1$ and, $E_{ij}=C_{ij}-C_{ji}$ for any $1\leq i\leq j\leq k$, we will denote
\begin{align*}
F_{ij} = \left(\begin{array}{ccc}
E_{ij}&0&0\\
0&0&0\\
0&0&0
\end{array}\right)\enskip\enskip;\enskip\enskip
G_{ij} = \left(\begin{array}{ccc}
0&0&0\\
0&E_{ij}&0\\
0&0&0
\end{array}\right)
\enskip\enskip;\enskip\enskip
H_{ij} = \left(\begin{array}{ccc}
0&C_{ij}&0\\
C_{ji}&0&0\\
0&0&0
\end{array}\right)
\end{align*}

We will also make no distinction between $X,Z\in M_{n\times k}$ and their images:
$$\left(\begin{array}{ccc}
0&0&X^t\\
0&0&0\\
X&0&0
\end{array}\right)
\enskip\enskip\text{ and }\enskip\enskip
\left(\begin{array}{ccc}
0&0&0\\
0&0&Z^t\\
0&-Z&0
\end{array}\right)
$$

We shall denote by $X_{ij},Z_{ij}\in M_{n\times k}$ the matrix with $1$ in the $i,j$ coordinate and zero in all others. 

We shall make our calculations on $SO(k,k+n)$. For left invariant vector fields $A,B$, we have $d\alpha(A,B) = -\alpha([A,B])$. Thus, for the purpose of our calculations, we are interested only in the diagonal portion of the $M_{k\times k}$ on $\mathfrak{so}(k,k+n)$, that is, for two given matrices $M_1,M_2\in \mathfrak{so}(k,k+n)$, in the expression of

$$[M_1,M_2] = \left(\begin{array}{ccc}
R_1&C&X^t\\
C^t&R_2&Z^t\\
X&Z&D
\end{array}\right), R_1,R_2\in \mathfrak{so}(k), D\in \mathfrak{so}(n), X,Z\in M_{n\times k}, C\in M_{k\times k}$$
we are only interested in the matrix $C$. With this notation, we shall write 
$$\llbracket  M_1,M_2 \rrbracket = C \tilde= \left(\begin{array}{cc} 0&C\\ C^t&0 \end{array}\right)$$

Moreover, as we are actually interested only in the diagonal elements of $C$, which can be written as linear combination of elements of $\Ii$,  we define
$$|\!\!\llbracket  M_1,M_2\rrbracket\!\!| = Diag\llbracket M_1,M_2\rrbracket = \sum_j a_jH_{jj}$$

Now, some computations: If we write

$$M_1 = \left(\begin{array}{ccc}
R_1&A&X^t\\
A^t&R_2&Z^t\\
X&Z&D
\end{array}\right)\enskip\text{and}\enskip
M_2 = \left(\begin{array}{ccc}
\tilde R_1&\tilde A&\tilde X^t\\
\tilde A^t&\tilde R_2&\tilde Z^t\\
\tilde X&\tilde Z&\tilde D
\end{array}\right)
$$
then
$$\llbracket  M_1,M_2 \rrbracket = R_1\tilde A + A\tilde R_2 + X^t\tilde Z - \tilde R_1 A - \tilde A R_2 - \tilde X^t Z$$

It is clear that for $\{s,t\}\neq\{i,j\}$ we have
$$|\!\!\llbracket  F_{ij},H_{st}\rrbracket\!\!|=|\!\!\llbracket  G_{ij},H_{st}\rrbracket\!\!|=0$$
moreover,
$$|\!\!\llbracket  
F_{ij},F_{st}\rrbracket\!\!|=|\!\!\llbracket  
G_{ij},G_{st}\rrbracket\!\!| = |\!\!\llbracket  H_{ij},H_{st}\rrbracket\!\!|=0$$

and therefore 
\begin{align*}
d\alpha^{2k(k-1)}&(F_{12},\dots,F_{k-1,k},G_{12},\dots,G_{k-1,k},H_{12},H_{21},\dots, H_{k-1,k},H_{k,k-1}) = \\
&=\pm\Pi_{i<j}d\alpha^2(F_{ij},G_{ij},H_{ij},H_{ji})
\end{align*}

Now, from $|\!\!\llbracket  F_{ij},G_{ij}\rrbracket\!\!|=0$ and $|\!\!\llbracket  H_{ij},H_{ji}\rrbracket\!\!|=0$, we obtain
\begin{align}\label{1111}
d\alpha^2(F_{ij},&G_{ij},H_{ij},H_{ji})=\\
 =2&d\alpha(F_{ij},H_{ij})d\alpha(G_{ij,}H_{ji}) - 2d\alpha(F_{ij},H_{ji})d\alpha(G_{ij,}H_{ij})\\
=2&\alpha(|\!\!\llbracket  F_{ij},H_{ij}\rrbracket\!\!|)\alpha(|\!\!\llbracket  G_{ij,}H_{ji}\rrbracket\!\!|)
-2
\alpha(|\!\!\llbracket  F_{ij},H_{ji}\rrbracket\!\!|)\alpha(|\!\!\llbracket  G_{ij,}H_{ij}\rrbracket\!\!|)
\end{align}

Now, 
\begin{align*}
|\!\!\llbracket  F_{ij},H_{ij}\rrbracket\!\!| & = |\!\!\llbracket  G_{ij,}H_{ji}\rrbracket\!\!|= -H_{jj}.\\
|\!\!\llbracket  F_{ij},H_{ji}\rrbracket\!\!| &=|\!\!\llbracket  G_{ij,}H_{ij}\rrbracket\!\!| =  H_{ii}
\end{align*}

Thus,
\begin{align}\label{1122}
d\alpha^2(F_{ij},G_{ij},H_{ij},H_{ji}) = 
2
\alpha(H_{jj})^2 - 2\alpha(H_{ii})^2
\end{align}
More computations:

$$\llbracket  X_{ij},Z_{st}\rrbracket = \delta_{is}C_{jt}$$
and thus,
$$|\!\!\llbracket  X_{ij},Z_{st}\rrbracket\!\!| = 0\enskip \enskip\text{if}\enskip j\neq t\text{ and }i\neq s$$

Moreover
$$
|\!\!\llbracket  X_{ij},X_{st}\rrbracket\!\!| = 
|\!\!\llbracket  Z_{ij},Z_{st}\rrbracket\!\!| = 0$$
So, 
\begin{align}\label{2222}
d\alpha^{kn}&(X_{11},Z_{11},\dots,X_{nk},Z_{nk}) = 
=\Pi_{\substack{1\leq s\leq n \\ 1\leq t\leq k}}d\alpha(X_{st},Z_{st}) \\
&= (-1)^{kn}\Pi_{\substack{1\leq s\leq n \\ 1\leq t\leq k}}\alpha(|\!\!\llbracket  X_{st},Z_{st}\rrbracket\!\!|)
=(-1)^{kn}\Pi_{1\leq t\leq k}\alpha(H_{tt})^n
\end{align}

Finally, from 
\begin{align*}
    |\!\!\llbracket  X_{ij},F_{st}\rrbracket\!\!| = 
|\!\!\llbracket  X_{ij},G_{st}\rrbracket\!\!| = |\!\!\llbracket  X_{ij},H_{st}\rrbracket\!\!| = 0\\
|\!\!\llbracket  Z_{ij},F_{st}\rrbracket\!\!| = 
|\!\!\llbracket  Z_{ij},G_{st}\rrbracket\!\!| = |\!\!\llbracket  Z_{ij},H_{st}\rrbracket\!\!| = 0
\end{align*}
It follows that
\begin{align*}
    d\alpha^{kn+2k(k-1)}(X,Z,F,G,H) &=\pm d\alpha^{kn}(X,Z)\Pi_{i<j}d\alpha^2(F_{ij},G_{ij},H_{ij},H_{ji})\\
    &=\pm\Pi_{\substack{i<j\\ 1\leq t\leq k}}\alpha(H_{tt})^n\big(
    2
\alpha(H_{jj})^2 - 2\alpha(H_{ii})^2\big)
\end{align*}

We conclude, that $d\alpha^{kn+2k(k-1)}\neq 0$ if, and only if, 
we choose an appropriate  $\alpha\in \Ii^*$ that satisfies 
\begin{align}\label{asd}
0\neq\alpha(H_{jj})\neq\pm \alpha(H_{ii})\enskip\enskip\forall i\neq j,
\end{align}

\subsection{General Cartan actions}
On this subsection we repeat what was done in the previous one, now for a general Cartan action. Explicitly, we want to prove the following Theorem \ref{geralsemi} bellow.

\begin{theorem}\label{geralsemi}
	Consider a real, connected, semisimple, non compact, Lie group $G$ with Lie algebra $\lieg$. Let $\liea$ be the Cartan subspace and $K\subset G$ the compact group associated with the compact part of the center of $\liea$. Consider also a uniform lattice $\Gamma$  in $G$ acting freely on $G\slash K$. Then there exists a (left invariant) generalized $k$-contact structure on $G\slash K$ such that the induced $k$-contact action is Anosov and it coincides with the Cartan action.
\end{theorem}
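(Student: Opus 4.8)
The plan is to mimic, in the abstract setting, the concrete computation carried out for $SO(k,k+n)$ in the previous subsections. Write $\lieg = \liek \oplus \liea \oplus \bigoplus_{\lambda \in \Delta} \lieg_\lambda$ for the restricted root space decomposition with respect to the Cartan subspace $\liea$, where $\Delta$ is the set of (nonzero) restricted roots and $\liek$ is the compact part of $Z_\lieg(\liea)$ as in Lemma \ref{cartansubpace}. Set $k = \dim\liea$ and let $\mathcal E = \bigoplus_{\lambda\in\Delta}\lieg_\lambda$, so that $\lieg = \liek\oplus\liea\oplus\mathcal E$ is an $\mathrm{Ad}(K)$-invariant splitting. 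Fix a basis $e_1,\dots,e_k$ of $\liea$; any $\alpha\in\liea^*$, extended by zero on $\liek\oplus\mathcal E$, is $\mathrm{Ad}(K)$-invariant (since $\liea$ normalizes $\liek$ and meets it trivially) and hence descends to a left-invariant $1$-form on $G/K$. For left-invariant fields one has $d\alpha(A,B) = -\alpha([A,B])$, so I need to control the $\liea$-component of brackets.

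The key algebraic facts to establish, generalizing the three bulleted items in Subsection~4.2, are: (i) $[\liea,\lieg]\cap\liea = 0$ (immediate, since $\mathrm{ad}(h)$ maps $\lieg_\lambda\to\lieg_\lambda$ and kills $\liek\oplus\liea$, and $\liea$ is not contained in any single $\lieg_\lambda$); (ii) $[\mathcal E,\mathcal E]\cap\liea = \liea$, which follows from semisimplicity/nondegeneracy of the Killing form: for each root $\lambda$, $[\lieg_\lambda,\lieg_{-\lambda}]$ contains the coroot-type vector $h_\lambda$ with $\alpha(h_\lambda) = \langle\alpha,\lambda\rangle$ (Killing pairing), and the $h_\lambda$ span $\liea$; (iii) for $\alpha\in\liea^*$, the skew form $\mathcal E\otimes\mathcal E\ni a\otimes b\mapsto -\alpha([a,b])$ restricted to $\mathcal E$ (i.e. to the projection onto $\liea$ followed by $\alpha$) is nondegenerate iff $\alpha$ avoids a certain union of hyperplanes in $\liea^*$; the natural pairing $\lieg_\lambda\times\lieg_{-\lambda}\to\liea$ is nondegenerate, and cross terms $[\lieg_\lambda,\lieg_\mu]$ with $\mu\neq -\lambda$ never land in $\liea$, so the form block-diagonalizes over the pairs $\{\lambda,-\lambda\}$ and each block is controlled by the number $\alpha(h_\lambda)\neq 0$. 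Hence $d\alpha|_{\mathcal E}$ is nondegenerate precisely when $\alpha(h_\lambda)\neq 0$ for all $\lambda\in\Delta$, which is an open, dense, nonempty condition on $\alpha\in\liea^*$. Choosing such an $\alpha$ and then, using that this is an open condition, a whole basis $\alpha_1,\dots,\alpha_k$ of $\liea^*$ satisfying it, item (i) gives $\liea\subset\ker d\alpha_j$ and item (iii) together with the splitting $T(G/K) = I\oplus F$ (with $I$ the distribution from $\liea$ and $F$ from $\mathcal E$) gives $\ker d\alpha_j = I$ exactly. Thus $(\Gamma\backslash G/K, \alpha_1,\dots,\alpha_k, T = I\oplus F)$ is a generalized $k$-contact structure by Definition~\ref{genkcont}, descending to the compact quotient since everything is left-invariant.

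Finally I must check that the induced contact action is Anosov and agrees with the Weyl chamber action. By construction the Reeb fields $X_j\in\Gamma(I)$ are the left-invariant fields corresponding to the dual basis of $\liea$, so the contact action is exactly the right $\liea$-action on $\Gamma\backslash G/K$, i.e. the Weyl chamber action. That this action is Anosov is Theorem~\ref{theo44} (equivalently \cite{Ba-Maq3}): pick a regular element $x\in\liea$ (one with $\lambda(x)\neq 0$ for all $\lambda\in\Delta$), and take $\mathcal S = \bigoplus_{\lambda(x)<0}\lieg_\lambda$, $\mathcal U = \bigoplus_{\lambda(x)>0}\lieg_\lambda$; then $\mathrm{ad}(x)$ is hyperbolic on $\mathcal S\oplus\mathcal U = \mathcal E$ with the correct signs, so $x$ is an Anosov element and $E^\pm$ are the distributions from $\mathcal S,\mathcal U$. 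One should also note, via Lemma~\ref{ASOA} / Lemma~\ref{repa}, that after a linear reparameterization one may assume every Reeb field is Anosov, so the structure is in fact a contact Anosov action in the sense of Definition~\ref{genkcontAnosov}; this uses that the action preserves the volume form $\alpha_1\wedge\cdots\wedge\alpha_k\wedge d\alpha_j^n$ and hence (Corollary~\ref{tra}) is topologically transitive.

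The main obstacle is item (iii): proving that nondegeneracy of $d\alpha|_F$ is governed exactly by the conditions $\alpha(h_\lambda)\neq 0$, which requires knowing that $[\lieg_\lambda,\lieg_\mu]$ meets $\liea$ only when $\mu = -\lambda$ and that the pairing $\lieg_\lambda\times\lieg_{-\lambda}\to\liea\xrightarrow{\alpha}\R$ is nondegenerate for generic $\alpha$ — this is where the structure theory of restricted roots (and the nondegeneracy of the Killing form on $\lieg_\lambda\oplus\lieg_{-\lambda}$) does the real work; everything else is bookkeeping with left-invariant forms and an appeal to Theorem~\ref{theo44}. A secondary subtlety is handling the multiplicities $\dim\lieg_\lambda$ (which need not be $1$ as in the $SO$ example), but since the relevant form is a sum over $\lambda$ of nondegenerate pieces and the even-dimensionality of $\mathcal E$ is forced by the $\lambda\leftrightarrow-\lambda$ pairing, this causes no essential difficulty.
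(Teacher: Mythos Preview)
Your argument is correct and is in fact cleaner than the paper's. The paper invokes Kammeyer's explicit rational basis (Theorem~\ref{kamthe}) for a real semisimple Lie algebra and verifies the nondegeneracy of $d\lambda|_{\mathcal N^+\oplus\mathcal N^-}$ by computing $d\lambda^N$ on that basis, tracking the bracket relations item by item; the open condition it isolates is that $\lambda$ not vanish on the distinguished vectors $H_\alpha^1,\hat H_\alpha^1\in\liea$. You bypass this entirely by using only the restricted root space decomposition and two standard Killing-form facts: that $B$ pairs $\lieg_\lambda$ with $\lieg_{-\lambda}$ nondegenerately, and that for $X\in\lieg_\lambda$, $Y\in\lieg_{-\lambda}$ the $\liea$-component of $[X,Y]$ equals $B(X,Y)A_\lambda$ (since $B(H,[X,Y])=\lambda(H)B(X,Y)$ and $B(\liea,\liek)=0$). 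This immediately gives the block-diagonalization of $d\alpha|_{\mathcal E}$ over the pairs $\lieg_\lambda\oplus\lieg_{-\lambda}$ and the open nondegeneracy condition $\alpha(A_\lambda)\neq0$ for all $\lambda\in\Delta$, handling arbitrary multiplicities $\dim\lieg_\lambda$ at no extra cost. What the paper's route buys is an explicit multiplication table (useful if one wants concrete formulas, as in the $SO(k,k+n)$ example), but for the existence statement your approach is shorter and more transparent; the two open conditions on $\alpha\in\liea^*$ are of the same nature (complements of finitely many hyperplanes) and the rest of the argument---descent to $G/K$ via $\mathrm{Ad}(K)$-invariance, identification of the Reeb fields with the $\liea$-action, and the appeal to Theorem~\ref{theo44}---is identical.
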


 Because we lack a concrete representation of the algebras involved, we use a structure theorem by Kammeyer, where he give an explicit multiplication table for real semisimple Lie algebras in terms of the root system. This multiplication table allow us to make similar computations. In fact, Kammeyer's proof give us the necessary computations, though he does not says it explicitly.
 
 The idea of the proof is to imitate the construction in section \ref{secexam}. More precisely, we shall construct a splitting $\lieg_0 =\liea\oplus\mathcal K\oplus\mathcal N^+\oplus\mathcal N^- $ where $\mathcal N^\pm$ corresponds to the eigenspaces of positive and negative roots, and $\mathcal K$ is the compact part of the centralizer of $\liea$. In particular, this decomposition will be invariant by the action of $\liea$. The next step is to construct the generalized $k$-contact structure, that is, we must find linearly independent 1-forms $\eta_1,\dots,\eta_k$ such that
			 \begin{enumerate}
			     \item $\eta_j$ is $\mathcal K$-invariant.
			     \item $\cap_j\ker(\eta_j) = \mathcal K\oplus\mathcal N^+\oplus\mathcal N^- $
			     \item $\liea\oplus\mathcal K\subset \ker(d\eta_j)$
			     \item $(d\eta_j^N)|_{\mathcal N^+\oplus\mathcal N^- }\neq 0$, where $N = dim(\mathcal N^\pm)$.
		\end{enumerate}
			 
			 Using the idea of section \ref{secexam}, we shall take $1$-forms on $\liea^*$ and extend them to $1$-forms on $\lieg_0$. By construction, this will imply conditions (1), (2) and (3).  Finally, Kammeyer's theorem give us the computations necessary to show that condition (4) is actually an open condition, and we can therefore choose appropriate $1$-forms $\eta_1,\dots,\eta_k$ such that they define a basis of $\liea^*$ and, therefore, give us the desired generalized $k$-contact structure.\\

 We recall some basic definitions of the theory of semisimple Lie algebras to establish some notations before we state Kammeyer's result and indicate the step of the proof that give us our calculations.


\begin{lemma}[\cite{helgason},\cite{kamm}]\label{real}
		Consider $\lieg^0$ a real semisimple Lie algebra, a Cartan involution $\theta$ on $\lieg^0$, and the corresponding Cartan decomposition $\lieg^0 = \liek\oplus\liep$.
		
		There exists a maximal abelian, $\theta$-stable subalgebra $\lieh^0\subseteq\lieg^0$ such that $\liea = \lieh^0\cap\liep$ is a maximal abelian subalgebra in $\liep$.
\end{lemma}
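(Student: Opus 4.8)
The plan is to start by choosing $\liea\subseteq\liep$ to be any maximal abelian subalgebra of $\liep$ (such subalgebras exist by a dimension argument, or one may cite \cite{helgason}), and then to enlarge it to the desired $\lieh^0$ by adjoining a maximal abelian subalgebra of the centralizer of $\liea$ inside $\liek$. The point is that all the $\theta$-stability is automatic once $\liea$ sits in $\liep$, and the only work is in checking maximality of the enlarged algebra.

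First I would record that $\liea$ is $\theta$-stable: since $\theta$ acts as $-\mathrm{id}$ on $\liep$ and $\liea\subseteq\liep$, we have $\theta(\liea)=\liea$. Consequently the centralizer $Z_{\lieg^0}(\liea)$ is $\theta$-stable, hence splits as $Z_{\lieg^0}(\liea) = \liem\oplus(Z_{\lieg^0}(\liea)\cap\liep)$, where $\liem := Z_{\lieg^0}(\liea)\cap\liek$. The second summand equals $\liea$ itself: if $X\in\liep$ centralizes $\liea$, then $\liea+\R X$ is an abelian subalgebra of $\liep$ containing $\liea$, so $X\in\liea$ by maximality. Thus $Z_{\lieg^0}(\liea)=\liem\oplus\liea$.

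Next, since $\liek$ is a compact Lie algebra (the Killing form of $\lieg^0$ is negative definite on it), so is its subalgebra $\liem$. Choose $\mathfrak{t}$ a maximal abelian subalgebra of $\liem$ and set $\lieh^0:=\mathfrak{t}\oplus\liea$. This is abelian, because $\mathfrak{t}$ and $\liea$ are each abelian and $\mathfrak{t}\subseteq\liem$ centralizes $\liea$; it is $\theta$-stable, because $\mathfrak{t}\subseteq\liek$ is fixed by $\theta$ and $\liea\subseteq\liep$ is negated by $\theta$; and $\lieh^0\cap\liep=\liea$, because a vector $T+A$ with $T\in\mathfrak{t}\subseteq\liek$ and $A\in\liea\subseteq\liep$ can lie in $\liep$ only if $T\in\liek\cap\liep=\{0\}$.

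It remains to verify maximality of $\lieh^0$ in $\lieg^0$. If $Y\in\lieg^0$ centralizes $\lieh^0$, then in particular it centralizes $\liea$, so $Y=Y_\liem+Y_\liea\in\liem\oplus\liea$; since $Y$ and $Y_\liea$ both centralize $\mathfrak{t}$ (the latter because $\mathfrak{t}\subseteq\liem$ centralizes $\liea$), so does $Y_\liem$, and because a maximal abelian subalgebra of a compact Lie algebra equals its own centralizer we get $Y_\liem\in\mathfrak{t}$, whence $Y\in\lieh^0$. The one step deserving care — and the only genuine input beyond bookkeeping — is this last self-centralizing property: it is where compactness of $\liek$, hence of $\liem$, is essential, and one invokes the standard fact that in a compact Lie algebra a maximal abelian subalgebra is a Cartan subalgebra which is its own centralizer. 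Alternatively, the whole statement may be quoted directly from \cite{helgason} or \cite{kamm}.
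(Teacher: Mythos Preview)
Your argument is correct and is essentially the standard construction found in the cited references (choose $\liea$ maximal abelian in $\liep$, then adjoin a maximal torus $\mathfrak{t}$ of its centralizer $\liem$ in $\liek$, and use that maximal abelian subalgebras of compact Lie algebras are self-centralizing to obtain maximality of $\lieh^0=\mathfrak{t}\oplus\liea$). The paper itself does not supply a proof of this lemma: it is stated as a citation from \cite{helgason} and \cite{kamm}, so your write-up in fact goes beyond what the paper records and matches the argument one finds in Helgason.
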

\begin{lemma}[Helgason,\cite{helgason}, Chapter XI]\label{Lemmahelg}
	Under the above notations, $\lieh^0$ is a Cartan subalgebra of $\lieg^0$ and $\liea$ is a Cartan subspace
	of $\lieg^0$.
\end{lemma}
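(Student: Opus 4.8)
The plan is to prove the two assertions in turn, starting with the (easier) claim that $\liea$ is a Cartan subspace. I would work throughout with the inner product $B_\theta(Y,Z) = -B(Y,\theta Z)$ attached to the Cartan involution $\theta$ ($B$ the Killing form), with respect to which $ad(X)$ is skew-symmetric when $X\in\liek$ and symmetric when $X\in\liep$; in particular every element of $\liep$ is $ad$-hyperbolic, and every element of $\liek$ has $ad$ with purely imaginary spectrum. Since $\liea\subseteq\liep$ is abelian, it is an abelian subalgebra of hyperbolic elements, so only maximality among such is at stake. For that, first record the restricted root decomposition at the Lie-algebra level: $Z_{\lieg^0}(\liea)$ is $\theta$-stable, and its intersection with $\liep$ is $\liea$ (by maximality of $\liea$ in $\liep$), so $Z_{\lieg^0}(\liea) = \liea\oplus\liem$ with $\liem := Z_{\liek}(\liea)$. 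Now if $\mathfrak{b}\supseteq\liea$ is abelian and consists of hyperbolic elements, any $x\in\mathfrak{b}$ lies in $Z_{\lieg^0}(\liea)$, so $x = a+m$ with $a\in\liea$, $m\in\liem$; since $a\in\liea\subseteq\mathfrak{b}$ we get $m = x-a\in\mathfrak{b}$, so $ad(m)$ is hyperbolic (real spectrum), while $m\in\liek$ forces it to have purely imaginary spectrum. Hence $ad(m)$ is diagonalizable with only the eigenvalue $0$, i.e.\ $ad(m) = 0$; so $m$ is central and therefore $m = 0$ by semisimplicity, and $x\in\liea$. Thus $\mathfrak{b} = \liea$ and $\liea$ is a Cartan subspace.

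For $\lieh^0$, use $\theta$-stability to write $\lieh^0 = \mathfrak{t}\oplus\liea$ with $\mathfrak{t} := \lieh^0\cap\liek$. The first step is that every $H = T+A\in\lieh^0$ ($T\in\mathfrak{t}$, $A\in\liea$) is $ad$-semisimple: $ad(T)$ is $B_\theta$-skew and $ad(A)$ is $B_\theta$-symmetric, so each is diagonalizable over $\C$, and they commute because $\lieh^0$ is abelian, hence are simultaneously diagonalizable over $\C$, and so is $ad(H) = ad(T)+ad(A)$. The second step is that $\lieh^0$ is a maximal abelian subalgebra of $\lieg^0$. I would first check that $\mathfrak{t}$ is maximal abelian in the compact Lie algebra $\liem = Z_{\liek}(\liea)$: an abelian $\mathfrak{t}'\supseteq\mathfrak{t}$ in $\liem$ produces the abelian, $\theta$-stable subalgebra $\mathfrak{t}'\oplus\liea$ containing $\lieh^0$, which must equal $\lieh^0$ by the maximality in Lemma \ref{real}, so $\mathfrak{t}' = \mathfrak{t}$. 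Since a maximal abelian subalgebra of a compact Lie algebra is self-centralizing, $Z_{\liem}(\mathfrak{t}) = \mathfrak{t}$. Now let $\mathfrak{b}\supseteq\lieh^0$ be abelian and $x\in\mathfrak{b}$; then $x$ commutes with $\liea$, so $x = a+m\in\liea\oplus\liem$ as above, and $x$ (as well as $a$) commutes with $\mathfrak{t}$, so $m = x-a\in Z_{\liem}(\mathfrak{t}) = \mathfrak{t}$; hence $x = a+m\in\liea\oplus\mathfrak{t} = \lieh^0$, i.e.\ $\mathfrak{b} = \lieh^0$.

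To finish, a maximal abelian subalgebra is automatically self-centralizing, so $Z_{\lieg^0}(\lieh^0) = \lieh^0$; combined with the $ad$-semisimplicity of $\lieh^0$, the standard weight-space argument (nonzero $ad(\lieh^0)$-weight vectors cannot lie in $\lieg_0 = Z_{\lieg^0}(\lieh^0)$) gives $N_{\lieg^0}(\lieh^0) = \lieh^0$; and since $\lieh^0$ is abelian, hence nilpotent, it is a Cartan subalgebra. I expect the real content to be the second step of the $\lieh^0$ argument, its maximality, which is exactly the point where one must combine the restricted root decomposition $Z_{\lieg^0}(\liea) = \liea\oplus\liem$ with the fact that maximal tori in compact Lie algebras are self-centralizing; the rest is formal once the symmetric/skew structure of $ad$ under $\theta$ is in hand. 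Since this is a classical fact (Helgason), in the write-up one may simply cite it; the above records the argument.
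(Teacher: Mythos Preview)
The paper does not prove this lemma at all; it is stated with a bare citation to Helgason, Chapter~XI, and used as a black box. Your argument is correct and is essentially the standard textbook proof: use the $B_\theta$-symmetry/skew-symmetry of $ad$ on $\liep$/$\liek$ to get semisimplicity and the spectrum dichotomy, split $Z_{\lieg^0}(\liea)=\liea\oplus\liem$, show $\mathfrak t$ is maximal abelian in $\liem$, and then pass from self-centralizing to self-normalizing via the weight-space decomposition. One cosmetic remark: the step ``a maximal abelian subalgebra of a compact Lie algebra is self-centralizing'' holds in any Lie algebra (if $X$ centralizes $\mathfrak t$ then $\mathfrak t+\R X$ is abelian), so the compactness of $\liem$ is not actually used there; everything else is exactly as you say, and since the paper itself only cites the result, your write-up already goes further than what is required.
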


\begin{definition}
   With the notations above, we define, for every linear functional $\alpha\in \liea^*$:
    $$\lieg^0_\alpha := \{x\in\lieg^0\;;\; [h,x]=\alpha(h)x \;\forall h\in\liea\}$$
    If $\lieg^0_\alpha$ is non zero, we say that $\alpha$ is a restricted root of $(\lieg^0,\liea)$ and  $\lieg^0_\alpha$ its restricted root space.
    
    The set of restricted roots will be denoted by $\Phi(\lieg^0,\liea)$.\\
    
    In a similar way, if $\lieg$ is a complex semisimple Lie algebra and $\lieh \subset\lieg$ is a Cartan subalgebra, for every $\alpha\in\lieh^*$ we define
     $$\lieg_\alpha := \{x\in\lieg\;;\; [h,x]=\alpha(h)x \;\forall h\in\lieh\}$$.
    
    If $\lieg_\alpha$ is non zero, $\alpha$ is called a root of $(\lieg,\lieh)$ e $\lieg_\alpha$ the associated root space. The set of roots will be denoted by $\Phi(\lieg,\lieh)$
\end{definition}

\begin{definition}
	The set $\{Y\in\lieh\;;\;\alpha(Y)\neq 0\;\;\forall\alpha\in\Phi(\lieg,\lieh)\}$ has a finite number of connected components called Weyl chambers. For a fixed choice of Weyl chamber $\mathcal W$, we can define 
	$$ \Phi^+(\lieg,\lieh) = \Phi^+(\lieg,\lieh, \mathcal W) = \{\alpha\in  \Phi(\lieg,\lieh)\;;\;\alpha(x)>0\;\;\forall x\in \mathcal W\}$$
	
	The roots on $\Phi^+(\lieg,\lieh)$ are called positive roots, with respect to $\mathcal W$, and we speak about "choice of positivity", meaning that a Weyl chamber was chosen and the positive roots are those that are positive with respect to this Weyl chamber.\\
	
	In a similar way, we can make a choice of positivity for restricted roots and define the set $\Phi^+(\lieg^0,\liea)$ of positive restricted roots.
\end{definition}

\begin{definition}
	A positive root (resp. restricted root) is called simple if it is not the sum of two other positive roots (resp. restricted root). The set of simple positive roots will be denoted by $\Delta(\lieg,\lieh)$, (resp. $\Delta(\lieg^0,\liea)$).
\end{definition}

Consider a real semisimple Lie algebra $\lieg^0$ with a Cartan involution $\theta$, and corresponding Cartan decomposition $\lieg^0 =\liek\oplus\liep$. Consider the $\theta$-stable Cartan subalgebra $\lieh^0$ obtained in Lemma \ref{real}. 	Let $\lieg = \lieg^0_\C$ the complexification of $\lieg^0$, then,  $\lieh = \lieh^0_\C$ (the complexification of $\lieh^0$)  is a  Cartan subalgebra of $\lieg$. There exists a unique linear extension of the Cartan involution $\theta$ to $\lieg$ which we will also denote by $\theta$. It is clear that $\lieh$ is $\theta$-stable.\\

\begin{remark}[Kammeyer, H.,\cite{kamm}]\label{Restri}
The terminology "restricted root" and "restricted root space" is justified by the following: Consider the inclusion map $j:\liea\to\lieh$. Let $\Sigma = \{\alpha\in\Phi(\lieg,\lieh)\;;\;j^*\alpha\neq 0\}$ be the set of roots which doesn't vanishes on $\liea$, then:
\begin{itemize}
    \item $\Phi(\lieg^0,\liea) = j^*\Sigma$

    \item For each $\beta\in \Phi(\lieg^0,\liea)$, if we denote $\Sigma_\beta = \{\alpha\in\Sigma\;;\;j^*\alpha = \beta\}$, then
    $$\lieg^0_\beta = \big(\bigoplus_{\alpha\in\Sigma_\beta}\lieg_\alpha\big)\cap\lieg^0$$
\end{itemize}

From here on, we will always choose positive roots and restricted roots such that $j^*\Phi^+(\lieg,\lieh) = \Phi^+(\lieg^0,\liea)\cup\{0\}$.
\end{remark}

\begin{lemma}[Wissen, \cite{Wis} Corollary 2.38]
With the above notations, if we consider the $\theta$ decomposition $\lieh^0 = \liek^0\oplus\liea$ ($\liek^0\subset\liek$ and $\liea\subset\liep$). Then every root $\alpha\in \Phi(\lieg,\lieh)$ is real valued on $\liea\oplus i\liek^0$.
\end{lemma}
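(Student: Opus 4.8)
The statement to prove is: with the notations set up (real semisimple $\lieg^0$, Cartan decomposition $\lieg^0 = \liek \oplus \liep$, $\theta$-stable Cartan subalgebra $\lieh^0 = \liek^0 \oplus \liea$, complexification $\lieg = \lieg^0_\C$, $\lieh = \lieh^0_\C$), every root $\alpha \in \Phi(\lieg,\lieh)$ is real-valued on $\liea \oplus i\liek^0$.

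Let me think about how to prove this.

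The Cartan subalgebra is $\lieh = \lieh^0_\C = (\liek^0 \oplus \liea)_\C = \liek^0_\C \oplus \liea_\C$. So a general element of $\lieh$ is a complex combination. The real form $\liea \oplus i\liek^0$ is a real subspace of $\lieh$ of real dimension equal to $\dim_\R \liea + \dim_\R \liek^0 = \dim_\C \lieh$. So it IS a real form of $\lieh$ — the "split real form" adapted to this situation. Actually this is a standard fact: $\liea \oplus i\liek^0$ is sometimes denoted $\lieh_\R$ and roots are real on it. Let me recall the argument.

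Standard approach: Roots take purely imaginary values on the compact real form. More precisely, in a compact real form $\mathfrak{u}$ of $\lieg$, the Cartan subalgebra $\lieh \cap \mathfrak{u} =: \lieh_{\mathfrak{u}}$ has the property that all roots are purely imaginary on it, i.e. real on $i\lieh_{\mathfrak{u}}$. Now I need to relate $\liea \oplus i\liek^0$ to $i$ times a Cartan subalgebra of a compact form. The Cartan involution $\theta$ gives $\lieg^0 = \liek \oplus \liep$; the compact real form is $\mathfrak{u} = \liek \oplus i\liep$ (where $i\liep$ means inside $\lieg = \lieg^0_\C$). Then $\lieh \cap \mathfrak{u}$: since $\lieh = \liek^0_\C \oplus \liea_\C$ and $\liek^0 \subset \liek$, $\liea \subset \liep$, we get $\lieh \cap \mathfrak{u} = \liek^0 \oplus i\liea$. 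Roots are purely imaginary on $\liek^0 \oplus i\liea$, hence real on $i(\liek^0 \oplus i\liea) = i\liek^0 \oplus (-\liea) = i\liek^0 \oplus \liea$ (as a real subspace, the sign is irrelevant). That gives exactly the claim.

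So the plan is:
\begin{enumerate}
\item Recall/verify that $\mathfrak{u} := \liek \oplus i\liep \subset \lieg = \lieg^0_\C$ is a compact real form of $\lieg$ (standard: follows from $\theta$ being a Cartan involution, the Killing form being negative definite on $\mathfrak{u}$).
\item Show $\lieh \cap \mathfrak{u} = \liek^0 \oplus i\liea$, using $\lieh^0 = \liek^0 \oplus \liea$ with $\liek^0 \subset \liek$, $\liea \subset \liep$, and that $\lieh = \lieh^0_\C$.
\item Invoke the standard fact that in a compact Lie algebra, the roots (w.r.t. any maximal torus) are purely imaginary-valued on the torus — equivalently, $\ad(H)$ is skew-Hermitian/has imaginary eigenvalues for $H$ in the compact Cartan subalgebra. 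Hence every $\alpha \in \Phi(\lieg,\lieh)$ is purely imaginary on $\lieh \cap \mathfrak{u} = \liek^0 \oplus i\liea$.
\item Conclude: $\alpha$ is real-valued on $i(\liek^0 \oplus i\liea) = i\liek^0 \oplus \liea$ (up to a harmless sign, real-valuedness on a real subspace is unchanged by multiplying the subspace by $-1$). Since $\alpha$ is $\C$-linear and $i\liek^0 \oplus \liea$ spans $\lieh$ over $\C$, this is the full statement.
\end{enumerate}

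The main (minor) obstacle is just assembling the standard structure-theory facts cleanly: identifying the compact real form $\mathfrak{u} = \liek \oplus i\liep$, checking the intersection with $\lieh$ computes to $\liek^0 \oplus i\liea$, and citing the "roots are imaginary on a compact Cartan subalgebra" lemma (this is in Helgason, or Knapp). No real computation is needed — it is a matter of correctly tracking which real form of $\lieh$ one is on. I would present it in two short paragraphs: first set up $\mathfrak{u}$ and compute $\lieh\cap\mathfrak{u}$, then apply the imaginary-eigenvalue fact and take $i$ times everything.
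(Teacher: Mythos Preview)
Your argument is correct and is the standard one: identify the compact real form $\mathfrak{u}=\liek\oplus i\liep$ of $\lieg$, compute $\lieh\cap\mathfrak{u}=\liek^0\oplus i\liea$, use that $\mathrm{ad}(H)$ has purely imaginary spectrum for $H$ in a compact Cartan subalgebra, and multiply by $i$.

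There is nothing to compare with on the paper's side: the paper does not prove this lemma at all but simply cites it from Wisser's thesis (Corollary~2.38). Your write-up would therefore supply a proof where the paper gives only a reference. The one place to be careful in the write-up is step~(2): the verification that $\lieh\cap\mathfrak{u}=\liek^0\oplus i\liea$ should be spelled out using the decomposition $\lieg=\lieg^0\oplus i\lieg^0$ and the fact that $\liek\cap\liep=0$, exactly as you sketch; otherwise a reader might not immediately see why no cross terms survive.
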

\begin{definition}
We say that a root $\alpha\in \Phi(\lieg,\lieh)$ is called
\begin{itemize}
    \item \textbf{Real:} If $\alpha$ is real valued on all $\lieh$, or equivalently, if $\alpha$ vanishes on $\liek^0$
    \item \textbf{Imaginary:} If $\alpha$ assumes purely imaginary values on all $\lieh$, equivalently, if $\alpha$ vanishes on $\liea$
    \item \textbf{Complex:} If $\alpha$ is neither Real nor Imaginary.
\end{itemize}
The sets of Real, Imaginary and Complex roots are denoted by $\Phi_\R$, $\Phi_{i\R}$ and $\Phi_\C$ respectively.
\end{definition}
\begin{remark}
Using the notations of Remark \ref{Restri}, it is easy to see that $\Sigma = \Phi_\C\cup\Phi_\R$.
\end{remark}

Denote by $\sigma$  the anti-linear complex automorphism of $\lieg$ given by the conjugation with respect to $\lieg^0$, that is,
		\begin{align*}
		\sigma:\lieg = \lieg^0\oplus i\lieg^0&\to \lieg^0\oplus i\lieg^0\\
		g = g_1 + ig_2&\mapsto \sigma(g)= g_1 - ig_2
		\end{align*}
It is clear that $\lieh$ is $\sigma$-stable, and thus, for $\lambda \in \lieh$, we can define $\lambda^\sigma(x) = \overline{\lambda(\sigma (x))}$. \begin{remark}[Kammeyer, \cite{kamm}, Subsection 3]
If $\alpha \in \lieh^*$ is a root then $\alpha^\sigma$ is also a root. We can thus construct a subset $\Phi^*(\lieg,\lieh)\subset \Phi(\lieg,\lieh)$  such that
$$\Phi^*(\lieg,\lieh)\cap \{\alpha,\alpha^\sigma\}\enskip\text{has cardinality 1 for every }\alpha\in \Phi(\lieg,\lieh)$$

For any set of roots $S\subset\Phi(\lieg,\lieh)$ , we denote $S^* = S\cap\Phi^*(\lieg,\lieh)$.
\end{remark}
	
\begin{theorem}[Kammeyer, \cite{kamm}, Theorem 4.1]\label{kamthe}
Let $\Delta(\lieg,\lieh)$ be the set of simple roots. There exists a partition $\Delta(\lieg,\lieh) =   \Delta^1\cup\Delta^0$ such that 
			$\mathcal H^1 = \{H_\alpha^1\;;\;\alpha\in \Delta^1\}$ and $\mathcal H^0 = \{H_\alpha^0\;;\;\alpha\in \Delta^0\}$ are basis of $\liea$ and $\lieh^0\cap\liek$
			 and a basis $\mathcal B$ of $\lieg^0$ given by
			\begin{align*}
			\mathcal B :=\{X_\alpha^0,X_\alpha^1\;;\; \alpha\in\Phi_{i\R}^+\cup\Phi_\C^*\}\cup\{Z_\alpha\;;\;\alpha\in\Phi_\R\}\cup\mathcal H^1\cup\mathcal H^2
			\end{align*}
			such that, 
			\begin{enumerate}
				\item $[H_\alpha^i,H_\beta^j] =0$ for every $\alpha,\beta$.
				\item $[H_\alpha^i,X_\beta^j] = c_{\alpha,\beta}^{ij}X_{\beta}^{i + j + 1}$.
				\item $[H_\alpha^i,Z_\beta  ] = d_{\alpha,\beta}^iZ_\beta$.
				\item $[Z_\alpha,Z_{-\alpha}] = -sgn(\alpha)\hat H^1_\alpha$  where $2\hat H_{\alpha}^1$ is a non zero $\Z$-linear combination in $\mathcal H^1$.
				\item\label{asss} $[X_\alpha^i,X_{-\alpha}^j] = (-1)^{ij}H_\alpha^{1 + i + j}$, for $\alpha\in\Phi_{\C}^*$, where $2H_{\alpha}^0$ and $H_\alpha^1$ are non zero $\Z$-linear combinations in $\mathcal H^0$ and $\mathcal H^1$ respectively.
				\item\label{assss} $ [X_\alpha^0,X_\alpha^1] = \tilde H_\alpha^0$, for $\alpha \in \Phi_{i\R}^+$, where $\tilde H_\alpha^0$ is a nonzero $\Z$ linear combination of elements $H_\beta^0$, where $\beta\in\Delta^0\cap\Delta_{i\R}$.
			
				\item For $\beta \not\in\{-\alpha,-\alpha^\sigma\}$; $[X_\alpha^i,X_\beta^j] = (-1)^{ij}\gamma_{\alpha,\beta}X^{i+j}_{\alpha + \beta} + sgn(\alpha)\gamma_{\alpha^\sigma,\beta}X^{i+j}_{\alpha^\sigma + \beta}$.
				\item $[Z_{\alpha},X_{\beta}^i] = [X_{\alpha}^{\frac{sgn(\alpha) -1}{2}},X_{\beta}^i]$.
				\item $[Z_\alpha,Z_{\beta}] = \hat Z_{\alpha,\beta}$ where $\hat Z_{\alpha,\beta}$ is some half integer combination of $Z_{\alpha + \beta}$, $Z_{\alpha^\sigma + \beta}$ and $Z_{\alpha + \beta^\sigma}$.
			\end{enumerate}
		
			and the constants $c_{\alpha,\beta}^{ij}$, $d_{\alpha,\beta}^i$ and $\gamma_{\alpha,\beta}$ are non zero half integers and satisfy.
			\begin{enumerate}
			    \item[(i)] $d^0_{\alpha,\beta}=0$ and $d_{\alpha,\beta}^1=d_{\alpha,-\beta}^1\neq 0$
			    \item[(ii)] $c_{\alpha,\beta}^{1j} \neq 0$ if $\beta\not\in \Phi_{i\R}$.
			    \item[(iii)] $c_{\alpha,\beta}^{1j} = 0$ if $\beta\in \Phi_{i\R}$.
			    
			    \item[(iv)] $c_{\alpha,\beta}^{0j} = -c_{\alpha,\beta}^{0,j+1}\neq 0$
			
				\item[(v)] $\gamma_{\alpha,\beta} = \gamma_{-\alpha,-\beta}$
				\item[(vi)] $\gamma_{\alpha,\beta} = \pm (r+1)$ where $r$ is the largest integer such that $\beta -r\alpha\in\Phi(\lieh,\lieg)$
				
			\end{enumerate}
			
			Moreover, there exists an involution $\tau$ of $\lieg$ such that, if we denote  
			\begin{align*}
		\mathcal N^+ &:= 
		        Span_\R\{X_\alpha^i,Z_\beta, \alpha\in \Phi_{\C}^{*+},\beta \in\Phi_{\R}^+\}\\
		\mathcal K&:= 
			    Span_\R\{\mathcal H^0\}\oplus
			    Span_\R\{X_\alpha^0,X_\alpha^1\;;\;\alpha\in\Phi_{i\R}^+\}\\
		\mathcal K'&:=
		    	Span_\R\{X^i_\alpha +(-1)^i X^i_{-\alpha},Z_\gamma +  Z_{-\gamma}, \alpha\in \Phi_\C^{*+},\gamma \in \Phi_\R^+\}\\
		\tilde{\mathcal K}&:=
		        \mathcal K\oplus \mathcal K'
			\end{align*}
			then
			$$\lieg^0 = \tilde {\mathcal K}\oplus\liea\oplus\mathcal N^+$$
			is a Iawasawa decomposition of $\lieg^0$.
		
		\end{theorem}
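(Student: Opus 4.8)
The plan is to deduce the theorem from the classical Chevalley basis of the complexification $\lieg=\lieg^0_\C$, and then to decompose that basis into real and imaginary parts dictated by the conjugation $\sigma$. First I would fix a Chevalley basis $\{X_\alpha\}_{\alpha\in\Phi(\lieg,\lieh)}\cup\{H_\alpha\}_{\alpha\in\Delta(\lieg,\lieh)}$ of $\lieg$ relative to $\lieh$, so that $[H,X_\alpha]=\alpha(H)X_\alpha$, $[X_\alpha,X_{-\alpha}]=H_\alpha$, and $[X_\alpha,X_\beta]=N_{\alpha,\beta}X_{\alpha+\beta}$ with integer constants $N_{\alpha,\beta}=\pm(r+1)$, $r$ the largest integer with $\beta-r\alpha\in\Phi(\lieg,\lieh)$, and the usual antisymmetry $N_{-\alpha,-\beta}=-N_{\alpha,\beta}$. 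These already encode properties (v) and (vi) before any real structure is introduced.

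The second step is to understand $\sigma$. Since $\sigma$ fixes $\lieh^0=(\lieh^0\cap\liek)\oplus\liea$ pointwise, one gets $\sigma\lieg_\alpha=\lieg_{\alpha^\sigma}$, and comparing the values of $\alpha$ on $\liea$ and on $i(\lieh^0\cap\liek)$ shows $\alpha^\sigma=\alpha$ precisely for real roots, $\alpha^\sigma=-\alpha$ for imaginary roots, and $\alpha^\sigma\notin\{\pm\alpha\}$ for complex roots, which reproduces $\Sigma=\Phi_\C\cup\Phi_\R$. The key normalization lemma is that the Chevalley basis can be chosen so that $\sigma X_\alpha=\epsilon_\alpha X_{\alpha^\sigma}$ with $\epsilon_\alpha\in\{\pm1\}$. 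This rests on the standard fact that there is a compact real form $\mathfrak u$ whose conjugation $\tau_0$ commutes with $\sigma$, with $\theta=\sigma\tau_0$ a Cartan involution; aligning the Chevalley basis with $\mathfrak u$ forces $\sigma$ to act on root vectors by signs, and composing $\sigma$ with the Chevalley involution produces the involution $\tau$ of the statement, whose $+1$-eigenspace will be $\tilde{\mathcal K}$.

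With $\sigma$ controlled I would build the real basis $\mathcal B$ by taking, in each case, the $\sigma$-fixed real points of the appropriate $\sigma$-stable sum of root spaces: for a real root $\alpha$ the line $\lieg_\alpha$ is $\sigma$-stable, giving one real vector $Z_\alpha$; for an imaginary root $\alpha$ (so $\alpha^\sigma=-\alpha$) the plane $\lieg_\alpha\oplus\lieg_{-\alpha}$ is $\sigma$-stable with a two-dimensional real form spanned by a real part $X_\alpha^0$ and an imaginary part $X_\alpha^1$; and for $\alpha\in\Phi_\C^*$ the plane $\lieg_\alpha\oplus\lieg_{\alpha^\sigma}$ contributes $X_\alpha^0,X_\alpha^1$ in the same way. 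The superscript $i\in\{0,1\}$ is exactly this real/imaginary-part index, so the parity arithmetic $i+j+1\pmod 2$ in relation (2) and the signs $(-1)^{ij}$ in (5) and (7) are nothing but the rule for multiplying real and imaginary parts. Substituting these definitions into the complex relations and collecting real and imaginary components then yields (1)--(9) mechanically; the constants $c^{ij}_{\alpha,\beta},d^i_{\alpha,\beta},\gamma_{\alpha,\beta}$ are the resulting half-integer combinations of the $N_{\alpha,\beta}$ and of the coroot values $\alpha(H_\beta)$, and their vanishing patterns (i)--(iv) follow from whether $\beta$ is imaginary (making the $\alpha^\sigma$-contribution cancel or reinforce) together with $N_{-\alpha,-\beta}=-N_{\alpha,\beta}$.

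Finally, for the Iwasawa statement I would take $\mathcal N^+$ to be the span of the positive real-root vectors $Z_\beta$ and the complex vectors $X_\alpha^i$ with $\alpha\in\Phi_\C^{*+}$; since the sum of two positive restricted roots is again a positive restricted root, relation (7) shows $\mathcal N^+$ is a nilpotent subalgebra, and it is precisely the sum of the positive restricted-root spaces of $\lieg^0$ in the sense of Remark \ref{Restri}. With $\liea$ the maximal abelian subspace of $\liep$ from Lemma \ref{real} and $\tilde{\mathcal K}=\mathcal K\oplus\mathcal K'$ the $\theta$-fixed (compact) subalgebra $\liek$ read off from the basis, the equality $\lieg^0=\tilde{\mathcal K}\oplus\liea\oplus\mathcal N^+$ follows by the dimension count furnished by the restricted-root space decomposition $\lieg^0=Z_{\lieg^0}(\liea)\oplus\bigoplus_{\beta\in\Phi(\lieg^0,\liea)}\lieg^0_\beta$. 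The main obstacle I anticipate is precisely the normalization lemma of the second step: exhibiting a single Chevalley basis simultaneously compatible with the root grading, with $\theta$, and with $\sigma$ so that $\sigma$ acts purely by signs, and then controlling these signs coherently along every root string so that the derived constants are genuinely real half-integers obeying (i)--(vi). Once that coherent sign-cocycle is pinned down, the passage to real and imaginary parts and the verification of (1)--(9) are routine.
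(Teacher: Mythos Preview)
The paper does not give its own proof of this statement: Theorem~\ref{kamthe} is quoted from Kammeyer \cite{kamm} and used as a black box in the proof of Theorem~\ref{geralsemi}, with only the brief Remark following it noting that the nonvanishing of $H_\alpha^0$, $H_\alpha^1$, $\hat H_\alpha^1$ can be read off from Kammeyer's constructive argument. So there is no in-paper proof against which to compare your proposal.

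That said, your outline is exactly the strategy Kammeyer employs: start from a Chevalley basis of the complexification, analyze how the conjugation $\sigma$ permutes root spaces according to the trichotomy real/imaginary/complex, normalize the basis so that $\sigma X_\alpha=\epsilon_\alpha X_{\alpha^\sigma}$ with signs $\epsilon_\alpha$, and then pass to real and imaginary parts to obtain the vectors $Z_\alpha$, $X_\alpha^0$, $X_\alpha^1$. The parity bookkeeping you describe for the superscripts and the $(-1)^{ij}$ signs is precisely how relations (1)--(9) arise from the complex Chevalley relations, and the Iwasawa decomposition at the end follows, as you say, from identifying $\mathcal N^+$ with the sum of positive restricted-root spaces. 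You have also correctly identified the genuine technical point: establishing a Chevalley basis simultaneously compatible with $\sigma$ and $\theta$ so that the sign-cocycle $\epsilon_\alpha$ is coherent along root strings. This is the substantive content of Kammeyer's paper, and your sketch does not resolve it but only names it; if you were to write this up as a self-contained proof, that normalization lemma is where the real work lies.
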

			\begin{remark}
				On items \ref{asss} and \ref{assss} on Theorem \ref{kamthe} above, we have stated that $\mathcal H^0$, $\mathcal H^1$, $\mathcal H^0_\alpha$ and $\mathcal H^1_\alpha$ are non zero. These affirmations were not stated explicitly on Kammeyer's paper (\cite{kamm}). They are, however, a clear consequence of the proof. Those vectors are obtained by constructive methods, and a careful analysis of the construction show them to be non zero.
			\end{remark}
			
			We are now ready to prove Theorem \ref{geralsemi}

			\begin{proof}[Proof of Theorem \ref{geralsemi}]
		
		 As we remarked before, Theorem \ref{theo44} implies that our action is Anosov, moreover, almost\footnote{Here, almost every element means that the complement is a finite union of hyperplanes.} every element of $\liea$ is Anosov.  Thus it remains to choose the $1$-forms.\\

			 First let us notice that Kammeyer's theorem give us, the restricted root system $\Phi(\lieg^0,\liea)$. That is, for every $\beta\in \Phi_{i\R}^+\cup\Phi_\C^*$, $\gamma \in \Phi_\R$, $j\in\{0,1\}$,  we define linear functionals $\lambda_\beta^j,\lambda_\gamma:\liea\to \R$ by:
			 \begin{align*}
			   \lambda^j_\beta(H_\alpha^1) &= c_{\alpha,\beta}^{1j}\\
			   \lambda_\gamma(H_\alpha^1) &= d_{\alpha,\beta}^{1}\enskip\enskip\enskip\forall \alpha\in\Delta^1.
			 \end{align*}
		
It is clear that $\lambda_\beta^j,\lambda_\gamma$ are restricted roots, with corresponding restricted rootspaces generated by $X_\beta^j$ and $Z_\gamma$. 
Moreover, as $\lieg^0 = \tilde{\mathcal K}\oplus\liea\oplus\mathcal N^+$
			is a Iawasawa decomposition of $\lieg^0$, it follows, that, we can choose a notion of positivity such that 
			$$\Phi^+(\lieg^0,\liea) = \{\lambda^j_\beta,\lambda_\gamma\;;\;\beta\in\Phi_\C^{*+},\gamma\in\Phi_\R^+\}$$
			
			Let us denote $$\Phi^\pm(\lieg^0,\liea) = \{\lambda_\beta^i,\lambda_\gamma, \beta\in \Phi_{\C}^{*\pm},\gamma \in\Phi_{\R}^\pm\}$$
		
			We have the restricted root space decomposition 
			\begin{align}\label{decom}
			\lieg^0 = \overbracket{\lieg_0^0}^{\liea\oplus\mathcal K}\oplus \overbracket{\bigoplus_{\lambda\in\Phi^+ }\lieg_\lambda^0}^{\mathcal N^+}\oplus\overbracket{ \bigoplus_{\lambda\in\Phi^- }\lieg_\lambda^0}^{\mathcal N^-}
			\end{align}
			As we remarked on Lemma \ref{Lemmahelg}, $\liea$ is a Cartan subspace, moreover, from Kammeyer's theorem, $\mathcal K$ is contained in the centralizer of $\liea$. As $\lieg_0^0 = \liea\oplus\mathcal K$ it follows that $\mathcal k$ is in fact the compact part of the centralizer of $\liea$.\\
			
			We are now, ready to define the  linear functionals.\\

			Take $\lambda\in \liea^*$. We can use the decomposition (\ref{decom}) to extend $\lambda$ to a linear functional on $\lieg^0$, and therefore we can understand it as a left invariant $1$-form on $G$ (the Lie group with Lie algebra $\lieg^0$ we started with). Moreover, as the Lie algebra $\mathcal K$ is in the centralizer of $\liea$, than $\lambda$ is right invariant by $K\subset G$ (the Lie subgroup corresponding to $\mathcal K$) and, therefore, can be seen as a left invariant $1$-form on $G\slash K$. Because it is left invariant, it pass on to the quotient $\Gamma\backslash G\slash K$. \\
			
			Notice that for such $\lambda$ the items (1), (2) and (3) of Kammeyer's theorem ensures that, for any $Z\in \mathcal B$ we have $[\mathcal H^0\oplus\mathcal H^1,Z]\cap\liea = \{0\}$. From (6), (8) and (7), it follows, that for $\beta\in \Phi_{i\R}^+$ we also have $[X_\beta^j,Z]\cap\liea = \{0\}$
			and thus, for any $Y\in \lieg_0^0$
			$$d\lambda(Y,Z) = -\lambda([Y,Z])=0$$
			that is $\lieg_0^0\subset\ker d\lambda$.\\

			Now we  want conditions to show that the inequality above is actually an equality, that is $\ker (d\lambda) = \lieg_0^0 = \liea\oplus \mathcal K$ and thus is non degenerate on $\mathcal N^+\oplus\mathcal N^-$. We just have to choose $\lambda$ such that
			\begin{itemize}
			    \item [(A)] $\lambda$ does not vanishes on $H^1_\alpha$, $\hat H^1_\alpha$\footnote{Such $\lambda$ does exists, for $H^1_\alpha$, $\hat H^1_\alpha$ are always non zero and are finite in number.}.
			\end{itemize} 
			
		Let us denote $X^i= (X^i_\alpha)_{\alpha\in \Phi_\C^*}$ and $Z = (Z_\beta)_{\beta\in\Phi_\R}$. Much like in the example, we want to compute $d\lambda^N(X^0,X^1,Z)$ where $N$ is de dimension of $\mathcal N^+$. Following the computations of the example, we denote $|\!\!\llbracket A,B\rrbracket\!\!|$ the projection of $[A,B]$ into the Cartan subspace $\liea$. It follows from Kammeyer's theorem that
\begin{align*}
    |\!\!\llbracket  Z_\alpha,X_{\beta}^i\rrbracket\!\!|&=0\\
    |\!\!\llbracket  Z_\alpha,Z_{\beta}\rrbracket\!\!|&=0\enskip\enskip \alpha\neq -\beta\\
    |\!\!\llbracket  X_\alpha^i,X_{\beta}^j\rrbracket\!\!|&=0 \enskip\enskip \alpha\neq -\beta\\
    |\!\!\llbracket  X_\alpha^i,X_{\alpha}^j\rrbracket\!\!|&=0 \enskip\enskip i\neq j
\end{align*}
From this computations, it follows that
$$
    d\lambda^N(X^0,X^1,Z)=\pm\Pi_{\substack{\beta \in \Phi_\C^{*}\\ \gamma \in \Phi_\R}}
    d\lambda(X_\beta^0,X_{-\beta}^0)d\lambda(X_\beta^1,X_{-\beta}^1)d\lambda(Z_\gamma,Z_{-\gamma})
$$

But from condition $(A)$ we have
\begin{align*}
     d\lambda(X_\beta^0,X_{-\beta}^0)&=\lambda(H_\beta^1)\neq 0\\
     d\lambda(X_\beta^1,X_{-\beta}^1)&=-\lambda(H_\beta^1)\neq 0\\
     d\lambda(Z_\gamma,Z_{-\gamma})&=-sgn(\gamma)\lambda(\hat H_\gamma^1)\neq 0
\end{align*}
and thus $d\lambda^N(X^0,X^1,Z)\neq 0$ as we desired.

			Notice that the condition (A) is open, therefore, we can choose a basis $\eta_1,\dots,\eta_k$ of $\liea^*$ satisfying it. From previous considerations, it is clear that such 1-forms descends to left invariant $1$-forms on $G\slash K$ which then defines a generalized $k$-contact structure on $\Gamma\backslash G\slash K$ for any uniform lattice $\Gamma$.
		\end{proof}

	\section{General Algebraic actions}
	
	On this section we will use Barbot-Maquera's \cite{Ba-Maq3} classification of algebraic Anosov actions to extend our previous result to a more general class of algebraic actions.
	
	First recall a couple of definitions that tell us how to construct new algebraic actions:
	
	\begin{definition}\label{def1}
	    Let $(G,K,\Gamma,\mathfrak a)$ be an Weyl chamber action, and suppose that $K$ is not semisimple, then, it's Lie algebra $\liek$ splits as
	    $$\liek = \mathcal {T_e}\oplus \liek'$$
	    where $\mathcal {T_e}$ is an abelian ideal (in fact the nilradical), and $\liek^*$ a compact semisimple Lie algebra (in fact the Levi part of $\liea\oplus \liek$). 
	    
	    Now we consider $\liek'\subset\liek$  any subalgebra which contains the Levi factor $\lieg^*$ and $K'\subset K$ the associated subgroup. Let $\liea^*\subset \liek$ any supplementary Lie algebra for $\liek'$ and denotes by $\liea' = \liea\oplus\liea^*$. 
	    
	    A modified Weyl chamber action is the algebraic action given by $(G,K',\Gamma,\liea')$.
	\end{definition}
	
	\begin{definition}\label{def2}
	    Let  $(G',K',\Gamma',\mathfrak a')$ be an algebraic actions. It is said to be a central extension of $(G,K,\Gamma,\mathfrak a)$ if 
	    \begin{itemize}
	        \item{}  We have a central exact sequence $$1\to H_0 \to G'\stackrel{p}{\to} G\to 1$$
	        where the Lie algebra $\lieh_0$ of $H_0$ is contained in $\liea'$.
	        \item{} $p(K') = K$
	        \item{} $p(\Gamma') = \Gamma$
	        \item{} $p_*\liea' = \liea$
	    \end{itemize}
	\end{definition}
	
	We can now state a simpler version of  C. Maquera and T. Barbot's classification theorem:
	\begin{theorem}
	    Let  $(G,K,\Gamma,\mathfrak a)$ be an algebraic Anosov action. Then,
	    \begin{itemize}
	        \item either $(G,K,\Gamma,\mathfrak a)$ is commensurable\footnote{Commensurability is an equivalence relation between algebraic actions introduced by C. Maquera and T. Barbot that implies that implies that, up to finite coverings, they are conjugated one to another. For details, see \cite{Ba-Maq3}} to a central extension over a (modifyed) Weyl chamber action (this happens if $G$ is reductive)
	        \item either $(G,K,\Gamma,\mathfrak a)$ is commensurable to (nil)-suspension.
	    \end{itemize}
	\end{theorem}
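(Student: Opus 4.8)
The statement is, as indicated, a repackaging of the classification of algebraic Anosov actions of abelian groups proved by T. Barbot and C. Maquera in \cite{Ba-Maq3}; the plan is therefore to recall their full classification and to check that its several cases collapse into the two alternatives above. Since commensurability permits passing to finite covers and to finite-index sublattices, I would first reduce, as far as these operations allow, to a purely Lie-theoretic question about the triple $(\lieg,\liek,\liea)$ together with the hyperbolicity data of Theorem \ref{theo44}; note in particular that reductivity of $\lieg$ is unaffected by finite covers and finite central quotients, so the dichotomy ``reductive / not reductive'' is a commensurability invariant and the two cases are genuinely exclusive.

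First I would fix a generic Anosov element $x$ in the interior of an open cone of Anosov elements (Lemma \ref{structurelemma}) and use Theorem \ref{theo44} to obtain the $\mathrm{ad}(x)$-invariant splitting $\lieg = \liek\oplus\liea\oplus\mathcal S\oplus\mathcal U$, with $\mathcal S$ (resp.\ $\mathcal U$) the sum of eigenspaces of negative (resp.\ positive) real part, the zero-real-part part being exactly the centralizer $\liek\oplus\liea$ of $\liea$ for generic $x$. Next I would bring in the solvable radical $\mathfrak r$ and the nilradical $\mathfrak n$ of $\lieg$. Since $[\lieg,\mathfrak r]\subseteq\mathfrak n$, the operator $\mathrm{ad}(x)$ acts trivially on $\mathfrak r/\mathfrak n$, so every eigendirection of $\mathrm{ad}(x)$ inside $\mathfrak r$ with nonzero real part already lies in $\mathfrak n$; equivalently $(\mathcal S\oplus\mathcal U)\cap\mathfrak r\subseteq\mathfrak n$, while the remaining directions of $\mathfrak r$ are neutral, hence contained in $\liea\oplus(\liek\cap\mathfrak r)$. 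If $\mathfrak r\not\subseteq\liek\oplus\liea$, this forces a nontrivial $\liea$-invariant nilpotent ideal carrying a lattice (invoking Mostow's structure theory for solvmanifolds and rationality of the nilradical) together with a complementary neutral torus direction; unwinding the suspension along that torus exhibits the action as a nil-suspension of a nilpotent-group action by automorphisms of a nilmanifold --- precisely the integrable-$E^+\oplus E^-$ situation recalled in the introduction. If instead $\mathfrak r\subseteq\liek\oplus\liea$, one deduces that $\lieg$ is reductive, $\lieg = \liez(\lieg)\oplus[\lieg,\lieg]$ with $[\lieg,\lieg]$ semisimple; after absorbing the nilradical of $\liek$ into the acting algebra (the modification of Definition \ref{def1}) and peeling off the central torus coming from $\liez(\lieg)$ (the central extension of Definition \ref{def2}), the residual action is a genuine Weyl chamber action on the semisimple factor in the sense of the previous section.

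I expect the main obstacle to be the bookkeeping in the non-reductive case: one must show that the ``slow'' part of the nilradical assembles into a $\liea$-invariant rational ideal (so that the corresponding subgroup carries a lattice), that the transverse directions organize into an honest torus over which the quotient fibres, and that the resulting fibration is a nil-suspension in the precise sense required rather than some twisted variant --- this is exactly where the Lie-theoretic machinery of \cite{Ba-Maq3} (rationality of the nilradical, rigidity of nilmanifold automorphisms, Mostow's theorem) does the real work. In practice I would invoke their classification verbatim for the hard direction and confine my own argument to checking that the coarse dichotomy stated here is all that is used in the sequel.
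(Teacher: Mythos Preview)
The paper does not prove this theorem at all: it is stated verbatim as ``a simpler version of C.~Maquera and T.~Barbot's classification theorem'' and cited from \cite{Ba-Maq3} without argument. Your final paragraph correctly anticipates this --- invoking their classification verbatim is precisely what the paper does --- so in that sense your proposal is aligned with the paper's treatment.

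The Lie-theoretic sketch you give (analysing the solvable radical and nilradical, using $[\lieg,\mathfrak r]\subseteq\mathfrak n$ to push the hyperbolic directions of $\mathfrak r$ into $\mathfrak n$, then splitting into the reductive and nil-suspension cases) goes well beyond what the paper offers and is a reasonable outline of how the Barbot--Maquera argument actually proceeds. It is not needed here, but it is not wrong either; your caveat that the rationality and lattice bookkeeping in the non-reductive case is where the real work lies is accurate. One small comment: your claim that the two alternatives are ``genuinely exclusive'' is slightly stronger than what the paper asserts --- the paper's phrasing (``either \dots\ either \dots'') does not claim mutual exclusivity, and indeed commensurability classes can in principle overlap in degenerate situations, so I would not insist on that point without checking \cite{Ba-Maq3} carefully.
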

	
	Thus, to prove Theorem \ref{TheoremB} we must show that the constructions \ref{def1} and \ref{def2} also have compatible generalized $k$-contact structures.

	    \begin{lemma}\label{modweyl}
	        Let $(G,K',\Gamma,\liea')$ be a modified Weyl chamber action. Then $\Gamma\backslash G\slash K'$ admits a generalized $k+l$ contact structure whose induced contact action coincides with the $\liea'$ action.
	    \end{lemma}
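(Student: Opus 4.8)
The plan is to reduce Lemma \ref{modweyl} to the already-established Theorem \ref{geralsemi} by exhibiting the modified Weyl chamber action as an object of the same type, only with a larger abelian acting subalgebra. Recall the setup of Definition \ref{def1}: we start with a Weyl chamber action $(G,K,\Gamma,\liea)$, write $\liek = \mathcal{T_e}\oplus\liek'$ with $\mathcal{T_e}$ the nilradical (an abelian ideal) and $\liek'$ the compact semisimple Levi part, pick $\liek'\subset\liek$ containing the Levi factor, choose a complement $\liea^\ast$ so that $\liek = \liek'\oplus\liea^\ast$, and set $\liea' = \liea\oplus\liea^\ast$ (so $l = \dim\liea^\ast$). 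First I would verify that $\liea'$ is still an abelian subalgebra of $\lieg$: since $\liea$ commutes with all of $\liek$ (it is the noncompact part of the centralizer of a Cartan subspace, hence centralizes $\liek$), and $\liea^\ast$ is abelian being a supplement to $\liek'$ inside the compact algebra whose bracket structure is controlled — here I would need that $\liea^\ast$ can be taken abelian, which holds because we may choose it inside a maximal torus of the compact group $K$ that contains the relevant directions, and $\liea^\ast$ lies in the normalizer of $\liek'$ while having trivial intersection with it. This places $(G,K',\Gamma,\liea')$ in the hypotheses of Theorem \ref{theo44}, so it is an algebraic Anosov action on $\Gamma\backslash G\slash K'$ of rank $k+l$.

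Next I would build the $1$-forms. The decomposition $\lieg = \liek'\oplus\liea'\oplus\mathcal S\oplus\mathcal U$ is $\ad(x)$-invariant for an Anosov element $x\in\liea$ (the $\mathcal S,\mathcal U$ being the same root-space sums as before, since enlarging $\liea$ by compact directions does not change the hyperbolic splitting of an element of the original $\liea$). As in the proof of Theorem \ref{geralsemi}, any $\lambda\in(\liea')^\ast$ extends by zero on $\liek'\oplus\mathcal S\oplus\mathcal U$ to a linear functional on $\lieg$, hence a left-invariant $1$-form on $G$; since $\liek'$ centralizes $\liea'$ up to the bracket relations needed, $\lambda$ is $K'$-invariant and descends to $\Gamma\backslash G\slash K'$. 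Conditions (1),(2),(3) in the list preceding Theorem \ref{geralsemi} — $K'$-invariance, $\bigcap_j\ker\eta_j = \liek'\oplus\mathcal S\oplus\mathcal U$, and $\liea'\oplus\liek'\subset\ker d\eta_j$ — are then automatic from the extension-by-zero construction together with $[\liek',\mathcal B]\cap\liea' = 0$ and $[\liea',\cdot\,]\cap\liea' = 0$. The remaining point is the nondegeneracy condition (4): $(d\eta_j^N)|_{\mathcal S\oplus\mathcal U}\neq 0$, where $N = \dim\mathcal S = \dim\mathcal U$. Here I would invoke exactly the computation in the proof of Theorem \ref{geralsemi}: the pairing $a\otimes b\mapsto -\lambda([a,b])$ on $\mathcal S\oplus\mathcal U$ is controlled by Kammeyer's multiplication table, and nondegeneracy is an open condition on $\lambda\in(\liea')^\ast$ cut out by $\lambda$ not vanishing on the finitely many bracket vectors $H^1_\alpha,\hat H^1_\alpha$. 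Choosing a basis $\eta_1,\dots,\eta_{k+l}$ of $(\liea')^\ast$ inside this open set yields the generalized $(k+l)$-contact structure, and by Lemma \ref{reebfields} the Reeb vector fields span $\liea'$ and generate precisely the $\liea'$-action.

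The main obstacle I anticipate is not the nondegeneracy — that is a direct rerun of the previous section — but rather the bookkeeping around the new directions $\liea^\ast\subset\liek$: one must check that extending $\lambda$ by zero still gives a $K'$-invariant form (the new directions of $\liea'$ lie inside the old $\liek$, so their brackets with $\liek'$ must be understood), and that $[\liea^\ast,\mathcal S\oplus\mathcal U]$ does not contribute a $\liea'$-component that would spoil $\liea'\subset\ker d\eta_j$. Since $\liea^\ast$ lies in the compact algebra $\liek$ and $\liek$ acts on the restricted root spaces preserving the grading by restricted roots, $[\liea^\ast,\lieg_\beta^0]\subset\lieg_\beta^0$, which has zero intersection with $\liea'$ for $\beta\neq 0$; and $[\liea^\ast,\liek']\subset\liek'$ by the normalizer condition, again disjoint from $\liea'$. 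So the needed containments hold. A secondary technical point is ensuring $\Gamma$ still acts freely on $G\slash K'$, but since $K'\subset K$ this is inherited, possibly after replacing $\Gamma$ by a finite-index subgroup, which is harmless for the statement. Assembling these observations, $(G,K',\Gamma,\liea')$ carries a left-invariant generalized $(k+l)$-contact structure whose contact action is the modified Weyl chamber action, which is what Lemma \ref{modweyl} asserts.
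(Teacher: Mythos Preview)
Your approach is genuinely different from the paper's and, modulo one imprecision, it works. The paper does \emph{not} rerun Theorem \ref{geralsemi} with the enlarged abelian piece $\liea'$. Instead it treats $\Gamma\backslash G/K' \to \Gamma\backslash G/K$ as a principal $\mathbb T^l$-bundle, builds a specific left-invariant connection from the projection $\lieg/\liek' \to \liea^\ast$, and checks just enough (namely that the lifted Reeb fields $\hat X_j$ lie in $\ker d\xi_i$, a strictly weaker condition than flatness) to feed the data into Lemma \ref{anterior}. The paper's route is more geometric and reuses the bundle machinery that is needed anyway for the subsequent central-extension lemma; your route is more direct and purely algebraic, avoiding Lemma \ref{anterior} and the connection altogether.

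The imprecision is in your nondegeneracy step. You write that the condition on $\lambda\in(\liea')^\ast$ is ``$\lambda$ not vanishing on the finitely many bracket vectors $H^1_\alpha,\hat H^1_\alpha$''. That was the condition in Theorem \ref{geralsemi}, where $\lambda\in\liea^\ast$ was extended by zero on all of $\liek$. Here $\lambda$ is extended by zero only on $\liek'$, so $d\lambda(A,B)=-\lambda([A,B])$ also sees the $\liea^\ast$-component of $[A,B]$; for instance $[X^0_\alpha,X^1_{-\alpha}]=H^0_\alpha\in\liek$ may project nontrivially to $\liea^\ast$, so the block-diagonal structure in the Kammeyer computation need not survive and the explicit product formula for $d\lambda^N$ can acquire extra terms. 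The fix is the one you already gesture at: nondegeneracy of $(d\lambda)|_{\mathcal S\oplus\mathcal U}$ is the nonvanishing of a polynomial (the Pfaffian) in $\lambda\in(\liea')^\ast$; by Theorem \ref{geralsemi} this polynomial is nonzero on the subspace $\liea^\ast\subset(\liea')^\ast$, hence not identically zero, hence its nonvanishing locus is Zariski-open and dense in $(\liea')^\ast$, and you can pick a basis there. With that correction your argument goes through; the remaining verifications you sketch (abelianness of $\liea'$ via $\liea^\ast\subset\mathcal T_e$, $K'$-invariance via $\liea'\subset N_\lieg(\liek')$, and $[\liea^\ast,\lieg^0_\beta]\subset\lieg^0_\beta$) are correct.
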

	    \begin{proof}
	        It is clear that the bundle $\pi:\Gamma\backslash G\slash K'\to \Gamma\backslash G\slash K$ is a principal torus bundle.  In general, however, it doesn't need to admits a flat connection. But, the additional (homogeneous) structure on the base space allow us to modify our previous construction (Proposition \ref{PBundle}) to our benefit. We denote by $\alpha_1,\dots,\alpha_k$ the (left invariant) 1-forms on $G\slash K$ that give us a generalized $k$-contact structure on $G\slash K$, which descends to a generalized $k$-contact structure on $\Gamma\backslash G\slash K$. We also denote by $X_1,\dots,X_k$ de Reeb vector fields. For a fixed connection, we denote by $\hat X_1,\dots,\hat X_k$ the lifted vector fields.
	        
	        Let's recall some details of the construction in Proposition \ref{PBundle}.
	        
	        We choose a basis of $\R^l$ and wrote our connection as a $\R^l = \liea^*$ valued 1-form $(\xi_1,\dots,\xi_l)$. The canonical vertical  vector fields $Y_1,\dots,Y_l$ satisfy $\xi_i(Y_j) = \delta_{ij}$. Afterwards, we showed that $Y_j \in ker (d\xi_i)$ for every $i,j$.
	        
	        The flatness of the connection was used to show that in fact every horizontal field is in the kernel of $d\xi_i$. But we need a weaker conclusion. In fact, we need only to show that the lifted vector fields $\hat X_j$ are in the kernel of $d\xi_i$. This will allow us to satisfy the hypothesis of Lemma \ref{anterior}.\\
	       
	        First let's fix some notations. As we have seem, we can decompose  $Lie(G) = \lieg$ as
	        $$\lieg = \liek'\oplus\liea^*\oplus\liea\oplus\mathcal S\oplus\mathcal U$$
	        
	        We denote by $E$ the distribution over $G\slash K$ obtained by left translating $\mathcal S\oplus\mathcal U$, and by $\hat E$  the distribution over $G\slash K'$ obtained in the same way. We notice that for any $j$, we have that $d\pi^*\alpha_j = \pi^*(d\alpha_j)$ is non degenerate on $\hat E$, for $d\pi$ induces an isomorphism between $\hat E$ and $E$.
	        
	        We also denote by $\mathcal I$ the distribution generated by $Y_1,\dots,Y_l,\hat X_1,\dots,\hat X_k$, that is, $I$ is the left translation of $\liea^*\oplus\liea$.\\

	        Now we construct a connection. We decompose $Lie(G) = \lieg$ as
	        $$\lieg = \liek'\oplus\liea^*\oplus\liea\oplus\mathcal S\oplus\mathcal U$$
	        notice that this splitting is $\liek'$ invariant (it is $\liea$ invariant and $\liea$ commutes with $\liek'$).
	        We consider the projection (using the above splitting)
	        $$\lieg\slash \liek' \to \liea^*$$
	        
	        Because the splitting is $\liek'$ invariant, the above projection defines a $\liea^*$ valued $1$-form on $G\slash K'$, and thus a connection of the principal bundle $G\slash K'\to  G\slash K$.
	        
	        By construction, the distribution $\hat E$ is horizontal, and thus $(\xi_i)_{|_{\hat E}} = 0$. 
	        
	        Because this connection is left-invariant, $d\xi_i$ can be completely understood by it's behavior on a single point. Let $X,Y\in \lieg\slash \liek'$, then
	        $$d\xi_i(X,Y) = -\xi_i([X,Y])$$
	        
	        The Anosov property of our action means that for any left invariant vector field $Z \in \hat E$ $[\hat X_j,Z]\subset \hat E$. This means that $d\xi_i(X_j,\;\cdot\;) = 0$ and thus $I \subset ker(d\xi_i)$.\\
	        
	        We are now ready to apply Lemma \ref{anterior}: We consider on $M = G\slash K'$ the 1-forms given by:
	        $$\eta_1 = \pi^*\alpha_1\;,\;\dots\;,\;\eta_k = \pi^*\alpha_k,$$ 
	        $$\eta_{k+1} = \xi_1 + \pi^*\alpha_1 \;,\;\dots\;,\;\eta_{k+l} = \xi_l + \pi^*\alpha_1$$
	        and the splitting
	        $$TM = I\oplus \hat E$$
	        
	        It is clear that $E = \cap_{j=1}^{k+l} ker(\eta_j)$ and we have shown that for $j\leq k$ we have $ker (d\eta_j) = I$. We also have shown that for $j\geq k+1$ we have $I\subset \ker (d\eta_j)$.
	       
	       As every 1-form considered is left invariant, this construction (and the lemma's conclusion) descends to our desired generalized $k+l$-contact structure on $\Gamma\backslash G\slash K'$. That the contact action coincides with the $\liea'$ action should be clear.

	    \end{proof}

	\begin{lemma}
	    Let  $(G,K,\Gamma,\mathfrak a)$ be a (Modified) Weyl chamber action and $(G',K',\Gamma',\mathfrak a')$ be a central extension. Then $\Gamma'\backslash G'\slash K'\to \Gamma\backslash G\slash K$ is a principal torus bundle that admits a flat connection.
	\end{lemma}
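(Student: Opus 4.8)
The plan is to verify directly that the central exact sequence $1\to H_0\to G'\xrightarrow{p} G\to 1$ produces the claimed principal bundle structure, and then to build a flat connection from the homogeneous data. First I would identify the standard fiber: since $p(\Gamma')=\Gamma$, $p(K')=K$ and $\ker p=H_0$ is central, the map $p$ descends to a surjection $\bar p\colon \Gamma'\backslash G'\slash K'\to\Gamma\backslash G\slash K$ whose fibers are orbits of $H_0$. Because $H_0$ is central (hence abelian and normal) with $\lieh_0\subset\liea'$, the $H_0$-action on $\Gamma'\backslash G'\slash K'$ is free and proper modulo the discrete stabilizer $\Gamma'\cap H_0$, so the fiber is the torus (or compact quotient of $H_0$ by the relevant lattice) $T^l=H_0/(\Gamma'\cap H_0)$; I would spell out that centrality is exactly what makes this an honest \emph{principal} $T^l$-bundle rather than merely a fiber bundle, and that local triviality follows from local triviality of $G'\to G$ as a principal $H_0$-bundle together with the quotients by $K'$ and $\Gamma'$ being compatible.

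Next I would construct the flat connection. The key observation is that $\lieh_0\subset\liea'$ and $\liea'$ is abelian, and moreover the splitting $\lieg'=\liek'\oplus\liea'\oplus\mathcal S\oplus\mathcal U$ used in Definition \ref{def2} (pulled back from the reductive structure on $G$) is $\liek'$-invariant and $\liea'$-invariant. I would pick a complement $\liea''$ of $\lieh_0$ inside $\liea'$ and declare the horizontal distribution at the identity coset to be the left-translate of $\liek'\slash\liek'$-quotient of $\liea''\oplus\mathcal S\oplus\mathcal U$; since $\lieh_0$ is central this left-invariant distribution is right-$K'$-invariant and descends to $\Gamma'\backslash G'\slash K'$, giving an $\lieh_0$-valued connection $1$-form which is just the projection $\lieg'\slash\liek'\to\lieh_0$ along this splitting. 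Flatness (involutivity of the horizontal distribution) reduces, by left-invariance as in the computations of Lemma \ref{PBundle} and Lemma \ref{modweyl}, to checking that $[\liea''\oplus\mathcal S\oplus\mathcal U,\ \liea''\oplus\mathcal S\oplus\mathcal U]$ has zero component in $\lieh_0$; this follows because $\lieh_0$ is central in $\lieg'$, so in fact $[\lieg',\lieg']\cap\lieh_0=\{0\}$, hence the curvature vanishes identically.

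The main obstacle I expect is purely bookkeeping about discreteness and compactness of the fiber: one must check that $\Gamma'\cap H_0$ is a lattice in $H_0$ (so the fiber is a genuine torus rather than a cylinder), which uses that $\Gamma'$ is a uniform lattice in $G'$ and $\Gamma=p(\Gamma')$ is uniform in $G$; a short argument with the exact sequence and cocompactness handles this. A secondary point to be careful about is that the principal bundle structure should be stated for the $H_0$-action descended to the double quotient, and one must confirm $\Gamma'$ acts freely on $G'\slash K'$ (inherited from the hypotheses on the constituent actions) so that the total space is a manifold. Once these are dispatched, the flatness is essentially automatic from centrality, so this lemma is considerably softer than Lemma \ref{modweyl}; its purpose is simply to feed $\Gamma'\backslash G'\slash K'\to\Gamma\backslash G\slash K$ into Lemma \ref{PBundle} to obtain the generalized $(k+l)$-contact structure upstairs, completing the proof of Theorem \ref{TheoremB}.
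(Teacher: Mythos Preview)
Your overall strategy---verify the principal torus bundle structure, then build a left-invariant connection by projecting $\lieg'\slash\liek'\to\lieh_0$ along a chosen complement and check its curvature vanishes---is exactly the paper's approach, and your discussion of the bundle structure is in fact more thorough than the paper's (which takes it for granted).

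However, there is a genuine gap in your flatness argument. You write that ``$\lieh_0$ is central in $\lieg'$, so in fact $[\lieg',\lieg']\cap\lieh_0=\{0\}$.'' This implication is false in general: in the Heisenberg algebra the center coincides with the derived subalgebra. Centrality of $\lieh_0$ tells you only that $[\lieh_0,\lieg']=0$; it says nothing about whether brackets of elements \emph{outside} $\lieh_0$ can land inside $\lieh_0$. Concretely, if your lifts of $\mathcal S,\mathcal U$ to $\lieg'$ are taken via an arbitrary vector-space section of $p$, then $[\mathcal S,\mathcal U]$ may well have a nonzero $\lieh_0$-component, and your horizontal distribution need not be involutive.

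The missing ingredient is precisely what the paper invokes: since $\lieg=\lieg'/\lieh_0$ is \emph{semisimple}, Whitehead's second lemma (equivalently, $H^2(\lieg,\lieh_0)=0$ for the trivial module) forces the Lie-algebra exact sequence $0\to\lieh_0\to\lieg'\to\lieg\to 0$ to split, so one may identify $\lieg'=\lieg\oplus\lieh_0$ as a direct sum of Lie algebras. With this splitting, $[\lieg',\lieg']=[\lieg,\lieg]\subset\lieg$, and now $[\lieg',\lieg']\cap\lieh_0=\{0\}$ follows. Once you insert this step---choosing $\liea''$, $\mathcal S$, $\mathcal U$ to lie in the Lie-subalgebra copy of $\lieg$ furnished by the splitting---your curvature computation goes through verbatim.
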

	
	\begin{proof}
	    First we notice that because $\lieg$ is semisimple, the induced exact sequence
	    $$0\to \lieh_0\to \lieg'\stackrel{p}{\to} \lieg \to 0$$
	    splits. Moreover, as the extension is central, we can identify \begin{align}\label{ident}
	        \lieg' = \lieg\oplus\lieh_0.
	  \end{align} 
	  
	  Now, we do as we've done in Lemma \ref{modweyl}, that is, we induce a left invariant connection on the bundle $G'\slash K'\to G\slash K$ by projecting left invariant vector fields $\lieg'\slash \liek'\to \lieh_0\slash \liek'$. The identification \ref{ident} allow us to compute the curvature of this connection and to check that it is flat.
	\end{proof}
	
	\begin{corollary}[Theorem \ref{TheoremB}]
Let $(G,K,\Gamma,\mathfrak a)$ be an algebraic Anosov action which is not (commensurable to) a suspension. Then there exists an generalized $k$-contact structure on $\Gamma\backslash G\slash K$ such that the induced contact action is Anosov and it coincides with the algebraic action.
	\end{corollary}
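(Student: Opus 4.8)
The plan is to combine the Barbot--Maquera classification with the three constructions already established: Theorem \ref{geralsemi} for honest Weyl chamber actions, Lemma \ref{modweyl} for modified Weyl chamber actions, and Lemma \ref{PBundle} (fed with the flat connection produced in the lemma immediately preceding this corollary) for central extensions. Since the given action is not commensurable to a suspension, the classification theorem places it in the reductive case, so it is commensurable to a central extension $(G',K',\Gamma',\mathfrak a')$ of a (possibly modified) Weyl chamber action. It therefore suffices (i) to build a compatible generalized $k$-contact structure on such a model, and (ii) to observe that possessing a compatible generalized $k$-contact structure is invariant under commensurability.

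For (i) I would stack the three constructions. Starting from the underlying semisimple Weyl chamber action on some $\Gamma\backslash G\slash K_0$, Theorem \ref{geralsemi} furnishes a \emph{left-invariant} generalized $k$-contact structure whose contact action is exactly the Weyl chamber action. If the model involves a modification, Lemma \ref{modweyl} promotes this to a (still left-invariant) generalized $(k+l_1)$-contact structure on $\Gamma\backslash G\slash K_1$, $K_1\subset K_0$, whose contact action is the $\mathfrak a_1$-action. Finally, for the central extension, the preceding lemma exhibits $\Gamma'\backslash G'\slash K'\to\Gamma\backslash G\slash K_1$ as a principal torus bundle admitting a flat (left-invariant) connection, so Lemma \ref{PBundle} yields a left-invariant generalized $(k+l_1+l_2)$-contact structure on $\Gamma'\backslash G'\slash K'$; by the remark following that lemma, its contact action is the canonical lift of the $\mathfrak a_1$-action together with the fiber $\mathbb{T}^{l_2}$-action, i.e.\ precisely the $\mathfrak a'$-action.

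For (ii) and the Anosov conclusion: the structures built above are left-invariant, and a left-invariant generalized $k$-contact structure (together with its contact action, whose Reeb fields are then left-invariant) depends only on the underlying Lie-algebraic data, so it can be pulled back along, and pushed down from, any finite covering of homogeneous spaces and transported along any algebraic conjugacy; combined with Lemma \ref{modweyl} and the central-extension lemma, this shows the property is stable under the elementary moves generating commensurability, hence is a commensurability invariant. Therefore the given action $(G,K,\Gamma,\mathfrak a)$ acquires a compatible generalized $k$-contact structure. Since that action is Anosov (Theorem \ref{theo44}) and its contact action preserves the volume form $\alpha_1\wedge\cdots\wedge\alpha_k\wedge d\alpha_j^n$, it is topologically transitive (Corollary \ref{tra}), so by Lemma \ref{repa} a Zariski-open (hence dense) set of linear reparameterizations keeps it generalized $k$-contact; using that the Anosov elements form a nonempty open cone (Lemma \ref{structurelemma}), one such reparameterization puts a Reeb vector field inside it (see Lemma \ref{ASOA}), making the contact action Anosov and still equal to the algebraic action. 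The main obstacle I expect is the bookkeeping in (ii): checking that \emph{compatibility} — contact action $=$ algebraic action, not merely the existence of \emph{some} $k$-contact structure — survives the covering/conjugacy chain and each successive bundle construction, and confirming that the reductive case of the classification really decomposes as a composition of exactly the modified-Weyl-chamber and central-extension moves treated here.
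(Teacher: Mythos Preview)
Your proposal is correct and follows essentially the same route as the paper: invoke the Barbot--Maquera classification to reduce to a central extension of a (modified) Weyl chamber action, then stack Theorem \ref{geralsemi}, Lemma \ref{modweyl}, and the flat-connection lemma together with Lemma \ref{PBundle}. The paper in fact presents the corollary with no further argument after those lemmas, so your step (ii) on commensurability invariance and your explicit reduction to an Anosov Reeb field via Lemmas \ref{repa}--\ref{ASOA} are details you supply that the paper leaves implicit; the ``bookkeeping'' worry you flag is real but, as you note, is handled by the left-invariance of all the data involved.
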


\section{Final Remarks}

We have proved, using the classification of algebraic Anosov actions given by T. Barbot and C. Maquera \cite{Ba-Maq3}, that every algebraic Anosov action of $\R^k$, which is not a suspension, does admits a compatible generalized $k$-contact structure. This shows that it is not unreasonable to suppose that algebraic Anosov actions can be taken as local models of contact Anosov actions. This, along with the work of Y. Benoist, P. Foulon and F. Labourie's \cite{BFL},  motivates the work at \cite{Almeida}, where it is shown that a contact Anosov action, $\phi:\R^k\times M\to M$, with smooth invariant bundles is (up to conjugation) affine, that is, there exists a Lie group $G$, a closed subgroup $H$ and a discrete subgroup $\Gamma$ such that $M = \Gamma\backslash G\slash H$ and the action is given by the right multiplication of a subgroup $A$ in the normalizer of $H$\footnote{Notice that an algebraic action is an affine action where the subgroup $H$ is compact, and the subgroup $\Gamma$ is an uniform lattice.}.

Keeping in mind the Kalinin-Spatzier \cite{kal} conjecture that states that Anosov action of $\R^k$, for $k\geq2$, are algebraic, we are lead to the following question:
\begin{itemize}
    \item[] Consider an Anosov action  of $\R^k$, for $k\geq2$ with smooth invariant bundles. If it is not a suspension, does it does admits a compatible generalized $k$-contact structure?
\end{itemize}

To give a positive answer for this question is to give a nice step towards showing Kalinin-Spatzier conjecture. However, to answer negatively, is to show that, in the smooth invariant bundles context, the Kalinin-Spatzier conjecture is false.

\bibliographystyle{amsplain}


%
%
%
%
\end{document}